\pgfplotsset{compat=1.17}
    \newtheorem{main}{Theorem}
    \newtheorem{definition}{Definition}
    \newtheorem{lemma}[definition]{Lemma}
    \newtheorem{theorem}[definition]{Theorem}
    \newtheorem{proposition}[definition]{Proposition}
    \newtheorem{corollary}[definition]{Corollary}
    \theoremstyle{remark}
    \newtheorem{example}[definition]{Example}
    \newtheorem{remark}[definition]{Remark}
    \def\thm@space@setup{\thm@preskip=0.5cm   \thm@postskip=0.5cm}
\newcommand{\N}{\mathbb{N}}
\newcommand{\Z}{\mathbb{Z}}
\newcommand{\R}{\mathbb{R}}
\newcommand{\C}{\mathbb{C}}
\newcommand{\E}{\mathrm{e}}
\newcommand{\I}{\mathrm{i}}
\newcommand{\esper}{\mathbb{E}}
\newcommand{\var}{\mathrm{var}}
\newcommand{\proba}{\mathbb{P}}
\newcommand{\eps}{\varepsilon}
\newcommand{\lle}{\left[\!\left[} 
\newcommand{\rre}{\right]\!\right]}
\newcommand{\sym}{\mathfrak{S}}
\newcommand{\ym}{\mathfrak{Y}}
\newcommand{\DD}[1]{\,d\hspace{-0.3mm}{#1}}
\newcommand{\scal}[2]{\left\langle #1\vphantom{#2}\,\right |\left.#2 \vphantom{#1}\right\rangle}
\newcommand{\comment}[1]{}
\renewcommand{\Re}{\mathrm{Re}}
\newcommand{\dkol}{d_{\mathrm{Kol}}}
\newcommand{\ST}{\mathrm{ST}}
\newcommand{\SST}{\mathrm{SST}}
\newcommand{\ch}{\mathrm{ch}}
\newcommand{\obs}{\mathscr{O}}
\newcommand{\expn}{^{(n)}}
\newcommand{\majT}{\mathrm{maj}(T)}
\newcommand{\majTn}{\mathrm{maj}(T\expn)}
\newcommand{\Sym}{\mathrm{Sym}}
\newcommand{\tildeX}{\widetilde{X}}
\setlist[enumerate]{itemsep=10pt,topsep=10pt}
\setlist[itemize]{itemsep=5pt,topsep=5pt}
\title[Asymptotics of the major index of a random standard tableau]{Asymptotics of the major index\\ of a random standard tableau}
\author{Pierre-Loïc Méliot and Ashkan Nikeghbali}
\date{\today}
\begin{document}

\begin{abstract} In this article, we establish the mod-$\phi$ convergence of the major index of a uniform random standard tableau whose shape converges in the Thoma simplex. This implies various probabilistic estimates, in particular speed of convergence estimates of Berry--Esseen type, and strong large deviation principles.
\end{abstract}

\maketitle
\bigskip

\tableofcontents

\clearpage

\section{Major index of a random standard tableau}
\subsection{Integer partitions and standard tableaux} If $\lambda = (\lambda_1 \geq \lambda_2 \geq \cdots \geq \lambda_{\ell(\lambda)})$ is a non-increasing sequence of positive integers, we recall that it is called \emph{integer partition} of the integer $n = |\lambda| = \sum_{i=1}^{\ell(\lambda)} \lambda_i$, and that it is represented by a \emph{Young diagram}, which is the array of $n$ boxes with $\lambda_1$ boxes on the first row, $\lambda_2$ boxes on the second row, and so on. For instance, the integer partition $\lambda=(4,2,2,1)$ has size $|\lambda|=9$, and it is represented by the Young diagram:
$$\yng(1,2,2,4)\vspace{2mm}$$
We denote $\ym(n)$ the set of integer partitions with size $n$, and $\ym=\bigsqcup_{n \in \N} \ym(n)$ the set of all integer partitions. Given $\lambda \in \ym(n)$, a \emph{standard tableau} with shape $\lambda$ is a numbering of the cells of the Young diagram of $\lambda$ by the integers in $\lle 1,n\rre$ which is bijective and strictly increasing along the rows and columns. For instance,
$$\young(8,47,35,1269)\vspace{2mm}$$
is a standard tableau with shape $(4,2,2,1)$. The set of standard tableaux with shape $\lambda$ will be denoted $\ST(\lambda)$, and the cardinality of this set is given by the Frame--Robinson--Thrall \emph{hook length formula} \cite{FRT54}. Given a cell $\oblong$ of a Young diagram $\lambda$, its hook length $h(\oblong)$
 is the number of cells of the hook which is included in the Young diagram $\lambda$ and which contains $\oblong$, the cells above $\oblong$ and the cells on the right of $\oblong$. For instance, the integer partition $\lambda=(4,2,2,1)$ yields the following hook lengths:
$$\young(1,31,42,7521)\vspace{2mm}$$
Then, for any integer partition $\lambda \in \ym(n),$
$$|\ST(\lambda)| = \frac{n!}{\prod_{\oblong \in \lambda} h(\oblong)};$$
see for instance \cite[Section I.4, Example 3]{Mac95}. In this article, we shall be interested in the distribution of a certain statistics $X : \ST(\lambda) \to \N$, the set $\ST(\lambda)$ being endowed with the uniform law. Before presenting this statistics, let us associate to the cells of a Young diagram $\lambda$ another integer: if the cell $\oblong$ is in $i$-th row and the $j$-th column of the Young diagram $\lambda$, its \emph{content} is $c(\oblong) = j-i$. For instance, $\lambda=(4,2,2,1)$ admits the following contents:
\newcommand{\mone}{\begin{tikzpicture}[scale=1,baseline=-1.3mm]
\draw [thick] (0,0) -- (0.15,0);
\end{tikzpicture} 1}
\newcommand{\mtwo}{\begin{tikzpicture}[scale=1,baseline=-1.3mm]
\draw [thick] (0,0) -- (0.15,0);
\end{tikzpicture} 2}
\newcommand{\mthree}{\begin{tikzpicture}[scale=1,baseline=-1.3mm]
\draw [thick] (0,0) -- (0.15,0);
\end{tikzpicture} 3}
$$\young({\mthree},{\mtwo}{\mone},{\mone}0,0123)$$
The contents are involved in numerous combinatorial formul{\ae} in the theory of integer partitions. For instance, the cardinality of the set $\SST(\lambda,m)$ of semistandard tableaux with shape $\lambda$ and entries in $\lle 1,m\rre$ is given by the product $\prod_{\oblong \in \lambda} \frac{m+c(\oblong)}{h(\oblong)}$; see \cite[Corollary 7.21.4]{Stan99}. The contents of the cells of the Young diagrams shall also play an important role in our work.
\medskip

\subsection{Major index and Schur functions}\label{sub:schur_functions}
A \emph{descent} of a standard tableau of size $n$ is an integer $i \in \lle 1,n-1\rre$ such that $i+1$ appears in a row strictly above the row containing $i$. The \emph{major index} of a standard tableau $T$ is the sum of its descents:
$$\majT = \sum_{i \in \mathrm{Desc}(T)} i.$$
For instance, if $\lambda=(4,2,2,1)$ and $T$ is the standard tableau given as an example in the previous paragraph,
then the set of descents of $T$ is $\{2,3,6,7\}$, and the major index is $2+3+6+7 = 18$. The definitions of descent and major index for a standard tableau are closely related to the analogue definitions for a permutation. Given $\sigma \in \sym(n)$, a descent of the permutation $\sigma$ is an index $i \in \lle 1,n-1\rre$ such that $\sigma(i)>\sigma(i+1)$. For instance, the descent set of the permutation $\sigma=592138647$ is $\{2,3,6,7\}$. The \emph{Robinson--Schensted--Knuth algorithm} yields a bijection between $\sym(n)$ and the set $\bigsqcup_{\lambda \in \ym(n)} \ST(\lambda)\times \ST(\lambda)$ of pairs of standard tableaux with the same shape. For instance, $592138647 \in \sym(9)$ corresponds to the pair of standard tableaux
$$P = \young(9,58,26,1347)\quad;\quad Q = \young(8,47,35,1269).$$
This bijection preserves the set of descents: if $\mathrm{RSK}(\sigma)=(P,Q)$, then $\mathrm{Desc}(\sigma) = \mathrm{Desc}(Q)$; see for instance \cite[Theorem 10.117]{Loehr11}.\medskip

The purpose of this article is to study the distribution of $\majT$ when $T$ is taken uniformly at random in the set $\ST(\lambda)$ of standard tableaux with shape $\lambda$, and $\lambda=\lambda\expn$ is an integer partition with size $n$, $n$ going to infinity. Equivalently, this amounts to consider the distribution of $\mathrm{maj}(\sigma)$ when $\sigma$ is a permutation taken uniformly at random in a RSK class of growing size. The case of a uniform random permutation in $\sym(n)$ can essentially be recovered as a particular case of our results; see the discussion after the statement of Theorems \ref{main:log_laplace} and \ref{main:large_deviations}. The following result due to Billey--Konvalinka--Swanson \cite{BKS20} ensures that under mild hypotheses and after appropriate scaling, this distribution of $\majT$ with $T \sim \mathcal{U}(\ST(\lambda))$ is asymptotically normal:

\begin{theorem}[Billey--Konvalinka--Swanson]\label{thm:BKS}
We denote $\lambda'$ the conjugate of an integer partition $\lambda$, which is obtained by symmetrizing the Young diagram with respect to the diagonal. Consider a sequence of integer partitions $(\lambda\expn)_{n \geq 1}$ such that $$|\lambda\expn|=n\quad;\quad n-\lambda\expn_1 \to +\infty \quad;\quad n-\lambda^{(n)'}_1 \to +\infty.$$
 Then, with $T\expn$ taken uniformly at random in $\ST(\lambda\expn)$,
$$\frac{\majTn - \esper[\majTn]}{\sqrt{\var(\majTn)}} \rightharpoonup_{n \to +\infty} \mathcal{N}(0,1).$$
\end{theorem}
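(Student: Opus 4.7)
The plan is to compute the moment generating function of $\mathrm{maj}(T\expn)$ exactly through Stanley's $q$-analogue of the hook length formula,
\begin{equation*}
\sum_{T \in \ST(\lambda)} q^{\mathrm{maj}(T)} = \frac{q^{b(\lambda)} \prod_{i=1}^n (1-q^i)}{\prod_{\oblong \in \lambda}(1-q^{h(\oblong)})},
\end{equation*}
with $b(\lambda) = \sum_{i \geq 1}(i-1)\lambda_i$ (see \cite[Corollary 7.21.5]{Stan99}). Dividing this by the Frame--Robinson--Thrall formula produces a closed product expression for the Laplace transform $\esper[\E^{s\,\mathrm{maj}(T\expn)}]$, and after taking the logarithm the problem splits into a sum of elementary contributions $\log[m]_{\E^s}$ indexed on one side by $m \in \lle 1,n\rre$ and on the other by the hook lengths of $\lambda\expn$.

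The next step is to extract the cumulants of $\mathrm{maj}(T\expn)$ from this decomposition. The classical Bernoulli expansion
\begin{equation*}
\log \frac{\E^x - 1}{x} = \frac{x}{2} + \sum_{k \geq 1} \frac{B_{2k}}{(2k)\,(2k)!}\, x^{2k}
\end{equation*}
gives, after a short calculation, $[s^r] \log[m]_{\E^s} = d_r\,(m^r - 1)$ for explicit universal constants $d_r$ expressed in terms of Bernoulli numbers (and vanishing when $r$ is odd and at least $3$). Moreover the $q$-polynomial $q^{-b(\lambda)} \sum_T q^{\mathrm{maj}(T)}$ is known to be palindromic, so the law of $\mathrm{maj}(T\expn)$ is symmetric about its mean, all odd cumulants of order $\geq 3$ vanish, and one obtains for every even $r \geq 2$ the explicit formula
\begin{equation*}
\kappa_r\bigl(\mathrm{maj}(T\expn)\bigr) = r!\, d_r \left( \sum_{i=1}^n i^r - \sum_{\oblong \in \lambda\expn} h(\oblong)^r \right).
\end{equation*}

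The CLT will then follow from two complementary estimates on these cumulants. First, the trivial bounds $h(\oblong) \leq n$ and $|\lambda\expn| = n$ yield $|\kappa_r(\mathrm{maj}(T\expn))| = O(n^{r+1})$ for every $r \geq 2$; consequently, by Lévy's continuity theorem it suffices to prove the lower bound $\var(\mathrm{maj}(T\expn)) \asymp n^3$, which forces $\kappa_r / \var^{r/2} \to 0$ for every even $r \geq 4$ and yields the desired convergence to the standard Gaussian. The main obstacle is precisely this lower bound on the variance: when $\lambda\expn$ degenerates to a single row or column, $\mathrm{maj}(T\expn)$ becomes deterministic, and one has to quantitatively exploit both hypotheses $n - \lambda\expn_1 \to \infty$ and $n - \lambda_1^{(n)'} \to \infty$ to show that $\sum_{\oblong} h(\oblong)^2$ stays bounded away from $\sum_{i=1}^n i^2 \sim n^3/3$ by an amount of order $n^3$. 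Such a bound can be obtained by a careful combinatorial analysis of arm and leg lengths: isolating the first row and using that at least $n - \lambda\expn_1$ additional cells lie strictly below it forces the hooks to spread away from the maximal value $n$, and the symmetric argument applied to the columns then closes the case.
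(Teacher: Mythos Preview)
Your overall strategy matches the paper's sketch of the Billey--Konvalinka--Swanson argument: pass through the $q$-hook length formula, extract the cumulant identity
\[
\kappa^{(r)}(\mathrm{maj}(T\expn))=\frac{B_r}{r}\left(\sum_{i=1}^n i^r-\sum_{\oblong\in\lambda\expn}h(\oblong)^r\right),
\]
and then show that $\kappa^{(r)}/\kappa^{(2)\,r/2}\to 0$ for every even $r\ge 4$. The gap is in the last step. Your plan hinges on the claim $\var(\mathrm{maj}(T\expn))\asymp n^3$, and that claim is \emph{false} under the hypotheses of the theorem. Take the hook $\lambda\expn=(n-f,1^{f})$ with $f=f(n)\to\infty$ and $f=o(n)$. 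Then $n-\lambda_1\expn=f\to\infty$ and $n-\lambda_1^{(n)'}=n-f-1\to\infty$, so the hypotheses hold. The hook multiset is $\{n\}\sqcup\{1,\dots,n-f-1\}\sqcup\{1,\dots,f\}$, and a direct computation gives
\[
\sum_{i=1}^n i^2-\sum_{\oblong}h(\oblong)^2=\sum_{i=n-f}^{n-1}i^2-\sum_{j=1}^f j^2 \sim fn^2,
\]
so $\var(\mathrm{maj}(T\expn))\sim \tfrac{1}{12}fn^2$. With $f=\log n$ this is $n^2\log n$, not $n^3$. Your crude bound $|\kappa^{(r)}|=O(n^{r+1})$ then yields only $\kappa^{(4)}/\kappa^{(2)\,2}=O(n^5/(n^2\log n)^2)=O(n/\log^2 n)\to\infty$, so the argument collapses.

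The fix is not to improve the variance lower bound (it cannot be improved) but to sharpen the numerator: one has to show directly that $S_r:=\sum i^r-\sum h(\oblong)^r$ is controlled by $S_2$, roughly $|S_r|\le C_r\,n^{r-2}S_2$ up to constants, so that $\kappa^{(r)}/\kappa^{(2)\,r/2}=O(S_2^{\,1-r/2})\to 0$ as soon as $S_2\to\infty$. In the hook example this works: the same telescoping gives $S_r\sim f n^r$, hence $\kappa^{(r)}/\kappa^{(2)\,r/2}\sim f^{\,1-r/2}\to 0$. In general the comparison $S_r$ versus $S_2$ requires an argument of the Chen--Wang--Wang / Hwang--Zacharovas type (cited in the paper as \cite{CWW08,HZ15}), exploiting that both $\{1,\dots,n\}$ and $\{h(\oblong)\}$ are multisets of $n$ integers in $\lle 1,n\rre$; the combinatorial spreading you describe in your last paragraph is relevant, but it must be fed into a bound on $S_r/S_2$, not into a standalone estimate $S_2\asymp n^3$.
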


\noindent The conditions $ n-\lambda\expn_1 \to +\infty $ and $n-\lambda^{(n)'}_1 \to +\infty$ are actually necessary: otherwise, the limiting distribution exists but is not Gaussian, see \cite[Theorem 1.3]{BKS20}. We shall see in the sequel that if $|\lambda|=n$ and $T \sim \mathcal{U}(\ST(\lambda))$, then 
$$ \sum_{i=1}^{\ell(\lambda)} (i-1)\lambda_i \leq \majT \leq \binom{n}{2} - \sum_{i=1}^{\ell(\lambda)} \binom{\lambda_i}{2}$$
and the variance of $\majT$ is of order $O(n^3)$. Thus, in most cases, the values of $\majT$ are of order $O(n^2)$, and their fluctuations around the mean are of order $O(n^{\frac{3}{2}})$ and asymptotically normal. This raises the question of the large deviations of $\majT$ of order $O(n^2)$: given $y > 0$, what are the asymptotics of
$$\proba[\majTn - \esper[\majTn] \geq yn^2]\,?$$
We shall explain in this article how to estimate these probabilities if one knows the limit shape of the partitions $\lambda\expn$. On the other hand, Theorem \ref{thm:BKS} ensures that
\begin{align*}
&\dkol\!\left(\frac{\majTn - \esper[\majTn]}{\sqrt{\var(\majTn)}},\,\mathcal{N}(0,1)\right) \\
&= \sup_{s \in \R} \left|\proba\!\left[\majTn - \esper[\majTn] \leq s\sqrt{\var(\majTn)} \right] - \int_{-\infty}^s \E^{-\frac{x^2}{2}}\,\frac{\!\DD{x}}{\sqrt{2\pi}}\right|\end{align*}
goes to $0$ when $n$ goes to infinity, but it does not give the speed of convergence, related to the quality of the Gaussian approximation. One of our main results is a uniform upper bound for this Kolmogorov distance, which is of order $O(n^{-\frac{1}{2}})$.\medskip

The major index of standard tableaux is related to the Schur functions and to the so-called \emph{$q$-hook length formula}. For any family of variables $(x_1,\ldots,x_N)$ with $N \geq \ell(\lambda)$, set 
$$s_\lambda(x_1,\ldots,x_N) = \frac{\det((x_i)^{\lambda_j+N-j})_{1\leq i,j\leq N}}{\det((x_i)^{N-j})_{1\leq i,j\leq N}}$$
with by convention $\lambda_j=0$ if $j>\ell(\lambda)$. 
The specialisation $x_{N+1}=0$ from $\R[x_1,\ldots,x_{N+1}]$ to $\R[x_1,\ldots,x_N]$ stabilises these \emph{Schur polynomials}:
$$\forall N \geq \ell(\lambda),\,\,\,s_\lambda(x_1,\ldots,x_N,0) = s_\lambda(x_1,\ldots,x_N).$$
In the projective limit in the category of graded algebras $\R[X] = \varprojlim_{N \to \infty}\R[x_1,\ldots,x_N]$, there exists a unique element $s_\lambda(X)$ whose projections are the symmetric polynomials defined above. This is the \emph{Schur function} $s_\lambda$, see for instance \cite[Section I.3]{Mac95}. An important result in the theory of these symmetric functions is the Stanley $q$-hook length formula \cite[Corollary 7.21.3]{Stan99}: if $|q|<1$, then
$$s_\lambda(1,q,q^2,\ldots,q^n,\ldots) = q^{b(\lambda)}\,\prod_{\oblong \in \lambda} \left(\frac{1}{1-q^{h(\oblong)}}\right),$$
with $b(\lambda)=\sum_{i=1}^{\ell(\lambda)}(i-1)\lambda_i$. Moreover, up  to a combinatorial factor, the principal specialisation $s_\lambda(1,q,q^2,\ldots)$ is the generating series of the major indices of the standard tableaux with shape $\lambda$:
$$s_\lambda(1,q,q^2,\ldots) = \left(\prod_{i=1}^{|\lambda|}\frac{1}{1-q^i}\right)\,\sum_{T \in \ST(\lambda)}q^{\majT},$$
see \cite[Proposition 7.19.11]{Stan99}.
\medskip

\subsection{Cumulants of the major index}
The starting point of the argument of \cite{BKS20} is the following important remark, which is based on earlier
works by Chen--Wang--Wang \cite{CWW08} and by Hwang--Zachavoras \cite{HZ15}. Given $T \in \ST(\lambda)$, set $X(T) = \majT - b(\lambda)$, and consider the generating series of this statistics:
$$\sum_{T \in \ST(\lambda)} q^{X(T)} = \frac{\prod_{i=1}^{|\lambda|}(1-q^i)}{\prod_{\oblong \in \lambda}(1-q^{h(\oblong)})}.$$
We obtain a product of ratios of $q$-integers $[a]_q = \frac{1-q^a}{1-q}$. This leads to simple formulas for the \emph{cumulants} of the random variable $X(T)$ with $T\sim \mathcal{U}(\ST(\lambda))$. Given a random variable whose Laplace transform $\esper[\E^{zX}]$ is convergent on a disk $D_{(0,R)} = \{z \in \C\,\,|\,\,|z|<R\}$ with $R>0$, we recall that the cumulants of $X$ are the coefficients $\kappa^{(r)}(X)$ of the log-Laplace transform:
$$\log \esper[\E^{zX}] = \sum_{r=1}^\infty \frac{\kappa^{(r)}(X)}{r!}\,z^r\quad\text{in a neighborhood of }0.$$
The cumulants of $X$ are related to its moments by a Möbius inversion formula with respect to the lattice of set partitions:
$$\kappa^{(r)}(X) = \sum_{\pi \in \mathfrak{P}(r)} (-1)^{\ell(\pi)-1}\,(\ell(\pi)-1)!\, \left(\prod_{j=1}^{\ell(\pi)} \esper[X^{|\pi_j|}]\right),$$
the sum running over the set $\mathfrak{P}(r)$ of set partitions $\pi_1 \sqcup \pi_2 \sqcup \cdots \sqcup \pi_{\ell(\pi)}$ of the integer interval $\lle 1,r\rre$. This combinatorial formula enables one to define the cumulants of $X$ assuming only that $X$ has moments of all order. Now, consider a finite set $\mathfrak{T}$ endowed with the uniform distribution, and a statistics $X : \mathfrak{T} \to \N$. The generating function $\esper[q^X]$ is given by the ratio $\frac{P(q)}{P(1)}$, where 
$$P(q) = \sum_{m=0}^\infty \big|\{T \in \mathfrak{T}\,\,|\,\,X(T)=m\}\big|\,q^m \qquad;\qquad P(1) = \sum_{m=0}^\infty \big|\{T \in \mathfrak{T}\,\,|\,\,
X(T)=m\}\big| = |\mathfrak{T}|.$$
\begin{lemma}\label{lem:fundamental}
Suppose that the polynomial $P \in \Z[q]$ is given by a ratio of $q$-integers:
$$P(q) = \frac{\prod_{k=1}^l[b_k]_{q}}{\prod_{k=1}^l[a_k]_{q}},$$
where $\{a_1,\ldots,a_l\}$ and $\{b_1,\ldots,b_l\}$ are multisets of non-negative integers. Then, $X$ takes its values in $\lle 0,n\rre$ with $n=\sum_{k=1}^l (b_k-a_k)$, and for any $r \geq 1$,
$$\kappa^{(r)}(X) = \frac{B_r}{r}\,\sum_{k=1}^l ((b_k)^r-(a_k)^r),$$
where $B_r$ is the $r$-th Bernoulli number. Consequently, the odd cumulants of $X$ of order larger than $3$ vanish.
\end{lemma}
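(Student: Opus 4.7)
The plan is to compute the log-Laplace transform of $X$ by hand and then read off its cumulants from the Taylor expansion. Since $\esper[q^X] = P(q)/P(1)$, substituting $q = \E^z$ yields $\log \esper[\E^{zX}] = \log P(\E^z) - \log P(1)$. The main observation I would exploit is the factorisation
$$[a]_{\E^z} = \frac{\E^{az}-1}{\E^z-1} = a\cdot \frac{\E^{az}-1}{az}\cdot \frac{z}{\E^z-1},$$
which gives $\log[a]_{\E^z} = \log a + f(az) - f(z)$ for the analytic function $f(z) := \log\frac{\E^z-1}{z}$, extended at $z=0$ by $f(0)=0$.

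Since the numerator and the denominator of $P$ both contain exactly $l$ factors, the nuisance terms $f(z)$ cancel telescopically when summed, and the constant piece $\sum_k(\log b_k - \log a_k)$ reconstructs exactly $\log P(1)$ and drops out. What remains is
$$\log \esper[\E^{zX}] = \sum_{k=1}^l \bigl(f(b_k z) - f(a_k z)\bigr).$$
If I write the Taylor expansion $f(z) = \sum_{r \geq 1} c_r\, z^r/r!$, this immediately gives $\kappa^{(r)}(X) = c_r \sum_{k=1}^l (b_k^r - a_k^r)$, so the whole task reduces to identifying the coefficients $c_r$.

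To do so, I would differentiate: $f'(z) = \frac{\E^z}{\E^z-1} - \frac{1}{z} = 1 + \frac{1}{z}\bigl(\frac{z}{\E^z-1}-1\bigr)$. The defining generating series $\frac{z}{\E^z-1} = \sum_{n\geq 0}\frac{B_n}{n!}z^n$ (with the standard value $B_1=-\tfrac{1}{2}$) then yields $f'(z) = \frac{1}{2} + \sum_{n\geq 2}\frac{B_n}{n!}z^{n-1}$, and integrating gives
$$f(z) = \frac{z}{2} + \sum_{n \geq 2}\frac{B_n}{n\cdot n!}\,z^n.$$
Thus $c_r = B_r/r$ for every $r \geq 1$, provided one adopts the convention $B_1 = +\tfrac{1}{2}$ (which is consistent with $\kappa^{(1)}$ being the mean of $X$, and is the only real nuance in the proof). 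Since $B_r = 0$ for odd $r \geq 3$, the corresponding cumulants vanish.

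Finally, the statement on the range of $X$ is almost immediate: each $[a]_q = 1 + q + \cdots + q^{a-1}$ has constant term $1$ and degree $a-1$, so if $P(q)$ is genuinely a polynomial (as assumed), its constant term is $1$ and its degree equals $\sum_k(b_k-a_k) = n$, which forces $X(\mathfrak{T}) \subset \lle 0, n\rre$. I do not foresee any serious obstacle in this argument; the only point requiring mild care is the Bernoulli sign convention at $r=1$, and the only computational step with any substance is the expansion of $f$ via the Bernoulli generating function.
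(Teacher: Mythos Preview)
Your proposal is correct and follows essentially the same approach as the paper: both compute $\log\esper[\E^{zX}]$ as $\sum_k \bigl(f(b_k z)-f(a_k z)\bigr)$ for $f(z)=\log\frac{\E^z-1}{z}$, and then identify the Taylor coefficients of $f$ as the divided Bernoulli numbers $B_r/r$. The only cosmetic differences are that the paper cancels the $(\E^z-1)$ factors directly in the ratio $[b_k]_q/[a_k]_q$ rather than writing out your telescoping $f(z)$ terms, and it defines the Bernoulli numbers via $\frac{t}{1-\E^{-t}}$ (so $B_1=+\tfrac12$ from the outset) rather than switching conventions at $r=1$.
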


\begin{proof}
Denote $c_m = \big|\{T \in \mathfrak{T}\,\,|\,\,X(T)=m\}\big|$, so that $P(q) = \sum_{m=0}^n c_m\,q^m$. By hypothesis, $P$ is a polynomial of degree $n$ and is the ratio of two polynomials $\prod_{k=1}^l (1-q^{b_k})$ and $\prod_{k=1}^l (1-q^{a_k})$ with degrees $\sum_{k=1}^l b_k$ and $\sum_{k=1}^l a_k$, so $n$ is equal to the difference of the two sums. Now, the Bernoulli numbers $B_r$ are defined by their generating series
$$\frac{t}{1-\E^{-t}}=\sum_{r=0}^\infty B_r\,\frac{t^r}{r!} = 1 + t\,\frac{\partial}{\partial t} \left(\sum_{r=1}^\infty\frac{B_r}{r}\, \frac{t^r}{r!}\right),$$
so the \emph{divided Bernoulli numbers} $\frac{B_r}{r}$ have for exponential generating series:
$$\sum_{r=1}^\infty\frac{B_r}{r}\, \frac{t^r}{r!} = \log\left(\frac{\E^t-1}{t}\right). $$
It suffices now to write, for $q=\E^z$ with $z$ close to $0$:
\begin{align*}
\log \esper[\E^{zX}] &= \sum_{k=1}^l \log\left(\frac{\E^{zb_k}-1}{\E^{za_k}-1}\right) = \sum_{k=1}^l \log\left(\frac{\E^{zb_k}-1}{zb_k}\right)-\log\left(\frac{\E^{za_k}-1}{za_k}\right) \\ 
&= \sum_{r=1}^\infty \frac{B_r}{r}\,z^r \left(\sum_{k=1}^l((b_k)^r - (a_k)^r)\right).
\end{align*}
Since $\frac{t}{1-\E^{-t}} - \frac{t}{2} = t\,\coth (\frac{t}{2})$ is an even function, the odd Bernoulli numbers of order $2r+1 \geq 3$ all vanish. Therefore, $\kappa^{(2r+1)}(X)=0$ for any $r \geq 1$.
\end{proof}
\medskip

Let us discuss a bit the properties of the generating series $\varphi(z) = \sum_{r=1}^\infty \frac{B_r}{r}\,\frac{z^r}{r!} = \log(\frac{\E^z-1}{z})$ of the divided Bernoulli numbers. We shall also work with 
$$\phi(z)=\varphi(z)-\frac{z}{2}=\log\left(\frac{\sinh \frac{z}{2}}{\frac{z}{2}}\right) = \sum_{r=2}^\infty \frac{B_r}{r}\,\frac{z^r}{r!},$$
which is an even function. The Bernoulli numbers are related to the Riemann $\zeta$ function by the equation:
$$B_{r} = 2\,\frac{(-1)^{\frac{r}{2}+1}\,r!}{(2\pi)^r}\,\zeta(r)$$
for $r$ even. Therefore, $|B_r| \simeq 4\sqrt{\pi r} \left(\frac{r}{2\pi \E}\right)^r$ for $r$ even, and the radius of convergence of the series $\varphi$  and $\phi$ is equal to $2\pi$. Indeed, these series exhibit two singularities at $t=\pm 2\I \pi$. However, the open disc $D_{(0,2\pi)}$ is not the whole domain of definition and analyticity of these functions. Indeed, $\phi'(z) =  \frac{1}{2} \coth \frac{z}{2}  - \frac{1}{z}$ makes sense for any $z \notin 2\I \pi \Z$, so if $z \notin \I\R$, then we can set 
$$\phi(z) = \int_{0}^1  \left(\frac{z}{2}\coth \frac{tz}{2} -\frac{1}{t}\right)\DD{t}.$$
Thus, the functions $\varphi(z)$ and $\phi(z)$ admit analytic extensions to the domain 
$$\mathscr{D}_0 = \C \setminus (\I[2\pi,+\infty) \sqcup \I(-\infty,-2\pi]).$$
In several proofs hereafter, an important property of the domain $\mathscr{D}_0$ will be that it is stable by the operation $z \mapsto xz$ for any $x \in [-1,1]$.
\begin{figure}[ht]
\begin{center}        
\begin{tikzpicture}[scale=0.75]
\draw [->] (-4,0) -- (4,0);
\draw [->] (0,-3) -- (0,3);
\draw [red,very thick] (0,1.5) -- (0,2.9);
\draw [red,very thick] (0,-1.5) -- (0,-3);
\draw [red,very thick] (-0.1,1.5) -- (0.1,1.5);
\draw [red,very thick] (-0.1,-1.5) -- (0.1,-1.5);
\draw [red] (-0.5,1.5) node {$2\I\pi$};
\draw [red] (-0.7,-1.5) node {$-2\I\pi$};
\end{tikzpicture}
\caption{The domain of definition and analyticity of the series $\phi(z)$ and $\varphi(z)$.}
\end{center}
\end{figure}
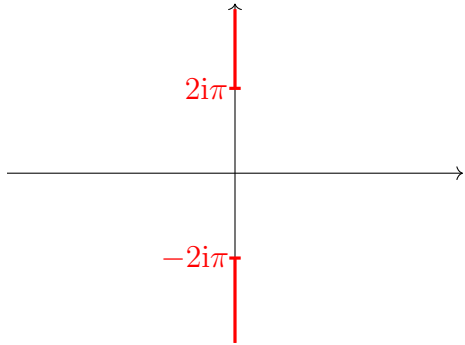

With  $\mathfrak{T} = \ST(\lambda)$ and $X(T)=\majT-b(\lambda)$, the random variable $X$ satisfies the hypotheses of Lemma \ref{lem:fundamental} with:
$$|\ST(\lambda)|\,\esper[q^X] = P(q) = \frac{\prod_{i=1}^{|\lambda|}[i]_q}{\prod_{\oblong \in \lambda}[h(\oblong)]_q}.$$
Therefore, for any $r \geq 1$,
\begin{equation}
    \kappa^{(r)}(\majT - b(\lambda)) = \frac{B_r}{r}\,\left(\sum_{i=1}^{|\lambda|} i^r - \sum_{\oblong \in \lambda} (h(\oblong))^r\right).\label{eq:cumulant_maj_ST}
\end{equation}
A large part of our work will consist in analysing the asymptotics of this combinatorial formula when $\lambda$ is a partition of growing size. If the growth of the sequence $(\lambda\expn)$ is specified a bit more precisely than in Theorem \ref{thm:BKS}, then it is a possible to write an asymptotic expansion of each $r$-th cumulant, which leads to an asymptotic expansion of the scaled log-Laplace transform $$\log \esper[\E^{z\,\frac{\majTn}{n}}];$$
see Theorem \ref{main:log_laplace}. Then, standard arguments of probability theory enable the computation of the large deviation estimates and of the Kolmogorov distance (see Theorems \ref{main:large_deviations} and \ref{main:berry_esseen} hereafter). 
\medskip

\subsection{Growing partitions and the Thoma simplex}\label{sub:thoma_simplex}
We are interested in the asymptotic behavior of the random variable $\majT$ when the underlying Young diagram $\lambda=\lambda\expn$ has size $n$ and grows to infinity while having a certain limit shape. There are several notions of limit shapes for Young diagrams; in our setting, a natural assumption is that the rows $\lambda\expn_1,\lambda\expn_2,\ldots$ and the columns $\lambda^{(n)'}_1,\lambda^{(n)'}_2,\ldots$ grow with known asymptotic frequencies $(\alpha_i)_{i \geq 1}$ and $(\beta_i)_{i \geq 1}$:
$$\alpha_i = \lim_{n \to \infty} \frac{\lambda\expn_i}{n}\qquad;\qquad \beta_i = \lim_{n \to \infty} \frac{\lambda^{(n)'}_i}{n}.$$
It is then convenient to introduce the so-called \emph{Frobenius coordinates} of Young diagrams, and to use them in order to embed $\ym = \bigsqcup_{n \in \N} \ym(n)$ in the \emph{Thoma simplex}. If $\lambda$ is an integer partition with size $n$, its Frobenius coordinates $(a_1,a_2,\ldots,a_d\,|\,b_1,b_2,\ldots,b_d)$ are the two sequences of half-integers which measure the size of the rows and columns of $\lambda$, starting from the diagonal; see Figure \ref{fig:frobenius_coordinates} for an example in size $11$, with $d=2$. 
\begin{figure}[ht]
 \begin{center}        
$$\vspace{2mm}\lambda = \begin{tikzpicture}[scale=1,baseline=0.5cm]
\draw [thick,red,dashed] (0,0) -- (2,2);
\draw (0,0) -- (5,0) -- (5,1) -- (4,1) -- (4,2) -- (2,2) -- (2,3) -- (0,3) -- (0,0);
\draw (4,0) -- (4,1) -- (0,1);
\draw (3,0) -- (3,2);
\draw (2,0) -- (2,2) -- (0,2);
\draw (1,0) -- (1,3);
\draw [thick,<->] (0.6,0.5) -- (4.9,0.5);
\draw [thick,<->] (1.6,1.5) -- (3.9,1.5);
\draw [thick,<->] (0.5,0.6) -- (0.5,2.9);
\draw [thick,<->] (1.5,1.6) -- (1.5,2.9);
\draw (1.7,2.3) node {\tiny $b_2$};
\draw (0.7,2.3) node {\tiny $b_1$};
\draw (3.4,1.7) node {\tiny $a_2$};
\draw (3.4,0.7) node {\tiny $a_1$};
\end{tikzpicture} =\left(\frac{9}{2},\frac{5}{2}\,\right|\left. \frac{5}{2},\frac{3}{2}\right).$$ 
 \caption{Frobenius coordinates of the integer partition $\lambda=(5,4,2)$.\label{fig:frobenius_coordinates}}
 \end{center}
 \end{figure}
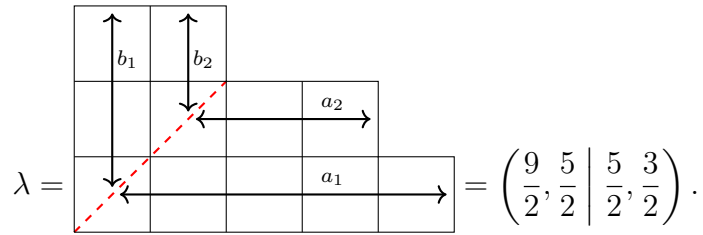

By considering the Frobenius coordinates as areas of regions of the Young diagram, it is easy to see that the sum $\sum_{i=1}^d (a_i+b_i)$ is equal to the size of the partition $\lambda$. The Thoma simplex $\Omega$ is defined as the set of pairs of infinite nonincreasing sequences
$$\Omega = \left\{((\alpha_i)_{i \geq 1},(\beta_i)_{i \geq 1})\,\,\bigg|\,\,\alpha_1\geq \alpha_2 \geq \cdots \geq 0,\,\,\beta_1 \geq \beta_2 \geq \cdots \geq 0,\,\,\sum_{i=1}^\infty (\alpha_i +\beta_i) \leq 1\right\}.$$
This infinite-dimensional simplex plays an important role in the asymptotic representation theory of the symmetric groups, because it parametrises the extremal characters of $\sym(\infty)$; see \cite{Tho64,KV81} and \cite[Theorem 11.31]{Mel17}. Any integer partition $\lambda=(a_1,\ldots,a_d\,|\,b_1,\ldots,b_d)$ with size $n \geq 1$ can be seen as an element of the Thoma simplex, by associating to it the pair $\omega_\lambda=(\alpha,\beta)$ with
$$\alpha = \left(\frac{a_1}{n},\frac{a_2}{n},\ldots,\frac{a_d}{n},0,0,\ldots\right)\quad;\quad \beta = \left(\frac{b_1}{n},\frac{b_2}{n},\ldots,\frac{b_d}{n},0,0,\ldots\right).$$
We call \emph{growing} a sequence of integer partitions $(\lambda\expn)_{n \geq 1}$ with $|\lambda\expn|=n$ for any $n \geq 1$, and we say that it is \emph{convergent} if it is growing and if the Thoma parameters $\omega_{\lambda\expn}$ converge coordinatewise towards a parameter $\omega \in \Omega$. Equivalently, all the rows and columns of the partitions $\lambda\expn$ rescaled by a factor $\frac{1}{n}$ admit limit frequencies.
Notice that the sum of the coordinates of the Thoma parameter $\omega_\lambda$ of an integer partition is always equal to
$$\frac{a_1}{n}+\cdots + \frac{a_d}{n} + \frac{b_1}{n}+\cdots + \frac{b_d}{n} = \frac{n}{n}
 = 1.$$
By the Fatou lemma, the sum of the coordinates of a limit $\omega$ of Thoma parameters of integer partitions is  smaller than $1$, and it can be strictly smaller than $1$ (consider for instance the case where $\lambda\expn$ is a square of size $\lfloor \sqrt{n} \rfloor \times \lfloor \sqrt{n}\rfloor$, plus a $O(\sqrt{n})$ additional boxes on its first row; then, the limit $\omega$ is the pair of null sequences).

\begin{lemma}\label{lem:frobenius_thoma}
Given a sequence $(\omega\expn)_{n \geq 1}=(\alpha\expn,\beta\expn)_{n \geq 1}$ of Thoma parameters, the following are equivalent:
\begin{enumerate}
    \item The parameters $\omega\expn$ converge coordinatewise towards a parameter $\omega$.
    \item For any $k\geq 2$, the $k$-th moments
    $$p_k(\omega\expn) = \sum_{i=1}^\infty (\alpha\expn_i)^k + (-1)^{k-1}\sum_{i=1}^\infty (\beta\expn_i)^k $$
    converge towards $p_k(\omega)$.
\end{enumerate}
\end{lemma}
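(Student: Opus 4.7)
My plan is to establish the two implications separately; the harder direction (2) $\Rightarrow$ (1) will be a compactness-plus-separation argument whose crux is a generating-function identity.

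For (1) $\Rightarrow$ (2), I would fix $k \geq 2$ and cut each series defining $p_k(\omega\expn)$ into a finite head and a tail. The monotonicity of the Thoma coordinates together with $\sum_{i\geq 1}\alpha\expn_i \leq 1$ yields the uniform bound
$$\sum_{i \geq N}(\alpha\expn_i)^k \leq (\alpha\expn_N)^{k-1}\sum_{i \geq N}\alpha\expn_i \leq (\alpha\expn_N)^{k-1},$$
and an identical estimate holds for $\beta\expn$. Since $\alpha_N \to 0$ as $N \to \infty$ and $\alpha\expn_N \to \alpha_N$ by hypothesis, choosing $N$ large makes the tails uniformly small in $n$; the finite head $\sum_{i < N}$ then passes to the limit coordinate by coordinate.

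For (2) $\Rightarrow$ (1), I would first note that $\Omega$ is compact for the coordinatewise topology: it is a closed subset of the Tychonoff-compact product $[0,1]^\N \times [0,1]^\N$, the closedness of the constraint $\sum_i(\alpha_i+\beta_i)\leq 1$ being a Fatou argument. Any subsequence of $(\omega\expn)$ therefore admits a coordinatewise convergent sub-subsequence with some limit $\tilde\omega \in \Omega$, and the first implication applied along this sub-subsequence gives $p_k(\tilde\omega) = p_k(\omega)$ for every $k \geq 2$. Once I know that the family $(p_k)_{k \geq 2}$ separates points of $\Omega$, this forces $\tilde\omega = \omega$, so every subsequential limit equals $\omega$, and full coordinatewise convergence follows.

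The main obstacle is precisely this separation step. My plan is to introduce the generating function
$$G_\omega(z) = \exp\!\left(\sum_{k \geq 2}\frac{p_k(\omega)}{k}\,z^k\right),$$
which is determined by $(p_k(\omega))_{k \geq 2}$ alone, and to identify it term-by-term with the regularised infinite product
$$G_\omega(z) = \prod_{i \geq 1}\frac{(1+\beta_i z)\,\E^{-\beta_i z}}{(1-\alpha_i z)\,\E^{\alpha_i z}}.$$
Each factor is $1+O(z^2)$ because the exponential prefactors cancel the linear parts of $\log(1+\beta_i z)$ and $-\log(1-\alpha_i z)$; together with $\sum_i(\alpha_i^2+\beta_i^2)<\infty$, this ensures the product converges and defines a meromorphic function on $\C$ whose zeros and poles, counted with multiplicity, are exactly the points $-1/\beta_i$ and $1/\alpha_i$ with $\alpha_i,\beta_i>0$. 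Reading off these multisets from $G_\omega$ and reordering non-increasingly recovers $\omega$. The subtlety worth flagging is that $p_1(\omega)$ is absent from the hypothesis, which is precisely why the Weierstrass-type exponentials $\E^{-\beta_i z}$ and $\E^{\alpha_i z}$ must be inserted: they absorb the missing linear information, which fortunately plays no role in locating the zeros and poles of the product.
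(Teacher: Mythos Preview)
Your proof is correct but takes a different route from the paper's. The paper encodes each $\omega$ as a probability measure $\mu_\omega = \sum_i \alpha_i\delta_{\alpha_i} + \sum_i \beta_i\delta_{-\beta_i} + \gamma\delta_0$ on $[-1,1]$, observes that $\int_{-1}^1 x^{k-1}\,\mu_\omega(dx) = p_k(\omega)$ for $k\geq 2$, and reduces the lemma to two standard facts: $\omega\mapsto\mu_\omega$ is a homeomorphism onto its image, and on a compact interval weak convergence coincides with moment convergence; both implications then fall out at once. Your argument is instead self-contained: a direct tail truncation for (1)$\Rightarrow$(2), and for (2)$\Rightarrow$(1) compactness of $\Omega$ together with your Weierstrass-product separation, which makes especially transparent why the absent $p_1$ is harmless (it only contributes an entire exponential factor, invisible to the zero and pole sets). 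What the paper's version buys is the measure $\mu_\omega$ itself, which is used pervasively in the rest of the article, so introducing it here is an economy. One minor sharpening of your (1)$\Rightarrow$(2): the uniformity in $n$ of the tail bound $(\alpha_N^{(n)})^{k-1}$ follows more directly from $\alpha_N^{(n)}\leq 1/N$ (since $N\alpha_N^{(n)}\leq\sum_{i\leq N}\alpha_i^{(n)}\leq 1$) than from the two-step limit you sketched.
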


\begin{proof}
Given a parameter $\omega=(\alpha,\beta) \in \Omega$, we set $\gamma = 1-\sum_{i=1}^\infty(\alpha_i+\beta_i)$. Let $\mathscr{M}^1([-1,1])$ be the set of Borel probability measures on $[-1,1]$, endowed with the topology of weak convergence. By using standard arguments from the theory of weak convergence of probability measures (see for instance \cite{Bill99}), it is not difficult to show that the map
\begin{align*}
\Omega &\to \mathscr{M}^1([-1,1]) \\
(\alpha,\beta) &\mapsto \mu_{(\alpha,\beta)}=\sum_{i=1}^\infty \alpha_i\,\delta_{\alpha_i} + \sum_{i=1}^\infty \beta_i\, \delta_{-\beta_i} + \gamma \,\delta_0
\end{align*}
is a homeomorphism towards a closed subset of $\mathscr{M}^{1}([-1,1])$. Since $[-1,1]$ is a compact interval, the weak convergence of its probability measures is equivalent to the convergence of all the moments. The result follows since
$$\int_{-1}^1 x^k \,\mu_{\omega}(\!\DD{x}) = p_{k+1}(\omega)$$
for any $k \geq 1$ and any $\omega \in \Omega$.
\end{proof}

In the sequel, we denote $p_1(\omega)=1$ for any $\omega \in \Omega$, and if $\lambda=(a_1,\ldots,a_d\,|\,b_1,\ldots,b_d)$ is an integer partition with size $n = |\lambda|\geq 1$, we set
$$p_k^{\mathrm{F}}(\lambda) = n^k\,p_k(\omega_\lambda) = \sum_{i=1}^d (a_i)^k + (-1)^{k-1} \sum_{i=1}^d (b_i)^k.$$
The $p_k^{\mathrm{F}}$ with $k \geq 1$ are the \emph{Frobenius moments}; in particular, $p_1^{\mathrm{F}}(\lambda)=|\lambda|=n$. One of the main tool that we shall use is the following:

\begin{theorem}[Asymptotics of the cumulants of the major index]\label{thm:asymptotics_cumulants}
Let $(\lambda\expn)_{n \geq 1}$ be a growing sequence of integer partitions; we denote $(T\expn)_{n \geq 1}$ the associated sequence of random standard tableaux. We have for any $r \geq 2$:
\begin{equation}
\kappa^{(r)}(\majTn) =  \frac{B_r}{r(r+1)}\,((p_1^{\mathrm{F}})^{r+1}-p_{r+1}^{\mathrm{F}} ) + \frac{B_r}{2r} \left((p_1^{\mathrm{F}})^r  + \sum_{s=1}^{r-1} \binom{r}{s} (-1)^{s}\, p_s^{\mathrm{F}}\,p_{r-s}^{\mathrm{F}}\right)  + O(n^{r-1}),\label{eq:cumulant_maj_ST2}
\end{equation}
where $p_k^{\mathrm{F}} = p_k^{\mathrm{F}}(\lambda\expn) = n^k\,p_k(\omega_{\lambda\expn})$, and where the remainder is uniform with respect to the choice of a growing sequence.
\end{theorem}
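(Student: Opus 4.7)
Since cumulants of order $r \geq 2$ are translation-invariant, Lemma \ref{lem:fundamental} applied to $X = \majT - b(\lambda)$ gives
\[
\kappa^{(r)}(\majTn) \;=\; \frac{B_r}{r}\left(\sum_{i=1}^n i^r - \sum_{\square \in \lambda\expn} h(\square)^r\right), \qquad r \geq 2.
\]
For odd $r \geq 3$ one has $B_r = 0$ and the two explicit terms of \eqref{eq:cumulant_maj_ST2} also carry $B_r$ as a prefactor, so the asymptotic reduces to $0 = 0 + O(n^{r-1})$ and is trivial. We henceforth assume $r$ even. Faulhaber's formula gives $\sum_{i=1}^n i^r = \frac{n^{r+1}}{r+1} + \frac{n^r}{2} + O(n^{r-1})$ with an explicit uniform remainder, and using $p_1^{\mathrm{F}}(\lambda) = n$ direct substitution shows that the theorem is equivalent to proving
\begin{equation*}
\sum_{\square \in \lambda} h(\square)^r \;=\; \frac{p_{r+1}^{\mathrm{F}}(\lambda)}{r+1} - \frac{1}{2}\sum_{s=1}^{r-1}\binom{r}{s}(-1)^s p_s^{\mathrm{F}}(\lambda)\,p_{r-s}^{\mathrm{F}}(\lambda) + O(n^{r-1}) \tag{$\star$}
\end{equation*}
uniformly in $\lambda \in \ym(n)$.

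For $(\star)$ the approach rests on the half-integer \emph{modified Frobenius coordinates} $\alpha_i := \lambda_i - i + \tfrac{1}{2}$ and $\beta_j := \lambda'_j - j + \tfrac{1}{2}$, under which $h(i,j) = \alpha_i + \beta_j$ and, crucially, $(i,j) \in \lambda$ if and only if $\alpha_i + \beta_j \geq 1$. The pivotal tool is the Maya-diagram identity: the multiset $\{\alpha_i : i \geq 1\}$ differs from the ``vacuum'' $\{-i+\tfrac{1}{2} : i \geq 1\}$ only by the finite exchange of $\{a_k\}_{k \leq d}$ with $\{-b_k\}_{k \leq d}$, where $d$ is the Durfee rank; therefore for every polynomial $f$
\[
\sum_{i\geq 1}\bigl[f(\alpha_i) - f(-i+\tfrac{1}{2})\bigr] \;=\; \sum_{k=1}^{d}\bigl[f(a_k) - f(-b_k)\bigr],
\]
which for $f(x) = x^m$ recovers $p_m^{\mathrm{F}}(\lambda)$ on the right. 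I would expand $(\alpha_i + \beta_j)^r$ by the binomial theorem and decompose $\sum_{(i,j) \in \lambda}$ into the Durfee square $(i, j \leq d)$, the arm $(i \leq d < j)$ and the leg $(j \leq d < i)$. Over the Durfee square the summand factors, producing a product of partial power sums in the $a_k$'s and $b_k$'s; completing each to a full $p_s^{\mathrm{F}}$ via the Maya identity yields the quadratic term of $(\star)$. The arm and leg contributions, on the other hand, are telescoped through the $s=0$ and $s=r$ binomial terms of the form $\alpha_i^{r+1}$ and $\beta_j^{r+1}$, whose outer summation over $i$ (resp.\ $j$) is converted by the Maya identity into $p_{r+1}^{\mathrm{F}}/(r+1)$.

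The main obstacle I anticipate is the coupling introduced by the indicator $\mathbbm{1}[(i,j) \in \lambda]$ in the arm and leg blocks: the upper limit $j \leq \lambda_i$ (resp.\ $i \leq \lambda'_j$) prevents a direct decoupling into products of one-variable sums. My plan is to express each partial sum as a full half-line sum minus a vacuum-tail correction, apply the Maya identity to the full sum, and bound the tail by Euler--Maclaurin for consecutive half-integers. The midpoint shift built into $\alpha_i$ and $\beta_j$ kills the first-order Bernoulli correction and yields a remainder of size $O(n^{r-1})$ uniformly in $\lambda$, matching the Faulhaber error for $\sum_{i=1}^n i^r$. The most delicate step will be verifying that all ``vacuum--vacuum'' contributions cancel exactly against this Faulhaber expansion---a cancellation dictated by $n = p_1^{\mathrm{F}}(\lambda) = \sum_k (a_k + b_k)$ and by the parity structure of the Bernoulli numbers.
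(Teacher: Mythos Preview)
Your reduction to $(\star)$ is correct, and the identity $h(i,j)=\alpha_i+\beta_j$ together with the Maya--diagram trick is a legitimate starting point, but the paper takes a quite different route. Instead of attacking $\sum_{\square}h(\square)^r$ head-on, it invokes a multiset identity (Lemma~\ref{lem:subtle_geometry})
\[
\{h(\square):\square\in\lambda\}\sqcup\{\lambda_i^*-\lambda_j^*:i<j\}\;=\;\{n+c(\square):\square\in\lambda\}\sqcup\{1^{(n-1)},2^{(n-2)},\ldots,(n-1)\},
\]
which trades hook lengths for \emph{contents} and differences of descent coordinates. The descent piece $\alpha_r=\tfrac12\sum_{i,j}(\lambda_i^*-\lambda_j^*)^r$ then factors cleanly over pairs and is converted to Frobenius moments by exactly your Maya identity; the content piece $\beta_r=\sum(n+c(\square))^r$ is handled not elementarily but through the Kerov--Olshanski algebra, using Jucys--Murphy elements to prove $p_k^{\oblong}=\sum c(\square)^k=\frac{p_{k+1}^{\mathrm F}}{k+1}+(\text{lower order})$ (Proposition~\ref{prop:jucys_murphy} and its corollary). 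This buys more than the asymptotic: it shows $\kappa^{(r)}(\majT)\in\obs$ exactly (Theorem~\ref{thm:cumulants_are_observables}), from which the uniform $O(n^{r-1})$ is automatic.

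Your direct approach is appealing because it avoids contents and representation theory, but as written it is a plan rather than a proof, and the gap is exactly where you locate it. The ``telescoping through the $s=0$ and $s=r$ binomial terms'' is not yet a mechanism: those terms are $\beta_j^{r}$ and $\alpha_i^{r}$, not $(r{+}1)$-th powers, and you have not said what happens to the arm/leg contributions for $1\le s\le r-1$, which are \emph{not} lower order (each is $O(n^{r+1})$ individually). The coupling $\mathbbm 1[(i,j)\in\lambda]$ is precisely why the paper detours through Lemma~\ref{lem:subtle_geometry}; if you push your line through you will effectively need the row-wise hook identity
\[
\{h(i,j):1\le j\le\lambda_i\}=\{1,2,\ldots,\lambda_i+\ell(\lambda)-i\}\setminus\{\lambda_i^*-\lambda_{i'}^*:i'>i\},
\]
which is what actually decouples the double sum --- and summing its $r$-th power version over $i$ is already the content of Lemma~\ref{lem:subtle_geometry}. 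So either carry out that identity explicitly and control the resulting four-way inclusion--exclusion, or adopt the paper's detour via contents.
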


At first sight, this might seem easy to prove, because the Frobenius coordinates of $\lambda$ dictate the geometry of its Young diagram, and because Equation \eqref{eq:cumulant_maj_ST} relates the cumulants of $\majT$ to the hook lengths of the cells of the Young diagram of $\lambda$, which also seem to be geometric observables. Unfortunately, the geometric connection between $(a_1,\ldots,a_d\,|\,b_1,\ldots,b_d)$ and $\{h(\oblong)\,\,|\,\,\oblong \in \lambda\}$ is more subtle than what one might think, for the following reason. If the cell $\oblong$ is on the $i$-th row and $j$-th column and if it belongs to the square of size $d \times d$ in the bottom left corner of $\lambda$ ($i \leq d$ and $j \leq d$), then $h(\oblong) = a_i + b_j$. 
\begin{figure}[ht]
\begin{center}

\begin{tikzpicture}[scale=1,baseline=0.5cm]
\fill [white!80!black] (0,0) rectangle (2,2);
\draw [thick] (0,0) rectangle (2,2);
\draw [thick,red,dashed] (0,0) -- (2,2);
\draw (0,0) -- (5,0) -- (5,1) -- (4,1) -- (4,2) -- (2,2) -- (2,3) -- (0,3) -- (0,0);
\draw (4,0) -- (4,1) -- (0,1);
\draw (3,0) -- (3,2);
\draw (2,0) -- (2,2) -- (0,2);
\draw (1,0) -- (1,3);
\draw [thick,->] (0.5,1.5) -- (-1.5,1.5);
\draw (-4.5,1.95) node {the part of $\lambda$ where the connection};
\draw (-4.5,1.5) node {between hook lengths and};
\draw (-4.5,1.1) node {Frobenius coordinates is clear};

\end{tikzpicture}
\caption{The Frobenius coordinates are not directly related to the hook lengths.}
\end{center}
\end{figure}
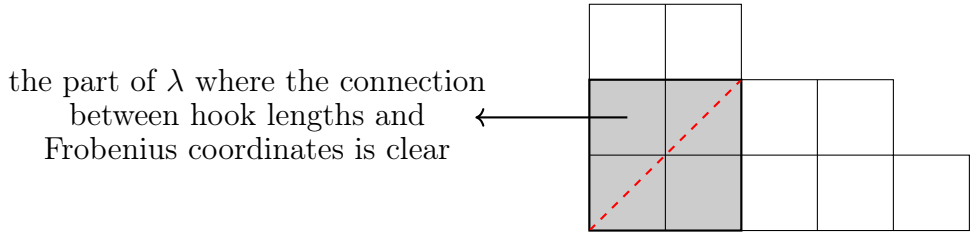
\vspace{2mm}

\noindent However, if $\oblong$ does not belong to the bottom left square (and in general this is the case for a large proportion of boxes), then the relation between $h(\oblong)$ and the Frobenius coordinates is much more complicated. For instance, if $\oblong=(i,j)$ satisfies $i \leq d$ and $j>d$, then $h(\oblong)$ depends on $a_i$ and on which $i'>i$ satisfy $a_{i'}+i'+\frac{1}{2}\geq j$. Thus, the transformation of Equation \eqref{eq:cumulant_maj_ST} into Equation \eqref{eq:cumulant_maj_ST2} (which is suitable to asymptotic analysis) is not immediate. We shall solve this problem by using a combinatorial bijection from the theory of integer partitions (see Lemma \ref{lem:subtle_geometry}), and by making calculations in the so-called Kerov--Olshanski algebra of observables.
\medskip

\subsection{Main results and outline of the paper}
We are now ready to state our main results regarding the asymptotic behavior of $\majTn$ when $T\expn \sim \mathcal{U}(\ST(\lambda\expn))$ and $(\lambda\expn)_{n \geq 1}$ is a growing or convergent sequence of integer partitions. Given a parameter $\omega \in \Omega$, we denote $\mu_\omega$ the corresponding probability measure on $[-1,1]$ (see Lemma \ref{lem:frobenius_thoma}), and  
\begin{align*}
\Lambda_\omega(z) &= \int_{t=0}^1 \int_{x=-1}^1 (\phi(tz) - \phi(txz))\,\mu_\omega(\!\DD{x})\DD{t} ; \\ 
\Psi_\omega(z) &= \frac{1}{2}\int_{x=-1}^1 \int_{y=-1}^1 \left(\phi(z)+\frac{\phi((y-x)z)-\phi(yz)-\phi(-xz)}{xy}\right)\mu_\omega^{\otimes 2}(\!\DD{x}\DD{y}).
\end{align*}

\begin{main}[Asymptotic expansion of the log-Laplace transform]\label{main:log_laplace}
Let $(\lambda\expn)_{n \geq 1}$ be a growing sequence of integer partitions, $(T\expn)_{n \geq 1}$ the associated sequence of random standard tableaux, and $(\mu\expn)_{n \geq 1}$ the associated sequence of probability measures on $[-1,1] $. If $X\expn = \frac{\majTn}{n}$, then
$$
\log \esper\!\left[\E^{z\,(X\expn-\esper[X\expn])}\right] = n\,\Lambda_{\omega\expn}(z) + \Psi_{\omega\expn}(z) + o(1).
$$
The remainder $o(1)$ goes to $0$ uniformly with respect to the growing sequence $(\lambda\expn)_{n \geq 1}$, and uniformly on any compact subset of the domain $\frac{1}{2}\mathscr{D}_0$.
\end{main}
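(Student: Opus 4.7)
\emph{Proof plan.} My starting point will be an explicit formula for the centred log-Laplace transform. Reworking the computation in the proof of Lemma~\ref{lem:fundamental} with $\varphi(z)=\phi(z)+\tfrac{z}{2}$, the linear contributions $\tfrac{\zeta}{2}\sum_{i}i$ and $-\tfrac{\zeta}{2}\sum_{\oblong}h(\oblong)$ cancel exactly against the centring term $-\zeta\,\esper[X]=-\zeta\,\kappa^{(1)}(X)$, yielding, for any $\lambda\in\ym(n)$, the key identity
\[
\log\esper\!\left[\E^{\zeta(\mathrm{maj}(T)-\esper[\mathrm{maj}(T)])}\right]=\sum_{i=1}^n\phi(i\zeta)-\sum_{\oblong\in\lambda}\phi(h(\oblong)\zeta).
\]
Substituting $\zeta=z/n$, the ratios $i/n$ and $h(\oblong)/n$ lie in $[0,1]$ (every hook is a subset of $\lambda$, so $h(\oblong)\leq n$), and the stability of $\mathscr{D}_0$ under multiplication by $[-1,1]$ makes this identity define an analytic function of $z\in\mathscr{D}_0$, in particular on $\tfrac{1}{2}\mathscr{D}_0$.

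\emph{Step 1: matching Taylor coefficients near the origin.} For $|z|<2\pi$, the Taylor series $\phi(u)=\sum_{r\geq 2}\frac{B_r}{r\,r!}u^r$ converges absolutely at every argument, and swapping the two sums gives
\[
\log\esper\!\left[\E^{z(X\expn-\esper[X\expn])}\right]=\sum_{r\geq 2}\frac{B_r\,z^r}{r\,r!\,n^r}\left(\sum_{i=1}^n i^r-\sum_{\oblong\in\lambda\expn}h(\oblong)^r\right).
\]
Faulhaber's formula $\sum_{i=1}^n i^r=\tfrac{n^{r+1}}{r+1}+\tfrac{n^r}{2}+O(n^{r-1})$, combined with Theorem~\ref{thm:asymptotics_cumulants} recast as an expansion of $\sum_{\oblong}h(\oblong)^r$, and the identity $p_k^{\mathrm{F}}(\lambda\expn)=n^k p_k(\omega\expn)$, shows that the coefficient of $z^r$ equals
\[
\frac{B_r\,n}{r(r+1)\,r!}\bigl(1-p_{r+1}(\omega\expn)\bigr)+\frac{B_r}{2\,r\,r!}\!\left(1+\sum_{s=1}^{r-1}\binom{r}{s}(-1)^sp_s(\omega\expn)p_{r-s}(\omega\expn)\right)+O(n^{-1}).
\]
On the other side, I would expand $\Lambda_\omega(z)$ and $\Psi_\omega(z)$ by plugging the Taylor series of $\phi$ into their integral definitions and applying $\int_0^1 t^r\DD{t}=\tfrac{1}{r+1}$, the moment identity $\int_{-1}^1 x^k\mu_\omega(\!\DD{x})=p_{k+1}(\omega)$ from Lemma~\ref{lem:frobenius_thoma}, and the binomial identity $(y-x)^r-y^r-(-x)^r=\sum_{s=1}^{r-1}\binom{r}{s}(-1)^s x^s y^{r-s}$; the right-hand side of the latter, divided by $xy$, is a polynomial in $(x,y)$, so the apparent singularity in the integrand of $\Psi_\omega$ is fictitious, and integrating term by term recovers exactly the same first two terms as above. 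This proves Theorem~\ref{main:log_laplace} modulo $o(1)$ on a disk around $0$, uniformly in the growing sequence.

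\emph{Step 2: extension to $\tfrac12\mathscr{D}_0$.} Set $g_n(z)=\log\esper[\E^{z(X\expn-\esper[X\expn])}]-n\Lambda_{\omega\expn}(z)-\Psi_{\omega\expn}(z)$: this is analytic on $\tfrac12\mathscr{D}_0$ and tends to $0$ on a neighbourhood of $0$ by Step~1. To propagate convergence to the full domain, I plan to prove that the family $(g_n)_{n\geq 1}$ is locally uniformly bounded on $\tfrac12\mathscr{D}_0$; Vitali's theorem then upgrades pointwise convergence on a disk to uniform convergence on every compact of $\tfrac12\mathscr{D}_0$, still uniformly in the growing sequence. Local boundedness reduces, via the identity of the first paragraph, to an Euler--Maclaurin-type estimate $\bigl|\sum_{i=1}^n\phi(iz/n)-n\int_0^1\phi(tz)\DD{t}-\tfrac{1}{2}\phi(z)\bigr|=O(1/n)$ uniformly for $z$ in a compact of $\mathscr{D}_0$, together with the analogous bound for $\sum_{\oblong}\phi(h(\oblong)z/n)$ obtained by first using the combinatorial bijection of Lemma~\ref{lem:subtle_geometry} to re-express the sum over hooks in terms of the Frobenius coordinates, and then running a one-dimensional Riemann-sum argument on the resulting expression.

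\emph{Main obstacle.} The coefficient matching of Step~1 is essentially a bookkeeping exercise once Theorem~\ref{thm:asymptotics_cumulants} is in hand. The delicate point is the local uniform bound in Step~2: Theorem~\ref{thm:asymptotics_cumulants} controls cumulants one at a time, and its remainder $O(n^{r-1})$ carries an implicit constant whose dependence on $r$ is not manifestly summable once $|z|$ leaves the convergence disk of the Bernoulli series. The cleanest way to avoid tracking an explicit $r$-dependence is to work directly at the level of the finite sums $\sum_{i}\phi(iz/n)$ and $\sum_{\oblong}\phi(h(\oblong)z/n)$ and to exploit the integral representation $\phi(z)=\int_0^1\bigl(\tfrac{z}{2}\coth(\tfrac{tz}{2})-\tfrac{1}{t}\bigr)\DD{t}$ valid on all of $\mathscr{D}_0$, which furnishes uniform control of $\phi$ on every compact and makes the Euler--Maclaurin comparison quantitative throughout the full slit plane $\tfrac12\mathscr{D}_0$.
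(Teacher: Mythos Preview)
Your proposal is correct and follows essentially the same route as the paper: explicit hook-length identity for the centred log-Laplace transform, coefficient matching on a disc via Theorem~\ref{thm:asymptotics_cumulants}, and extension to $\tfrac12\mathscr{D}_0$ by a Montel/Vitali normal-families argument once local uniform boundedness is in hand (this is exactly the paper's Lemma~\ref{lem:clever_complex} combined with Theorem~\ref{thm:control_laplace} and the compactness-based boundedness of $\Psi_{\omega\expn}$ from Lemma~\ref{lem:continuity_lambda_psi}). The one place your sketch understates the work is the phrase ``one-dimensional Riemann-sum argument'': after Lemma~\ref{lem:subtle_geometry}, the hook sum splits into a content sum, an integer sum, and a genuinely two-dimensional sum $\sum_{i<j}\phi\bigl((\lambda_i^*-\lambda_j^*)z/n\bigr)$ over descent differences, and the paper handles the latter by an inclusion--exclusion rewriting the descent set as the symmetric difference $F(\lambda)\,\Delta\,I_n$ and decomposing into $F\times F$, $F\times I_n$, $I_n\times I_n$ pieces before any Riemann-sum bound applies --- but this is a refinement of your Step~2 rather than a different idea.
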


\begin{main}[Strong large deviations of the major index]\label{main:large_deviations}
Let $(\lambda\expn)_{n \geq 1}$ be a convergent sequence of integer partitions with associated Thoma parameters $(\omega\expn)_{n \geq 1}$ and limit parameter $\omega \in \Omega$. We suppose that $\omega$ is not one of the two pairs
$$\omega_1 = ((1,0,\ldots),(0,\ldots))\quad;\quad \omega_{-1} = ((0,\ldots),(1,0,\ldots)).$$
\begin{enumerate}
\item The function $\Lambda_\omega$ is even and strictly convex on $\R$.

\item Fix a real number 
$$0<y<\frac{1}{4}\left(1-\int_{x=-1}^1 x\,\mu_\omega(\!\DD{x})\right).$$
We denote $\Lambda_{\omega\expn}^{*}(y) = \sup_{h \in \R} (hy-\Lambda_{\omega\expn}(h))$ the Legendre--Fenchel conjugate of $\Lambda_{\omega\expn}$. There exists a unique parameter $h \in (0,+\infty)$ such that $h = \Lambda_{\omega}'(y)$, and 
$$\proba[\majTn - \esper[\majTn] \geq  yn^2] = \frac{\E^{-n\,(\Lambda_{\omega\expn})^{*}(y)}}{h \sqrt{2\pi n \Lambda_\omega''(h)}}\,\E^{\Psi_\omega(h)}\,(1+o(1)).$$

\item In particular, with the same assumptions on $y$ and $(\lambda\expn)_{n \geq 1}$,
$$\lim_{n \to \infty} \frac{\log \proba[\majTn - \esper[\majTn] \geq yn^2]}{n} = -\Lambda_\omega^*(y).$$    
\end{enumerate}
\end{main}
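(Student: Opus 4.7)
\emph{Part (1).} Evenness is immediate from the evenness of $\phi$. For strict convexity on $\R$, differentiating twice under the integral yields
$$\Lambda_\omega''(z) = \int_0^1 t^2 \left(\phi''(tz) - \int_{-1}^1 x^2\,\phi''(txz)\,\mu_\omega(\!\DD{x})\right)\DD{t}.$$
I would then observe that $g(u) := u^2 \phi''(u) = 1 - u^2/(4\sinh^2(u/2))$ is even and strictly increasing in $|u|$, which reduces to the positivity of $u\cosh(u/2) - 2\sinh(u/2)$ for $u > 0$; this is elementary since that function vanishes at $0$ and has derivative $(u/2)\sinh(u/2) > 0$. Consequently $x^2\phi''(txz) \leq \phi''(tz)$ for $|x| \leq 1$ and $t,|z| > 0$, with equality only when $|x| = 1$. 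Since $\omega \neq \omega_{\pm 1}$ is exactly the statement that $\mu_\omega$ is not concentrated on $\{-1,+1\}$, the inner bracket is strictly positive on a set of positive $\mu_\omega \otimes \DD{t}$-measure, so $\Lambda_\omega''(z) > 0$ for $z \neq 0$; at $z = 0$ the expansion $\phi(u) = u^2/24 + O(u^4)$ gives $\Lambda_\omega''(0) = (1 - p_3(\omega))/36 > 0$ by the same dichotomy.

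\emph{Parts (2) and (3), setup.} I would follow the classical Bahadur--Rao strategy of exponential tilting combined with a local central limit theorem. Let $h_n > 0$ be the unique saddle point defined by $\Lambda_{\omega\expn}'(h_n) = y$; existence and uniqueness follow from Part (1) and the hypothesis on $y$, and $h_n \to h := (\Lambda_\omega')^{-1}(y)$. Introduce the tilted probability measure $\tilde{\proba}_n$ on $\ST(\lambda\expn)$ with density $\E^{h_n X\expn}/\esper[\E^{h_n X\expn}]$ relative to $\proba$, and write $f_n(z) = \log \esper[\E^{z(X\expn - \esper X\expn)}]$ and $\tilde X = X\expn - \esper X\expn - yn$. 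The change-of-measure identity yields
$$\proba[\majTn - \esper[\majTn] \geq yn^2] = \E^{f_n(h_n) - h_n ny}\,\esper_{\tilde{\proba}_n}\!\left[\E^{-h_n \tilde X}\,\mathbbm{1}_{\tilde X \geq 0}\right].$$
By Theorem A and the Legendre duality $\Lambda_{\omega\expn}^*(y) = h_n y - \Lambda_{\omega\expn}(h_n)$, the prefactor equals $\E^{-n\Lambda_{\omega\expn}^*(y) + \Psi_\omega(h) + o(1)}$.

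\emph{The sharp estimate.} It remains to show that the tilted expectation is asymptotic to $(h\sqrt{2\pi n \Lambda_\omega''(h)})^{-1}$. Under $\tilde{\proba}_n$ the variable $\tilde X$ has cumulant generating function $z \mapsto f_n(z+h_n) - f_n(h_n)$, hence mean $\Psi_\omega'(h) + o(1)$ and variance $n\Lambda_\omega''(h)(1+o(1))$ by Theorem A. I would then establish a local CLT for the lattice-valued $\tilde X$ (spacing $1/n$) by Fourier inversion, applying Theorem A along the contour $z = h_n + \I t$ inside $\tfrac{1}{2}\mathscr{D}_0$; this produces atomic weights of the form $\tilde{\proba}_n[\tilde X = x] \sim (n\sqrt{2\pi n \Lambda_\omega''(h)})^{-1}\,\E^{-x^2/(2n\Lambda_\omega''(h))}$, uniformly for $x = O(\sqrt{n})$. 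Summing against $\E^{-h_n x}$ over $x \geq 0$ and exploiting that the exponential decay operates on scale $O(1)$, a scale on which the Gaussian factor is essentially flat, yields the announced asymptotic. The principal technical difficulty is obtaining this local CLT with enough uniformity: Esseen-type smoothing is required to handle the Fourier tail away from $t = 0$, and the $O(1/n)$ lattice spacing of $\tilde X$ must be controlled carefully. Finally, Part (3) is an immediate consequence of Part (2): take logarithms, divide by $n$, and use the continuity of $\omega \mapsto \Lambda_\omega^*(y)$ along the convergence $\omega\expn \to \omega$ to absorb the subexponential factors into $o(1)$.
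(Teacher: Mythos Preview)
Your Part~(1) is correct and is essentially a spelled-out version of the paper's argument, which simply asserts that $h \mapsto \log\!\big(\frac{x\sinh(h/2)}{\sinh(hx/2)}\big)$ is even and strictly convex for $x \in (-1,1)$ and integrates. Your setup for Parts~(2)--(3) (tilting by $h_n$ with $\Lambda_{\omega\expn}'(h_n)=y$, Bahadur--Rao change of measure, reduction to the asymptotics of the tilted expectation) also matches the paper exactly.

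The gap is in the ``sharp estimate'' step. You propose a \emph{local} CLT for the lattice variable $\tildeX$ (spacing $1/n$) via Fourier inversion over the full period $[-\pi n,\pi n]$, invoking Theorem~A along the line $h_n+\I t$. But Theorem~A only gives $o(1)$ convergence \emph{locally uniformly} on $\frac{1}{2}\mathscr{D}_0$; on an interval of length $\sim n$ you have no control of the remainder, and hence no usable bound on the characteristic function of $\tildeX$ for $|t|$ large. You flag this as ``the principal technical difficulty'' and mention Esseen smoothing, but do not supply the missing ingredient.

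The paper avoids this obstacle by working with the \emph{cumulative} distribution rather than point masses. It proves a Berry--Esseen bound with a cubic Hermite correction (Proposition~\ref{prop:technical_large_deviations}): $\sup_t |F\expn(t)-G\expn(t)| = o(n^{-1/2})$, where $G\expn$ is Gaussian plus the $H_3$ term. The Berry smoothing inequality truncates the Fourier integral at $T=\Delta\sqrt{n}$, which after rescaling means one only evaluates $\Lambda_{\omega\expn}$ and $\Psi\expn$ at $h\expn+z$ with $z$ in a \emph{bounded} set --- exactly where Theorem~A applies. The range $\delta\sqrt{n}<|\xi|<\Delta\sqrt{n}$ is then handled by the key analytic Lemma~\ref{lem:difficult}: for $\omega\notin\{\omega_{\pm 1}\}$ and $h\neq 0$, one has $\Re(\Lambda_\omega(h+\I\xi)-\Lambda_\omega(h)) \leq -A(\delta,h,\omega)<0$ for all $|\xi|>\delta$, which forces exponential decay of the tilted characteristic function there. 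This lemma is the genuine crux (since $\E^{\Lambda_\omega}$ is \emph{not} the Laplace transform of a probability measure, the usual ``$|\widehat\mu|<1$ off the origin'' argument is unavailable), and it is absent from your proposal. Finally, the $o(n^{-1/2})$ accuracy --- not merely $O(n^{-1/2})$ --- is what makes the integration-by-parts computation of $\int \E^{-h\expn \sqrt{n\Lambda''}\,x}\,dF\expn(x)$ go through; this is why the Hermite correction is included.
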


It will be clear from the proof of Theorem \ref{main:large_deviations} that similar estimates hold for the probabilities $\proba[\majTn - \esper[\majTn] \leq -yn^2]$: with the same assumptions on $y$, there exists a unique parameter $h_- \in (-\infty,0)$ such that $h_- = \Lambda_\omega'(-y)$, and we have 
$$\proba[\majTn - \esper[\majTn] \leq  -yn^2] = \frac{\E^{-n\,(\Lambda_{\omega\expn})^{*}(-y)}}{|h_-| \sqrt{2\pi n \Lambda_\omega''(h_-)}}\,\E^{\Psi_\omega(h_-)}\,(1+o(1)).$$
The third part of Theorem \ref{main:large_deviations} can also be restated as follows: given a convergent sequence $(\lambda\expn)_{n \geq 1}$ of integer partitions with limiting parameter $\omega \in \Omega \setminus \{\omega_1,\omega_{-1}\}$, the sequence of random variables $(n^{-2}(\majTn-\esper[\majTn]))_{n \geq 1}$ satisfies a large deviation principle on $\R$ with speed $n$ and good rate function $\Lambda_\omega^*$ (see \cite[Section 1.2]{DZ98} for the general definition of a large deviation principle). However, let us remark that we are also able to describe the second order of the probabilities of large deviations: it is described by the same residue $\Psi_\omega$ as in the asymptotic expansion of the log-Laplace transforms. In the literature, these results are usually called \emph{precise}, \emph{sharp} or \emph{strong} large deviation estimates.

\begin{remark}
As explained in Subsection \ref{sub:schur_functions}, the previous results regard for the major index of a  permutation $\sigma\expn$ taken uniformly at random in a RSK class with growing shape $\lambda\expn$. Let us then relate our asymptotic estimates to the case where $\sigma\expn$ is taken uniformly at random in the whole symmetric group $\sym(n)$. This amounts to first choose at random an integer partition $\lambda\expn$ according to the Plancherel measure
$$\proba\expn[\lambda\expn] = \frac{|\ST(\lambda\expn)|^2}{n!},$$
and then to take $T\expn \sim \mathcal{U}(\ST(\lambda\expn))$. The asymptotics of the Plancherel measure on integer partitions are nowadays well known; see for instance \cite{LS77,KV77,KV81,BDJ99,BOO00,Oko00,Oko01,IO02}. If one rescales the Young diagram $\lambda\expn \sim \proba\expn$ in both directions by a factor $n^{-\frac{1}{2}}$, then this renormalised shape converges towards a continuous limiting curve (Logan--Shepp--Kerov--Vershik law of large numbers). In particular, the Frobenius coordinates of $\lambda\expn$ all converge to $0$, so in the Thoma simplex $\Omega$, we have the convergence in probability 
$$\lambda\expn \to \omega_0=((0,0,\ldots),(0,0,\ldots)).$$ 
Therefore, one can expect that at least for the first order, the asymptotics of $\mathrm{maj}(\sigma\expn)$ with $\sigma\expn \sim \mathcal{U}(\sym(n))$ are described by Theorems \ref{main:log_laplace} and \ref{main:large_deviations}, but in the special case where $\omega=\omega_0$ and $\mu_{\omega} = \delta_0$ is the Dirac measure at $0$. Thus, one can predict:
\begin{align*}
\log \esper\!\left[\E^{z\left(\frac{\mathrm{maj}(\sigma\expn) - \esper[\mathrm{maj}(\sigma\expn)]}{n}\right)}\right] &= n\,\Lambda_{\omega_0}(z) + O(1) = n\,\int_{t=0}^1 \phi(tz)\DD{t} + O(1).
\end{align*}
This is indeed the case, as can be seen by using the following combinatorial argument. The distribution of the statistics $\mathrm{maj}$ over the symmetric group $\sym(n)$ is well known to be the same as the distribution of the number of inversions $\mathrm{inv}$; see \cite{Foa68,FS78} for a bijective proof of this identity. By induction on $n$, it is easy to see that the generating function of the number of inversions is:
$$\esper[\E^{z\,\mathrm{maj}(\sigma\expn)}]=\esper[\E^{z\,\mathrm{inv}(\sigma\expn)}] = \prod_{k=1}^n \frac{1+\E^z+\E^{2z}+\cdots + \E^{(k-1)z}}{k} = \prod_{k=1}^n \frac{\E^{kz}-1}{k(\E^z-1)}.$$
Using also the identity $\esper[\mathrm{maj}(\sigma\expn)] = \frac{n(n-1)}{4}$, we therefore get:
\begin{align*}
\log \esper\!\left[\E^{z\left(\frac{\mathrm{maj}(\sigma\expn) - \esper[\mathrm{maj}(\sigma\expn)]}{n}\right)}\right] &= \sum_{k=1}^n\varphi\!\left(\frac{kz}{n}\right) - n \,\varphi\!\left(\frac{z}{n}\right) - \frac{(n-1)z}{4} \\ 
&= \sum_{k=1}^n\phi\!\left(\frac{kz}{n}\right) - n \,\varphi\!\left(\frac{z}{n}\right) + \frac{z}{2} \\ 
&= n\int_{t=0}^1 \phi(tz)\DD{t} + \frac{1}{2}\,\phi(z) + O(n^{-1}).
\end{align*}
Here we used the Euler--Maclaurin formula in order to transform a Riemann sum into an integral; this argument will be used several times throughout the paper, see the proof of Lemma \ref{lem:control_gammaz} for the details. We thus recover the expected term of order $O(n)$. The term of order $O(1)$ does not correspond with $$\Psi_{\omega_0}(z) = \frac{1}{2}\left(\phi(z)-\frac{z^2}{12}\right),$$ and this is due to the fact that we are not taking into account the fluctuations of the RSK shape of $\sigma\expn$ around its limit $\omega_0$ (these fluctuations do not cancel when taking the log-Laplace transform, since it is a non-linear functional of probability measures). However, notice that the computation above yields an explicit term $\Psi(z)$ with order $O(1)$. So, one can adapt Theorem \ref{main:large_deviations} and its proof to the case of $\mathrm{maj}(\sigma\expn)$ with $\sigma\expn \sim \mathcal{U}(\sym(n))$, just by taking this residue $\Psi(z)= \frac{1}{2}\phi(z) $ instead of a function $\Psi_\omega(z)$. So, one obtains the strong large deviations of $\mathrm{maj}(\sigma\expn)$, with an estimate of the probabilities of large deviations instead of their logarithms. To our knowledge, this result is new.
\end{remark}

\begin{main}[Kolmogorov distance between the major index and its normal approximation]\label{main:berry_esseen}
Let $(\lambda\expn)_{n \geq 1}$ be a growing sequence of integer partitions; we suppose to simplify that
$$\max \left(\frac{\lambda_1\expn}{n}, \frac{\lambda_1^{(n)'}}{n}\right) \leq \frac{1}{2}$$
for $n\geq n_0\geq 4$ (in other words, the first row and the first column of $\lambda\expn$ are not too close to $n$). Then, for $n \geq n_0$,
$$\dkol\!\left(\frac{\majTn - \esper[\majTn]}{\sqrt{\var(\majTn)}},\,\mathcal{N}(0,1)\right)  \leq \frac{C}{\sqrt{n}} $$
for some universal constant $C\leq 30$.
\end{main}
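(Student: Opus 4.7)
The natural tool is Esseen's smoothing inequality: with $X_n=(\majTn-\esper[\majTn])/\sqrt{\var(\majTn)}$ and $f_n(\xi)=\esper[\E^{\I \xi X_n}]$, for any $T>0$
$$\dkol\!\left(X_n,\mathcal{N}(0,1)\right)\leq \frac{1}{\pi}\int_{-T}^{T}\left|\frac{f_n(\xi)-\E^{-\xi^2/2}}{\xi}\right|d\xi + \frac{24}{\pi T\sqrt{2\pi}}.$$
The plan is to take $T$ of order $\sqrt{n}$, so that the second term already contributes the advertised $O(n^{-1/2})$ rate, and then to use the exact cumulant formula of Lemma~\ref{lem:fundamental} to show that on the ``normality zone'' $|\xi|\leq T$ the integrand is so small that it only contributes $O(n^{-1})$.

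The key input is Lemma~\ref{lem:fundamental}, giving
$$\kappa^{(2r)}(\majTn) = \frac{B_{2r}}{2r}\left(\sum_{i=1}^n i^{2r}-\sum_{\oblong\in\lambda^{(n)}}h(\oblong)^{2r}\right)$$
and the vanishing of all odd cumulants of order $\geq 3$. Combining $|B_{2r}|/(2r)! = 2\zeta(2r)/((2r)(2\pi)^{2r})$ with $h(\oblong)\leq n$ and $\sum i^{2r}\leq n^{2r+1}$ yields a uniform upper bound $|\kappa^{(2r)}(\majTn)|/(2r)! \leq A_0\,n^{2r+1}/(2\pi)^{2r}$ for $r\geq 2$ with an explicit constant $A_0$. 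Conversely, the hypothesis $\max(\lambda_1^{(n)}/n,\lambda_1^{(n)'}/n)\leq \frac{1}{2}$ forces a matching lower bound on the variance $\sigma_n^2=\kappa^{(2)}(\majTn)=\frac{1}{12}(\sum i^2-\sum h(\oblong)^2)$: via a direct hook-length count, or via Theorem~\ref{thm:asymptotics_cumulants} which gives $\sigma_n^2\sim \frac{n^3}{36}(1-p_2(\omega_{\lambda^{(n)}}))$, one checks that the hypothesis makes $1-p_2(\omega_{\lambda^{(n)}})$ stay above a positive explicit constant, hence $\sigma_n^2\geq c_0 n^3$ for $n\geq n_0$.

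Armed with these two-sided cumulant estimates, write for $|\xi|\leq c_1\sqrt{n}$
$$R_n(\xi):=\log f_n(\xi)+\frac{\xi^2}{2} = \sum_{r=2}^\infty \frac{\kappa^{(2r)}(\majTn)}{(2r)!}\left(\frac{\I\xi}{\sigma_n}\right)^{2r},$$
which is controlled by a geometric series in $\xi^2/(4\pi^2 c_0 n)$. Choosing $c_1$ small enough that this series converges and the $r=2$ term dominates yields $|R_n(\xi)|\leq A_1\xi^4/n$ for an explicit $A_1$; tightening further so that $A_1 c_1^2\leq \frac{1}{4}$ guarantees $|R_n(\xi)|\leq \xi^2/4$ on the whole zone, whence
$$|f_n(\xi)-\E^{-\xi^2/2}|=\E^{-\xi^2/2}\,|\E^{R_n(\xi)}-1|\leq A_1\,\frac{\xi^4}{n}\,\E^{-\xi^2/4}.$$
Inserting this into Esseen's inequality with $T=c_1\sqrt{n}$, the first term is bounded by $(1/\pi)\int_{\R}A_1\xi^3\E^{-\xi^2/4}/n\,d\xi = 16 A_1/(\pi n)=O(n^{-1})$, while the second term equals $24/(\pi c_1\sqrt{2\pi n})$.

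The main obstacle is the numerical calibration needed to reach the universal constant $C\leq 30$. One must carry explicit values of $A_0$, $c_0$, $c_1$, $A_1$ through each step and verify that the headline smoothing constant $24/(\pi c_1\sqrt{2\pi})$ plus the residual integral stays below $30$; this constrains $c_1$ from below, which in turn constrains the remainder bound to stay within the normality zone. The most delicate piece is the variance lower bound $\sigma_n^2\geq c_0 n^3$: it is the quantitative analogue of the non-degeneracy assumption of Theorem~\ref{thm:BKS}, and a careful combinatorial estimate on $\sum_{\oblong}h(\oblong)^2$ using the hypothesis on the first row and column is needed to obtain a value of $c_0$ large enough to let the rest of the calibration go through.
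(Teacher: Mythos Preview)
Your proposal is correct and follows essentially the same route as the paper: Esseen's smoothing inequality with $T\asymp\sqrt{n}$, combined with the uniform cumulant bound $|\kappa^{(r)}(\majTn)|\le \frac{|B_r|}{r}\sum_{i=1}^n i^r$ from Lemma~\ref{lem:fundamental} and a variance lower bound $\sigma_n^2\ge c_0 n^3$ drawn from the hypothesis on the first row and column. The paper packages the analytic step into a separate Lemma~\ref{lem:berry} (bounding $\dkol$ by $18KL/\sigma$ under $|\kappa^{(r)}(X)|\le r!\,\sigma^2 KL^{r-2}$), and then plugs in $L=(n+\tfrac12)/(2\pi)$ and $K=\tfrac14(n+\tfrac12)^3/(36\sigma^2)$; your direct treatment exploits the vanishing of odd cumulants to get $|R_n(\xi)|\lesssim \xi^4/n$ rather than $|\xi|^3/\sqrt{n}$, which makes the integral term $O(n^{-1})$ instead of $O(n^{-1/2})$ --- a harmless refinement, since the smoothing term $24/(\pi T\sqrt{2\pi})$ dominates either way.

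One small correction: the leading term of the variance is $\sigma_n^2\sim \tfrac{n^3}{36}(1-p_3(\omega_{\lambda\expn}))$, not $1-p_2$; see Example~\ref{ex:second_cumulant}. The hypothesis $\max(\lambda_1\expn/n,\lambda_1^{(n)'}/n)\le\tfrac12$ gives $p_3(\omega\expn)=\int x^2\,\mu_{\omega\expn}(\!\DD{x})\le\tfrac14$, hence $\sigma_n^2\ge \tfrac{3}{8}\cdot\tfrac{n^3}{36}$ for $n\ge 4$, which is exactly the explicit $c_0$ the paper uses to close the numerical calibration to $C\le 30$.
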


Although Theorems \ref{main:large_deviations} and \ref{main:berry_esseen} are not directly related, a control on the Kolmogorov distance between an exponentially tilted version of $\majTn$ and its normal approximation will be an important step towards the proof of the strong large deviation estimates. We close our introduction by making several remarks, and by giving a short outline of the proofs our our main results.

\begin{remark}
 Taking the exponential of the asymptotic expansion given by Theorem \ref{main:log_laplace}, we get:
 $$\esper\!\left[\E^{z(X\expn - \esper[X\expn])}\right] = \E^{n\,\Lambda_\omega(z)}\, \E^{\Psi_{\omega}(z)}\,(1+o(1)).$$
 Informally, this can be understood as follows: as $n$ goes to infinity, the centered random variable $X\expn - \esper[X\expn]$ is approximated by the sum of $n$ independent and identically distributed random variables with Laplace transform $\E^{\Lambda_\omega(z)}$, plus some remainder which is encoded in the Laplace sense by the multiplicative residue $\E^{\Psi_\omega(z)}$. This viewpoint is the one of \emph{mod-$\phi$ convergent sequences of random variables}, and this framework has been explored in particular in \cite{JKN11,KN12,DKN15,FMN16,FMN19,CDMN20}. An important assumption in these works is that $\E^{\Lambda_\omega(z)}$ is the Laplace transform of an infinitely divisible distribution, for instance a Poisson or Gaussian distribution. For the major index of a random standard tableau, the same ideas apply, but $\E^{\Lambda_\omega(z)}$ is not anymore the Laplace transform of a probability distribution (see Remark \ref{rem:not_log_laplace}). Thus, we obtain our strong large deviation results by comparing in the Laplace sense a random variable $X\expn$ and something which is not a random variable, but which plays an analogous role in the computations. We believe that this idea is of independent interest, and that it could lead to important extensions of the aforementioned framework of mod-$\phi$ convergent sequences.
\end{remark}

\begin{remark}
Theorem \ref{thm:BKS} relies on the analysis of Equation \eqref{eq:cumulant_maj_ST}: indeed, this formula implies that the cumulants of order $r\geq 3$ of the rescaled random major indices go to $0$ as $n$ goes to infinity, as long as the first row or first column of $\lambda=\lambda\expn$ does not contain almost all the cells. On the other hand, the theory of mod-Gaussian sequences admits a formulation in terms of upper bounds on the cumulants, see \cite[Section 9]{FMN16} and \cite[Sections 4-5]{FMN19}. So, it is natural to find the usual results from the theory of mod-$\phi$ sequences (Berry--Esseen estimates and strong large deviations) in the setting of random standard tableaux and their major indices. This also explains why we shall start our analysis by looking in details at the cumulants $\kappa^{(r)}(\majTn)$; although one could work directly with the log-Laplace transforms, it is much easier to first analyse the coefficients of these generating series, and then to resum them; see Remark  \ref{rem:why_cumulant}.
\end{remark}

\begin{remark}
In \cite{BS20}, several extensions of Theorem \ref{thm:BKS} are proved for other statistics of other random combinatorial objects, and in particular for the rank of a random semistandard tableau (semistandard means that the rows of the tableau are only weakly increasing, and therefore that one allows repetitions in the entries of the tableau). As the formula for the generating function of this statistics has a form very similar to the form of the generating series of the major index of a random standard tableau, it is almost certain that our main results have direct analogues in this setting, and with similar proofs. 
\end{remark}

The steps of the proof of our main results are the following:
\begin{itemize}
    \item We relate the cumulants of $\majT$ to several other observables of the integer partition $\lambda$, in particular the Frobenius moments (Subsection \ref{sub:descents_and_frobenius_moments}) and the power sums of the contents (Subsection \ref{sub:symmetric_functions_of_the_contents}). This enables one to prove that $\kappa^{(r)}(\majT)$ is an observable of $\lambda$ in the sense of Kerov and Olshanski, and to determine the asymptotic behavior of each cumulant, thereby proving Theorem \ref{thm:asymptotics_cumulants} (Subsection \ref{sub:asymptotics_of_the_cumulants}).
    \item Theorem \ref{main:berry_esseen} is then an easy consequence of the asymptotics of cumulants and of a general result relating the growth of the cumulants to the distance to the Gaussian distribution \cite[Corollary 30]{FMN19}; see Subsection \ref{sub:proof_B}.
    \item We resum the limits of the cumulants, and we prove that the limiting functions are the ones that appear in Theorem \ref{main:log_laplace} (Subsection \ref{sub:proof_A}). We also show that the asymptotic estimates of the log-Laplace transforms hold not only on a disc of convergence of the power series $\phi(z)$ and $\varphi(z)$, but in fact on the domain $\frac{1}{2}\,\mathscr{D}_0$.
    \item Finally, in Subsection \ref{sub:proof_C}, given a convergent sequence $(\lambda\expn)_{n \geq 1}$ with limiting parameter $\omega \in \Omega$, we study the behavior of the false Laplace transform $\E^{\Lambda_\omega(z)}$ on a line $z = h+\I \xi$ with $h \neq 0$ fixed and $\xi \in \R$. We choose the parameter $h$ so that the random variable $\tildeX\expn$ obtained from $X\expn$ by an exponential tilting with strength $h$ has its mean close to $\esper[X\expn]+ny$ instead of $\esper[X\expn]$, and we prove a central limit theorem with explicit Berry--Esseen estimate for the tilted random variable $\tildeX\expn$. This central limit theorem leads to our Theorem \ref{main:large_deviations} by standard arguments from the theory of large deviations.
\end{itemize}

\bigskip

\section{Observables of integer partitions}\label{sec:observables}
In this section, $\lambda=\lambda\expn$ is an integer partition of size $n$, and we aim to prove Theorem \ref{thm:asymptotics_cumulants} by rewriting the equation for cumulants \eqref{eq:cumulant_maj_ST} in the Kerov--Olshanski algebra $\obs$ of observables of integer partitions. We refer to \cite{KO94,IO02} and \cite[Chapter 7]{Mel17} for details on this algebra, which plays a major role in the asymptotic representation theory of symmetric groups and in the study of related models of large (random) integer partitions; see also \cite{OO98} for the connection with shifted symmetric functions, and \cite{IK99} for the realisation of $\obs$ as a subalgebra of the algebra of partial permutations. We shall mostly use the basis of Frobenius moments $(p_\mu^{\mathrm{F}})_{\mu \in \ym}$ of $\obs$, but in order to deal with the power sums of contents $(p_\mu^\oblong)_{\mu \in \ym}$, it will be convenient to also work with the canonical basis of \emph{renormalised character values} $(\varSigma_\mu)_{\mu \in \ym}$ (see Subsection \ref{sub:symmetric_functions_of_the_contents}).
\medskip

\subsection{Cumulants and the Kerov--Olshanski algebra}
We denote $\obs$ the algebra of functions from $\ym = \bigsqcup_{n \in \N} \ym(n)$ to $\R$ which is spanned algebraically by the Frobenius moments $p_k^{\mathrm{F}}$ with $k \geq 1$. By \cite[Proposition 1.5]{IO02}, the Frobenius moments are algebraically independent, so if 
$$p_\mu^{\mathrm{F}} = p_{\mu_1}^{\mathrm{F}}\,p_{\mu_2}^{\mathrm{F}} \cdots p_{\mu_\ell}^{\mathrm{F}}$$
for any integer partition $\mu = (\mu_1,\mu_2,\ldots,\mu_{\ell}) \in \ym$, then $(p_\mu^{\mathrm{F}})_{\mu \in \ym}$ forms a linear basis of $\obs$. In the following, the elements of $\obs$ will be called \emph{observables} of Young diagrams or integer partitions. We endow $\obs$ with the gradation $\deg p_\mu^{\mathrm{F}} = |\mu|$; then, $\obs$ becomes a graded algebra, which is isomorphic to the graded algebra $\Sym$ of symmetric functions (see \cite[Chapter I]{Mac95}): a natural isomorphism consists in sending each Frobenius moment $p_\mu^{\mathrm{F}}$ to the corresponding product of Newton power sums $p_\mu$. Theorem \ref{thm:asymptotics_cumulants} admits then the following reformulation:

\begin{theorem}[The cumulants belong to the algebra of observables]\label{thm:cumulants_are_observables}
For $r \geq 2$, the $r$-th cumulant of the statistics $\majT$ with $T$ uniformly chosen in $\ST(\lambda)$ is an element of the algebra of observables $\obs$. Moreover, $\deg \kappa^{(r)}(\majT) = r+1$, and the terms with degree $r+1$ and $r$ in $\kappa^{(r)}(\majT)$ are given by the formula of Theorem \ref{thm:asymptotics_cumulants}.
\end{theorem}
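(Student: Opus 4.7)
Cumulants of order $r \geq 2$ are invariant under deterministic shifts, so equation \eqref{eq:cumulant_maj_ST} can be rewritten as
$$\kappa^{(r)}(\majT) = \frac{B_r}{r}\bigl(S_r(n) - H_r(\lambda)\bigr),$$
where $S_r(n) := \sum_{i=1}^{n} i^r$ and $H_r(\lambda) := \sum_{\oblong \in \lambda} h(\oblong)^r$. Faulhaber's formula gives $S_r(n) = \frac{n^{r+1}}{r+1} + \frac{n^r}{2} + O(n^{r-1})$, which is manifestly a polynomial in $n = p_1^{\mathrm{F}}$ and hence lies in $\obs$ with degree $r+1$. So the statement of the theorem reduces to showing that the hook-length power sum $H_r(\lambda)$ is also an observable of degree $r+1$, and to computing its two homogeneous components of top degree in terms of Frobenius moments.

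The main obstacle is the geometric issue stressed in the paragraph preceding Theorem \ref{thm:asymptotics_cumulants}: the pointwise identity $h(i,j) = a_i + b_j$ only holds inside the bottom-left $d \times d$ Frobenius square, while the hook lengths of the remaining cells depend on the Frobenius coordinates in a much more intricate way. My plan is to invoke the combinatorial bijection of Lemma \ref{lem:subtle_geometry} (referenced in the excerpt) in order to rewrite the multiset $\{\!\{h(\oblong) : \oblong \in \lambda\}\!\}$ as a combination, possibly signed, of generalised hook sums indexed by pairs of Frobenius coordinates. Once such a reorganisation is in place, $H_r(\lambda)$ becomes a polynomial in the power sums $P_s = \sum_i a_i^{\,s}$ and $Q_s = \sum_i b_i^{\,s}$, hence in the Frobenius moments $p_s^{\mathrm{F}} = P_s + (-1)^{s-1} Q_s$. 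This yields membership in $\obs$ and the correct degree $r+1$.

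It then remains to extract the two top graded components. I expect the degree-$(r+1)$ piece to collapse to $\frac{p_{r+1}^{\mathrm{F}}}{r+1}$, with cancellations between the diagonal contribution $\frac{1}{r+1}\sum_i(a_i+b_i)^{r+1}$ and the off-diagonal corrections provided by the bijection being crucial. The degree-$r$ piece collects the cross-terms of the expansion; once reorganised in the basis of products $p_s^{\mathrm{F}}\,p_{r-s}^{\mathrm{F}}$, it should equal $-\frac{1}{2}\sum_{s=1}^{r-1}\binom{r}{s}(-1)^s p_s^{\mathrm{F}}\,p_{r-s}^{\mathrm{F}}$. Subtracting from the expansion of $S_r(n)$ and multiplying by $B_r/r$ then recovers exactly the formula of Theorem \ref{thm:asymptotics_cumulants}.

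The genuine difficulty in this scheme is not algebraic but combinatorial: the whole proof hinges on making the bijective decomposition of hook lengths explicit enough to track not just the leading term $p_{r+1}^{\mathrm{F}}/(r+1)$ but also the subleading degree-$r$ component of $H_r(\lambda)$. That is the role of Lemma \ref{lem:subtle_geometry}; the remainder of the proof is a graded computation in $\obs$ that becomes routine once the right combinatorial identity is available.
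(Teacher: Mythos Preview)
Your plan rests on a misreading of Lemma \ref{lem:subtle_geometry}. That lemma does \emph{not} express the multiset of hook lengths in terms of Frobenius coordinates. What it actually gives is the identity
\[
\sum_{\oblong \in \lambda} h(\oblong)^r \;=\; \sum_{\oblong \in \lambda} (n+c(\oblong))^r \;+\; \sum_{i=1}^{n}(n-i)\,i^r \;-\; \sum_{1\leq i<j\leq n}(\lambda_i^*-\lambda_j^*)^r,
\]
so the hook power sum is traded for three pieces: a sum over \emph{contents}, a purely numerical term, and a sum over differences of \emph{descent coordinates} $\lambda_i^* = \lambda_i - i + \tfrac{1}{2}$. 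None of these is directly a polynomial in the $P_s=\sum a_i^s$ and $Q_s=\sum b_i^s$ you envisage. (And even if it were, note that $P_s$ and $Q_s$ separately are not elements of $\obs$; only the combinations $p_s^{\mathrm{F}}=P_s+(-1)^{s-1}Q_s$ are, so ``polynomial in $P_s,Q_s$'' would not by itself give membership in $\obs$.)

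In the paper the descent piece $\alpha_r$ is handled by the identity $p_r^{\mathrm{F}}(\lambda)=\sum_{i\geq 1}(\lambda_i^*)^r-(-i+\tfrac12)^r$, which is indeed a short computation. But the content piece $\beta_r=\sum_{\oblong}(n+c(\oblong))^r$ is the genuine difficulty you are glossing over: one must show that the power sums of contents $p_k^\oblong(\lambda)=\sum_\oblong c(\oblong)^k$ lie in $\obs$ and compute their top two graded components. The paper does this via Proposition \ref{prop:jucys_murphy}, using Jucys--Murphy elements, the Ivanov--Kerov algebra of partial permutations, and the change of basis between the renormalised characters $\varSigma_\mu$ and the Frobenius moments. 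This is substantial representation-theoretic input, not a ``routine graded computation''; your proposal contains no mechanism to replace it. Your expected leading terms for $H_r$ are correct, but the route you sketch does not reach them.
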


\noindent Note that Theorem \ref{thm:cumulants_are_observables} immediately implies the uniform remainder $O(n^{r-1})$ in Theorem \ref{thm:asymptotics_cumulants}. Indeed, for any $\lambda\expn \in \ym(n)$, $p_\mu^{\mathrm{F}}(\lambda\expn) = n^{|\mu|}\,p_{\mu}(\omega_{\lambda\expn})$ for any $\mu \in \ym$, and on the other hand, $|p_\mu(\omega)| \leq 1$ for any $\omega \in \Omega$ and any $\mu \in \ym$. So, if the terms with degree smaller than $r-1$ in $\kappa^{(r)}(\majT)$ write explicitly as 
$$\sum_{|\nu| \leq r-1} c_\nu\,p_\nu^{\mathrm{F}},$$
then the evaluation of this remaining observable on an integer partition $\lambda\expn \in \ym(n)$ is bounded from above by $$\left(\sum_{|\nu| \leq r-1} |c_\nu|\right)\,n^{r-1},$$ 
so it is a uniform $O(n^{r-1})$, regardless of the choice of an integer partition $\lambda\expn$.
\bigskip

The starting argument of the proof of Theorem \ref{thm:cumulants_are_observables} is the following:
\begin{lemma}\label{lem:subtle_geometry}
For any integer partition $\lambda$ with length smaller than $n$, we have the equality of multisets:
\begin{align*}
&\{h(\oblong),\,\,\oblong \in \lambda \}\sqcup \{\lambda_i-\lambda_j+j-i,\,\,1\leq i<j\leq n\} \\ 
&=\{n+c(\oblong),\,\,\oblong \in \lambda\}\sqcup \{1^{(n-1)},2^{(n-2)},\ldots,(n-1)\}.
\end{align*}
where $c(\oblong)=j-i$ is in $i$-th row and the $j$-th column of the Young diagram $\lambda$ (\emph{content} of a cell), and $a^{(b)}$ denotes the sequence with $b$ values equal to $a$.
\end{lemma}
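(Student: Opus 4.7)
The plan is to extract the multiset equality from a polynomial identity in $\Z[q]$ obtained by computing the principal specialisation $s_\lambda(1,q,q^2,\ldots,q^{n-1})$ in two different ways. Setting $\ell_i = \lambda_i+n-i$ for $1\leq i\leq n$ (with $\lambda_i=0$ for $i>\ell(\lambda)$), the sequence $\ell_1>\ell_2>\cdots>\ell_n\geq 0$ is strictly decreasing, the differences $\ell_i-\ell_j=\lambda_i-\lambda_j+j-i$ are precisely the integers appearing in the second multiset on the left-hand side of the lemma, and $\{j-i\,|\,1\leq i<j\leq n\}=\{1^{(n-1)},2^{(n-2)},\ldots,(n-1)\}$ is the second multiset on the right-hand side. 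So the lemma is equivalent to the polynomial identity
$$\prod_{\oblong\in\lambda}(1-q^{h(\oblong)})\,\prod_{1\leq i<j\leq n}(1-q^{\ell_i-\ell_j}) \,=\, \prod_{\oblong\in\lambda}(1-q^{n+c(\oblong)})\,\prod_{1\leq i<j\leq n}(1-q^{j-i}),$$
since the assignment $\{a_i\}\mapsto\prod_i(1-q^{a_i})$ from finite multisets of positive integers to polynomials is injective: the multiplicity of a primitive $d$-th root of unity in $\prod_i(1-q^{a_i})$ equals $\#\{i\,|\,d\mid a_i\}$, and Möbius inversion on $d$ recovers the multiset.

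To establish this polynomial identity, I would first invoke the bialternant formula $s_\lambda(x_1,\ldots,x_n)=\det(x_i^{\lambda_j+n-j})/\det(x_i^{n-j})$ recalled in Subsection \ref{sub:schur_functions}, evaluated at $x_i=q^{i-1}$. Both determinants become Vandermonde matrices $\det((q^{\ell_j})^{i-1})=\prod_{i<j}(q^{\ell_j}-q^{\ell_i})$ and $\det((q^{n-j})^{i-1})=\prod_{i<j}(q^{n-j}-q^{n-i})$. Using the factorisations $q^{\ell_j}-q^{\ell_i}=-q^{\ell_j}(1-q^{\ell_i-\ell_j})$ and $q^{n-j}-q^{n-i}=-q^{n-j}(1-q^{j-i})$ valid for $i<j$, the signs $(-1)^{\binom{n}{2}}$ cancel between numerator and denominator, and pulling the powers of $q$ out column by column gives a total exponent equal to $\sum_{j=1}^{n}(j-1)(\ell_j-(n-j))=\sum_{j=1}^n(j-1)\lambda_j = b(\lambda)$, hence
$$s_\lambda(1,q,\ldots,q^{n-1}) = q^{b(\lambda)}\,\frac{\prod_{1\leq i<j\leq n}(1-q^{\ell_i-\ell_j})}{\prod_{1\leq i<j\leq n}(1-q^{j-i})}.$$

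On the other hand, Stanley's $q$-analogue of the hook-content formula \cite{Stan99} (the $q$-refinement of the semistandard count $\prod_\oblong(m+c(\oblong))/h(\oblong)$ recalled in the introduction) yields
$$s_\lambda(1,q,\ldots,q^{n-1}) = q^{b(\lambda)}\,\prod_{\oblong\in\lambda}\frac{1-q^{n+c(\oblong)}}{1-q^{h(\oblong)}}.$$
Equating these two expressions for the same principal specialisation, cancelling the common factor $q^{b(\lambda)}$, and cross-multiplying produces exactly the polynomial identity displayed above, which in turn gives the desired multiset equality.

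There is essentially no serious obstacle: the argument is a routine Vandermonde manipulation combined with two classical formulas for $s_\lambda$ already present in the excerpt. The only mildly delicate step is the bookkeeping of the exponent $b(\lambda)$ when factoring powers of $q$ out of the columns of the two determinants; this computation is what makes the $q^{b(\lambda)}$ factors on both sides match so that they can be cancelled. A direct combinatorial bijection between the two multisets is conceivable (in the spirit of the abacus of beta-numbers), but the algebraic route via two evaluations of $s_\lambda$ is by far the most transparent and self-contained.
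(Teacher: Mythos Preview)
Your proof is correct, but it runs in the opposite direction from the paper. The paper simply cites \cite[Proposition~4.63]{Mel17}, where the multiset identity is established directly (by an abacus/beta-number argument) and is then \emph{used} to derive the Stanley hook-content formula. You instead take the $q$-hook-content formula $s_\lambda(1,q,\ldots,q^{n-1})=q^{b(\lambda)}\prod_{\oblong}(1-q^{n+c(\oblong)})/(1-q^{h(\oblong)})$ as known, equate it with the bialternant evaluation, and read off the multiset equality via the (nicely justified) injectivity of $\{a_i\}\mapsto\prod_i(1-q^{a_i})$.

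This inversion is legitimate provided the $q$-hook-content formula has a proof independent of the lemma --- and such proofs exist (for instance via the combinatorial SSYT definition of $s_\lambda$). But be aware that several textbook derivations of that very formula, including the one in the reference the paper points to, pass through exactly this multiset identity; so as written your argument flirts with circularity. One sentence pointing to an independent proof of the hook-content formula would close the loop. A cosmetic slip: in your Vandermonde factorisation $q^{\ell_j}-q^{\ell_i}=+q^{\ell_j}(1-q^{\ell_i-\ell_j})$, not $-q^{\ell_j}(\cdots)$; harmless, since the same sign appears in numerator and denominator.
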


\begin{proof}
This identity is one of the main argument of the proof of the Stanley hook length formula; see for instance \cite[Proposition 4.63]{Mel17}.
\end{proof}

As a consequence, for any $r$ positive integer and any integer partition $\lambda$ with size $n$, we can rewrite
\begin{align*}
\sum_{\oblong \in \lambda} (h(\oblong))^r = \sum_{\oblong \in \lambda} (n+c(\oblong))^r + \sum_{i=1}^{n} (n-i)\,i^{r} - \sum_{1\leq i<j \leq n} (\lambda_i^* - \lambda_j^*)^r
\end{align*}
where $\lambda_i^*=\lambda_i-i+\frac{1}{2}$ is the $i$-th \emph{descent} of $\lambda$. So, Equation \eqref{eq:cumulant_maj_ST} becomes
\begin{equation}
\kappa^{(r)}(\majT - b(\lambda)) = \frac{B_r}{r}\,\left(\sum_{1\leq i<j \leq n} (\lambda_i^*-\lambda_j^*)^r - \sum_{\oblong \in \lambda} (n+c(\oblong))^r - \sum_{i=1}^{n} (n-i-1)\,i^{r}\right).\label{eq:cumulant_maj_ST3}
\end{equation}
Let us also write the consequence of Lemma \ref{lem:subtle_geometry} for the log-Laplace transform of $\majT$, without extraction of its coefficients.
\begin{proposition}
For any integer partition $\lambda$ with size $n \geq 1$ and any $z $ such that $2z \in \mathscr{D}_0$,
\begin{align*}
&\log \esper\!\left[\E^{z\,\frac{\majT}{n}}\right] \\ 
&= \frac{b(\lambda)\,z}{n} + \sum_{1\leq i<j\leq n}\varphi\!\left(\frac{(\lambda_i^*-\lambda_j^*)z}{n}\right) - \sum_{\oblong \in \lambda} \varphi\!\left(\left(1+\frac{c(\oblong)}{n}\right)z\right)- \sum_{k=1}^n (n-k-1)\,\varphi\!\left(\frac{kz}{n}\right),
\end{align*}
where $T \sim \mathcal{U}(\ST(\lambda))$.
\end{proposition}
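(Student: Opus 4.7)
My plan is to derive the stated formula directly from the product formula for $\esper[q^{X}]$ with $X = \majT - b(\lambda)$, using the auxiliary identity
\[
\frac{\E^{aw}-1}{\E^w-1} \;=\; a\,\exp\bigl(\varphi(aw)-\varphi(w)\bigr),
\]
valid whenever $w$ and $aw$ are in $\mathscr{D}_0$. Substituting $q=\E^{w}$ into
\[
|\ST(\lambda)|\,\esper[q^X] \;=\; \frac{\prod_{i=1}^{n}[i]_q}{\prod_{\oblong\in\lambda}[h(\oblong)]_q}
\]
and applying the identity factor-by-factor, the products $\prod_i i / \prod_\oblong h(\oblong)$ combine with the Frame--Robinson--Thrall formula to cancel $|\ST(\lambda)|$, while the $n$ copies of $-\varphi(w)$ from the numerator cancel the $n$ copies from the denominator (both products have $n=|\lambda|$ factors). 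What remains, after setting $w=z/n$ and multiplying $\esper[\E^{zX/n}]$ by $\E^{zb(\lambda)/n}$, is
\[
\log \esper\!\left[\E^{z\,\frac{\majT}{n}}\right] \;=\; \frac{b(\lambda)z}{n} \;+\; \sum_{i=1}^{n}\varphi\!\left(\frac{iz}{n}\right) \;-\; \sum_{\oblong\in\lambda}\varphi\!\left(\frac{h(\oblong)\,z}{n}\right).
\]

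Next I invoke Lemma~\ref{lem:subtle_geometry}. The multiset identity stated there allows one to replace the sum over hook lengths by a sum over shifted contents $n+c(\oblong)$, a contribution from the Frobenius descents $\lambda_i^*-\lambda_j^*$ (with a minus sign), and the double-counted diagonal $\{k^{(n-k)}:1\leq k\leq n-1\}$. Since $\varphi$ is analytic (and vanishes at $0$), applying $\varphi(\,\cdot\,z/n)$ term by term to the multiset equality yields
\[
\sum_{\oblong}\varphi\!\left(\frac{h(\oblong)z}{n}\right) \;=\; \sum_{\oblong}\varphi\!\left(\left(1+\tfrac{c(\oblong)}{n}\right)z\right) + \sum_{k=1}^{n-1}(n-k)\,\varphi\!\left(\frac{kz}{n}\right) - \sum_{1\leq i<j\leq n}\varphi\!\left(\frac{(\lambda_i^*-\lambda_j^*)\,z}{n}\right).
\]
Substituting and using the elementary bookkeeping $\sum_{k=1}^n\varphi(kz/n)-\sum_{k=1}^{n-1}(n-k)\varphi(kz/n) = -\sum_{k=1}^n(n-k-1)\varphi(kz/n)$ (the term $k=n$ contributes $+\varphi(z)$ on both sides) produces exactly the claimed formula.

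The remaining task is to justify the domain of validity. For $|z|$ small enough that every argument of $\varphi$ lies in the open disk $D_{(0,2\pi)}$, the identity is a genuine equality of convergent power series and Fubini is trivially justified; this already follows from combining Lemma~\ref{lem:fundamental} with Lemma~\ref{lem:subtle_geometry} coefficient-wise. To extend to the stated domain, I will observe that for every integer partition $\lambda$ of size $n$ one has $|\lambda_i^*-\lambda_j^*|\leq 2n-1$, $|1+c(\oblong)/n|<2$, and $k\leq n$, so that each argument of $\varphi$ on the right-hand side can be written as $t\cdot (2z)$ with $|t|\leq 1$. The stability of $\mathscr{D}_0$ under $w\mapsto xw$ for $x\in[-1,1]$ then shows that the right-hand side is analytic on the simply connected open set $\{z:2z\in\mathscr{D}_0\}$. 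The left-hand side extends analytically to the same open set by defining it as the unique analytic continuation along paths from $0$ of the germ $\log\esper[\E^{z\majT/n}]$ (normalized to $0$ at $z=0$); the equality, being valid on a neighborhood of the origin, persists by analytic continuation.

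\paragraph{Main obstacle.} The combinatorial manipulations are routine once the product formula and Lemma~\ref{lem:subtle_geometry} are in hand; the only point requiring genuine care is the analytic step, namely verifying that the condition $2z\in\mathscr{D}_0$ is the correct one to ensure that \emph{every} one of the three families of arguments of $\varphi$ stays inside $\mathscr{D}_0$, and exploiting the stability property of $\mathscr{D}_0$ to convert the \emph{a priori} formal identity coming from the cumulant expansion into a genuine identity of analytic functions on the large domain $\tfrac{1}{2}\mathscr{D}_0$.
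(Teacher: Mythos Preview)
Your proof is correct and follows essentially the same route as the paper's: start from the product formula $\esper[\E^{y\,\majT}] = \exp(b(\lambda)y + \sum_k \varphi(ky) - \sum_\oblong \varphi(h(\oblong)y))$ obtained via the hook length formula, then replace the sum over hook lengths using the multiset identity of Lemma~\ref{lem:subtle_geometry}. Your treatment of the domain $\tfrac{1}{2}\mathscr{D}_0$ is somewhat more explicit than the paper's (which simply records the bounds $\lambda_i^*-\lambda_j^*\leq 2n$ and $n+c(\oblong)\leq 2n$ and declares both sides well defined), but the underlying argument---all arguments of $\varphi$ are scalar multiples of $2z$ by a factor in $[-1,1]$, and $\mathscr{D}_0$ is stable under such rescaling---is the same.
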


\begin{proof}
The assumption $2z \in \mathscr{D}_0$ ensures that both sides of the formula are well defined, since $\lambda_i^*-\lambda_j^* \leq 2n$ for any pair $(i,j)$, and $n+c(\oblong) \leq 2n$ for any cell $\oblong \in \lambda$. Set $y = \frac{z}{n}$. We have
\begin{align*}
\esper[\E^{y\,\majT}] &= \frac{\E^{b(\lambda)\,y}}{|\ST(\lambda)|}\,\frac{\prod_{k=1}^n (\E^{ky}-1)}{\prod_{\oblong \in \lambda}(\E^{h(\oblong)\,y}-1)} = \exp\left(b(\lambda)\,y+ \sum_{k=1}^n \varphi(ky) - \sum_{\oblong \in \lambda} \varphi(h(\oblong)y)\right)
\end{align*}
by using the hook length formula $|\ST(\lambda)| = \frac{\prod_{k=1}^n k}{\prod_{\oblong \in \lambda}h(\oblong)}$. As before, the result follows by replacing the sum over hook lengths by three sums involving the integers $k\in \lle 1,n\rre$, the contents of the cells of $\lambda$ and the descent coordinates.
\end{proof}
\medskip

In order to prove Theorem \ref{thm:cumulants_are_observables}, it suffices to treat the case of even cumulants, as the odd cumulants vanish. The cumulants of order $r \geq 2$ are invariant by translation, so $\kappa^{(r)}(\majT - b(\lambda)) = \kappa^{(r)}(\majT)$. For any even integer $r \geq 2$, set 
\begin{align*}
\alpha_r &= \frac{1}{2}\,\sum_{1\leq i,j\leq n} (\lambda_i^* - \lambda_j^*)^r \quad;\quad \beta_r = \sum_{\oblong \in \lambda} (n+c(\oblong))^r \quad;\quad \gamma_r = \sum_{i=1}^n (n-i-1)\,i^r.
\end{align*}
The even cumulant $\kappa^{(r)}(\majT)$ is proportional to $\alpha_r - \beta_r - \gamma_r$, and we shall analyse separately these quantities. We shall also work with the following holomorphic functions on $\frac{1}{2}\mathscr{D}_0$:
\begin{align*}
\alpha(z) &= \frac{b(\lambda)\,z}{n}  + \sum_{1\leq i<j\leq n}\varphi\!\left(\frac{(\lambda_i^*-\lambda_j^*)}{n}\,z\right) ;\\ 
\beta(z) &= \sum_{\oblong \in \lambda} \varphi\!\left(\left(1+\frac{c(\oblong)}{n}\right)z\right);\\
\gamma(z) &= \sum_{k=1}^n (n-k-1)\,\varphi\!\left(\frac{kz}{n}\right).
\end{align*}
We have $\log \esper[\E^{z\,\frac{\majT}{n}}] = \alpha(z)-\beta(z)-\gamma(z)$, and for $r \geq 2$, the $r$-th coefficient of the Taylor expansion of $\alpha(z)$ at $z=0$ is equal to $\frac{B_r}{r}\,\frac{\alpha_r}{n^r}$, and similarly for the functions $\beta(z)$ and $\gamma(z)$.\medskip

 Note that the term $\gamma_r$ is easy to compute: we have
$$
\delta_r = \sum_{i=1}^n i^r = \frac{n^{r+1}}{r+1} + \sum_{s=1}^r \frac{B_s}{s!}\,r^{\downarrow s-1}\,n^{r+1-s} = \frac{(p_1^{\mathrm{F}})^{r+1}}{r+1} + \sum_{s=1}^r \frac{B_s}{s!}\,r^{\downarrow s-1}\,(p_1^{\mathrm{F}})^{r+1-s}
$$
for $r \geq 1$, and $\gamma_r = (n-1)\,\delta_r - \delta_{r+1} = (p_1^{\mathrm{F}} - 1)\,\delta_r - \delta_{r+1}$. Therefore, $\gamma_r$ is an observable with degree $r+2$, and we have the following expansion with respect to the degree for $r \geq 2$:
\begin{equation}
\gamma_r =\frac{(p_1^{\mathrm{F}})^{r+2}}{(r+1)(r+2)}  - \frac{(p_1^{\mathrm{F}})^{r+1}}{r+1}  -\frac{7\,(p_1^{\mathrm{F}})^r}{12} + \text{terms of degree smaller than }r-1.\label{eq:gamma_r}\tag{$\gamma$}
\end{equation}
In order to prove Theorem \ref{main:log_laplace}, we shall also use the following control of the function $\gamma(z)$.

\begin{lemma}\label{lem:control_gammaz}
If $z \in \mathscr{D}_0$, then
$$\left|\gamma(z) - \int_{t=0}^1 (n^2(1-t)-n)\, \varphi(tz)\DD{t} \right|$$
is locally uniformly bounded.
\end{lemma}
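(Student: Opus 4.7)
The strategy is to recognise $\gamma(z)$ as a right-Riemann sum and quantify the approximation error using the Euler--Maclaurin expansion, exploiting a boundary cancellation without which the bound would diverge.

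First, I would decompose the weight using $(n-k-1) = n(1-k/n)-1$ and write
$$\gamma(z) = n\sum_{k=1}^n g\!\left(\tfrac{k}{n}\right) - \sum_{k=1}^n h\!\left(\tfrac{k}{n}\right), \qquad g(t)=(1-t)\varphi(tz),\quad h(t) = \varphi(tz).$$
The candidate integral splits along the same decomposition:
$$\int_0^1 (n^2(1-t)-n)\,\varphi(tz)\DD{t} = n^2 \int_0^1 g(t)\DD{t} - n\int_0^1 h(t)\DD{t},$$
so the quantity to control equals $n^2 \epsilon_g - n\, \epsilon_h$, where $\epsilon_f := \frac{1}{n}\sum_{k=1}^n f(k/n) - \int_0^1 f\DD{t}$ denotes the right-Riemann-sum error of $f$ on $[0,1]$.

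Next, I would invoke the composite trapezoidal rule / first-order Euler--Maclaurin formula: for any $f \in C^2([0,1])$,
$$\epsilon_f = \frac{f(1)-f(0)}{2n} + O\!\left(\frac{\|f''\|_\infty}{n^2}\right).$$
The decisive observation is that $g(0)=1\cdot\varphi(0)=0$ (the power series $\varphi$ has no constant term) and $g(1)=0\cdot\varphi(z)=0$; hence the $1/n$ contribution to $\epsilon_g$ vanishes and $\epsilon_g = O(1/n^2)$, so $n^2\epsilon_g = O(1)$. For $h$, the first-order term contributes $(h(1)-h(0))/(2n) = \varphi(z)/(2n)$, so $n\,\epsilon_h = \tfrac{1}{2}\varphi(z)+O(1/n)$ is also bounded.

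For the local uniformity in $z$, I would use the fact, emphasised after the definition of $\phi$, that $\mathscr{D}_0$ is stable under $z\mapsto tz$ for $t\in[0,1]$. Consequently $g$, $h$ and all their $t$-derivatives are jointly continuous on $[0,1]\times\mathscr{D}_0$, and on any compact $K\subset\mathscr{D}_0$ the quantities $\sup_{z\in K}\|g''\|_{L^\infty([0,1])}$ and $\sup_{z\in K}\|h''\|_{L^\infty([0,1])}$ are finite. This transfers the uniformity of the Euler--Maclaurin estimate to the final bound. The main obstacle, and the only subtle point, is precisely the cancellation $g(0)=g(1)=0$: without it, $\epsilon_g$ would be only $O(1/n)$ and the multiplicative factor $n^2$ in front would produce a bound of order $n$ rather than $O(1)$. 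Splitting $n^2(1-t)-n$ into the two pieces $n^2(1-t)$ and $-n$ instead of treating it as one lump is therefore the key algebraic move that triggers this cancellation.
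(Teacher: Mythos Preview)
Your proof is correct and follows essentially the same Euler--Maclaurin argument as the paper. The paper applies the formula in one stroke to the single $n$-dependent function $f(x)=(n-x-1)\,\varphi(xz/n)$ on $[0,n]$, observing that $\|f''\|_\infty=O(n^{-1})$ and that the boundary terms are bounded (the same cancellation $\varphi(0)=0$ is doing the work there, so your decomposition into $g$ and $h$, while perfectly valid and perhaps more transparent, is not the indispensable ``key move'' you describe).
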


\begin{proof}
The Euler--Maclaurin formula ensures that for $f$ smooth function on $[0,n]$,
$$\sum_{k=1}^n f(k) = \int_0^n f(x)\DD{x} + \frac{f(n)-f(0)}{2} + \frac{f'(n)-f'(0)}{12} + O(n\|f''\|_\infty),$$
with a universal constant in the $O(\cdot)$. 
With $f(x) = (n-x-1)\,\varphi(\frac{xz}{n})$, $\|f''\|_\infty$ is a $O(n^{-1})$, with a uniform constant in the $O(\cdot)$ if $z$ stays in a compact subset $K$ of $\mathscr{D}_0$. Indeed, if $x \in [-1,1]$, then the scaled parameter $xz$  also stays in a compact subset $K'$ of $\mathscr{D}_0$; see Figure \ref{fig:scale_compact}, where $K$ is in purple and $K'\setminus K$ is in blue. 
\begin{figure}[ht]

\begin{center}
\begin{tikzpicture}[scale=0.75]
\draw [->] (-5,0) -- (5,0);
\draw [->] (0,-5) -- (0,5);
\draw [very thick,red] (0,3.14) -- (0,4.9);
\draw [very thick,red] (0,-3.14) -- (0,-5);
\draw [very thick,red] (-0.1,3.14) -- (0.1,3.14);
\draw [very thick,red] (-0.1,-3.14) -- (0.1,-3.14);
\draw [red] (-0.6,3.14) node {$2\I \pi$};
\draw [red] (0.75,-3.14) node {$-2\I \pi$};
\fill [shift={(2,2)},blue!50!white] (-75:1.5) -- (-2,-2) -- (165:1.5);
\draw [shift={(2,2)},thick,blue] (-75:1.5) -- (-2,-2) -- (165:1.5);
\fill [shift={(-2,-2)},blue!50!white] (105:1.5) -- (2,2) -- (-15:1.5);
\fill [blue!50!white] (-2,-2) circle (1.5);
\draw [shift={(-2,-2)},thick,blue] (105:1.5) -- (2,2) -- (-15:1.5);
\draw [shift={(-2,-2)},thick,blue] (105:1.5) arc (105:345:1.5);
\fill [violet!30!white] (2,2) circle (1.5);
\draw [thick,violet] (2,2) circle (1.5);
\draw [violet] (3,2) node {$K$};
\end{tikzpicture}
\end{center}
\caption{If $K$ is a compact subset of $\mathscr{D}_0$, then $[-1,1]K = K'$ is also a compact subset of $\mathscr{D}_0$.\label{fig:scale_compact}}
\end{figure}
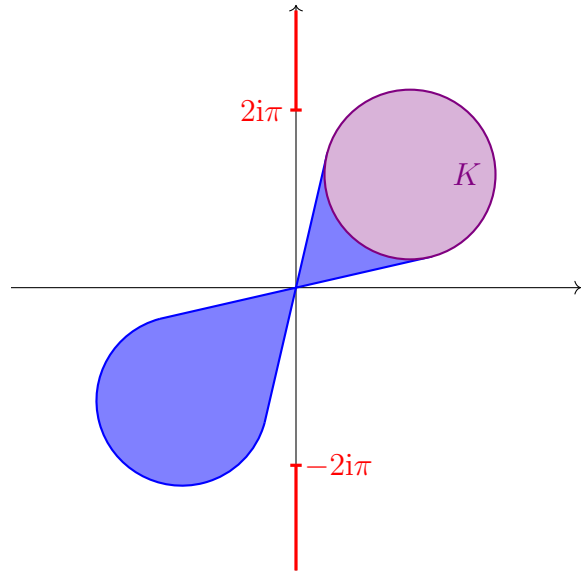
 The result follows now immediately by making the change of variable $x=nt$ in the main term of the Euler--Maclaurin formula.
\end{proof}
\medskip

\subsection{Descents and Frobenius moments}\label{sub:descents_and_frobenius_moments}
Given an integer partition $\lambda$, let us denote $A(\lambda)=(a_1,\ldots,a_d)$ and $B(\lambda)=(-b_1,\ldots,-b_d)$, so that $p_r^{\mathrm{F}}(\lambda) = \sum_{x \in A(\lambda)} x^r - \sum_{x \in B(\lambda)} x^r$. The signed Frobenius coordinates are related to the set of \emph{descent coordinates} $D(\lambda)= \{\lambda_i-i+\frac{1}{2},\,i \geq 1\}=\{\lambda_i^*,\,i\geq 1\}$ by the relations:
$$A(\lambda) = \Z'_+ \cap D(\lambda)\qquad;\qquad B(\lambda) = \Z'_- \setminus (\Z_-' \cap D(\lambda)),$$
where $\Z' = \Z + \frac{1}{2}$ is the set of half-integers; see \cite[Proposition 1.1]{IO02}. These relations are obvious if one draws the Young diagram $\lambda$ with the Russian convention, that is to say rotated by $45$ degrees and with cells of area $2$: see Figure \ref{fig:descent_coordinates}.
\begin{figure}[ht]
\begin{center}
\begin{tikzpicture}[scale=0.75]
\draw [->] (-6,0) -- (6,0);
\draw [->] (0,-0.3) -- (0,6);
\draw [thick] (0,0) -- (5,5) -- (4,6) -- (3,5) -- (2,6) -- (0,4) -- (-1,5) -- (-3,3) -- (0,0);
\draw [thick] (4,4) -- (3,5) -- (-1,1);
\draw [thick] (3,3) -- (1,5);
\draw [thick] (2,2) -- (0,4) -- (-2,2);
\draw [thick] (1,1) -- (-2,4);
\draw [thick] (6,6) -- (5,5);
\draw [thick] (-3,3) -- (-6,6);
\draw [thick, red] (4.5,5.5) -- (4.5,0);
\draw [thick, red] (2.5,5.5) -- (2.5,0);
\draw [thick, red] (-0.5,4.5) -- (-0.5,0);
\draw [thick, red] (-3.5,3.5) -- (-3.5,0);
\draw [thick, red] (-4.5,4.5) -- (-4.5,0);
\draw [thick, red] (-5.5,5.5) -- (-5.5,0);
\draw [thick,red,shift={(0.1,0.1)}] (5,5) -- (4,6);
\draw [thick,red,shift={(0.1,0.1)}] (3,5) -- (2,6);
\draw [thick,red,shift={(0.1,0.1)}] (-1,5) -- (0,4);
\draw [thick,red,shift={(0.1,0.1)}] (-6,6) -- (-3,3);
\foreach \x in {-5.5,-4.5,-3.5,-2.5,-1.5,-0.5,0.5,1.5,2.5,3.5,4.5}
\draw (\x,-0.2) -- (\x,0.2);
\foreach \x in {-5.5,-4.5,-3.5,-0.5,2.5,4.5}
\fill [red]  (\x,0) circle (0.13);
\end{tikzpicture}
\end{center}
\caption{The descent coordinates of the integer partition $\lambda=(5,4,2)$.\label{fig:descent_coordinates}}
\end{figure}
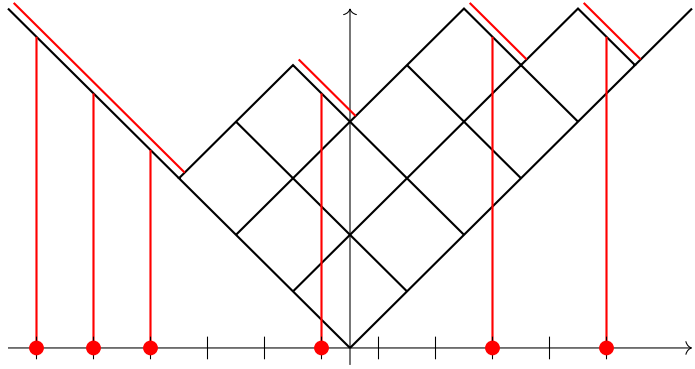

\noindent As a consequence,
$$p_{r}^{\mathrm{F}}(\lambda) = \sum_{i \geq 1} \left(\lambda_i-i+\frac{1}{2}\right)^r - \left(-i+\frac{1}{2}\right)^r;$$
see \cite[Proposition 1.4]{IO02}. Therefore, for $r\geq 2$ even, the term $\alpha_r$ admits the following expansion in terms of the Frobenius moments and of the sums $\delta_r^* = \sum_{i=1}^n (i-\frac{1}{2})^r$:
\begin{align*}
\alpha_r &= \frac{1}{2}\,\sum_{s=0}^r \binom{r}{s}\,(-1)^{r-s} \left(\sum_{i=1}^n (\lambda_i^*)^s\right)\left(\sum_{j=1}^n(\lambda_j^*)^{r-s}\right) \\
&= \frac{1}{2}\,\sum_{s=0}^r \binom{r}{s}\,(-1)^{r-s} (p_s^\mathrm{F} +(-1)^s \delta_s^*)(p_{r-s}^{\mathrm{F}} + (-1)^{r-s} \delta_{r-s}^*)\\ 
 &=\frac{1}{2}\,\sum_{s=0}^r \binom{r}{s}\, ((-1)^{s}\, p_s^\mathrm{F} \,p_{r-s}^\mathrm{F}  +(-1)^s\, \delta_s^*\,\delta_{r-s}^* + 2\,\delta_s^* \,p_{r-s}^\mathrm{F} ).
\end{align*}
Obviously, $\delta_r^* = \sum_{s=0}^r \binom{r}{s} (-\frac{1}{2})^{r-s}\, \delta_s$, so the formula above proves that $\alpha_r$ belongs to the algebra $\obs$. Moreover, it is of degree $r+2$, and we obtain its leading terms:
\begin{equation}
\alpha_r = \frac{(p_1^{\mathrm{F}})^{r+2}}{(r+1)(r+2)} + \frac{1}{2}\,\sum_{s=0}^r \binom{r}{s}\, \left(\frac{2 (p_1^{\mathrm{F}})^{s+1}\,p_{r-s}^{\mathrm{F}}}{s+1} + (-1)^{s}\, p_s^\mathrm{F} \,p_{r-s}^\mathrm{F} \right) - \frac{(p_1^{\mathrm{F}})^r}{12}+\cdots\label{eq:alpha_r}\tag{$\alpha$}
\end{equation}
by computing some combinatorial sums, for instance $\sum_{s=0}^r \binom{r}{s}\,\frac{(-1)^s}{(s+1)(r-s+1)} = \frac{2}{(r+1)(r+2)}$ for $r$ even. The remainder is an observable with degree smaller than $r-1$, for any $r \geq 2$. The following lemma is the corresponding control of the function $\alpha(z)$: 

\begin{lemma}
\label{lem:control_alphaz}
If $z \in \frac{1}{2}\mathscr{D}_0$, then
$$\left|\alpha(z) - \int_{t=0}^1 \int_{x=-1}^1 \left(n^2(1-t)\,\varphi(tz) + n\,\frac{\varphi((x+t)z)-\varphi(tz)}{x}\right)\mu\expn(\!\DD{x})\DD{t} \right|,$$
is locally uniformly bounded, independently from the growing sequence $(\lambda\expn)_{n\geq 1}$.
\end{lemma}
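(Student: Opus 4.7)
The proof plan is to mirror the Euler--Maclaurin approach of Lemma \ref{lem:control_gammaz}, but applied to a double sum. First, I decompose $\varphi(z)=z/2+\phi(z)$ with $\phi$ even and $\phi(0)=0$. The linear part of $\sum_{i<j}\varphi((\lambda_i^*-\lambda_j^*)z/n)$, combined with $b(\lambda)z/n$, collapses to the explicit polynomial $z(n^2+6n-7)/12 = n^2z/12+nz/2 + O(1)$; the computation uses $\sum_{i=1}^n\lambda_i^*=n-n^2/2$ and the analogous identity for $\sum_i i\lambda_i^*$. This matches the linear part of the target integral, namely $n^2z/12+nz/2$, up to $O(1)$. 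For the remaining symmetric part, since $\phi$ is even and $\phi(0)=0$, we have $\sum_{i<j}\phi(\cdot)=\frac{1}{2}\int\int\phi((x-y)z)\,d\nu_\lambda(x)\,d\nu_\lambda(y)$ using the descent measure $\nu_\lambda=\sum_{i=1}^n\delta_{\lambda_i^*/n}$ together with its reference counterpart $\nu_\emptyset=\sum_{i=1}^n\delta_{-(2i-1)/(2n)}$ corresponding to the empty partition.

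The algebraic core of the argument is the Frobenius bijection underlying Lemma \ref{lem:subtle_geometry}: the signed measure $\sigma:=\nu_\lambda-\nu_\emptyset$ is precisely $\sum_{i=1}^d\delta_{a_i/n}-\sum_{i=1}^d\delta_{-b_i/n}$, where $(a_i,b_i)$ are the Frobenius coordinates. Expanding $\nu_\lambda^{\otimes 2}=\nu_\emptyset^{\otimes 2}+2\,\nu_\emptyset\otimes\sigma+\sigma^{\otimes 2}$ (using the symmetry of $\phi$) splits the double sum into three contributions. The reference piece $\frac{1}{2}\int\int\phi((x-y)z)\,d\nu_\emptyset^{\otimes 2}$ collapses to $\sum_{k=1}^{n-1}(n-k)\phi(kz/n)$, which Euler--Maclaurin handles exactly as in Lemma \ref{lem:control_gammaz}, producing $n^2\int_0^1(1-t)\phi(tz)\,dt+O(1)$. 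The cross piece $\int\int\phi((x-y)z)\,d\sigma(x)\,d\nu_\emptyset(y)$ decomposes, for each of the $2d\leq 2\sqrt{n}$ Frobenius coordinates, into a midpoint Riemann sum with per-index error $O(1/n^2)$, giving total error $O(d/n^2)=o(1)$. To identify the main term with the target, I use the identity, valid since $\gamma\expn=0$ for actual partitions,
\[\int_{-1}^{1}\frac{G(x)}{x}\,d\mu\expn(x)=\int_{-1}^{1}G(x)\,d\sigma(x)\qquad\text{whenever }G(0)=0,\]
applied to $G(x)=\phi((x+t)z)-\phi(tz)$ and combined with $\int d\sigma=0$.

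The main obstacle is the quadratic piece $A_3:=\frac{1}{2}\int\int\phi((x-y)z)\,d\sigma^{\otimes 2}$: the crude bound is $O(d^2)=O(n)$ since $|\sigma|([-1,1])=2d$, but the lemma demands $O(1)$. The cancellation comes from $\int d\sigma=0$ and from the uniform bound $\bigl|\int x^r\,d\sigma\bigr|=|p_r(\omega_\lambda)|\leq 1$. Expanding $\phi$ as a Taylor series makes this explicit for $|z|<\pi$, where $|A_3|\leq\sum_{k\geq 1}|\phi_{2k}|(2|z|)^{2k}<\infty$. For general $z\in\frac{1}{2}\mathscr{D}_0$, I would instead use the product expansion $\phi(w)=\sum_{r\geq 1}\log(1+w^2/(2\pi r)^2)$, which converges on $\mathscr{D}_0$: a Cauchy estimate using the distance of $(x-y)z$ to the forbidden points $\pm 2\pi r\I$ handles the finitely many small $r$, while term-by-term Taylor expansion in the remaining factors gives $|I_r|=O(|z|^2/r^2)$ uniformly in $\lambda$. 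Summing in $r$ yields the locally uniform bound on $A_3$, and combining the three pieces with the linear correction proves the lemma.
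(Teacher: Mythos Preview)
Your decomposition is essentially the paper's: writing $\nu_\lambda=\nu_\emptyset+\sigma$ is exactly the inclusion--exclusion the paper performs in Equation \eqref{eq:inclusion_exclusion}, and your three pieces (reference, cross, quadratic) match theirs term for term. The linear reduction and the treatments of the reference and cross pieces are correct (a minor quibble: the midpoint error per Frobenius coordinate is $O(1/n)$, not $O(1/n^2)$, but the total is still $O(d/n)=O(n^{-1/2})=o(1)$).

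The genuine gap is in your handling of $A_3=\tfrac12\iint\phi((x-y)z)\,d\sigma^{\otimes 2}$ for $z\in\tfrac12\mathscr{D}_0$ with $|z|\geq\pi$. The product expansion $\phi(w)=\sum_{r\geq 1}\log\bigl(1+w^2/(2\pi r)^2\bigr)$ does not help: for each of the finitely many small $r$ (those with $2\pi r\leq 2\sup_K|z|$), the term $I_r=\tfrac12\iint g_r((x-y)z)\,d\sigma^{\otimes 2}$ is again a double integral of a bounded analytic function against $\sigma^{\otimes 2}$, with no Taylor expansion available since $|(x-y)z|$ can exceed $2\pi r$. A ``Cauchy estimate'' bounds Taylor coefficients or values of $g_r$, but that only yields the crude $|I_r|\leq C\cdot|\sigma|([-1,1])^2=O(d^2)=O(n)$; the needed cancellation from $\int d\sigma=0$ and $|p_m(\omega)|\leq1$ is not accessible without a convergent power-series representation in $(x-y)$, which is precisely what fails for small $r$. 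Nor can you bootstrap from the bound on $|z|<\pi$ by analytic continuation: a family of holomorphic functions uniformly bounded on a subdisc need not be uniformly bounded on larger compacts (consider $(z/\pi)^n$).

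The paper closes this gap by applying to $A_3$ the \emph{same} identity you already use for the cross term: $\int f\,d\sigma=\int \tfrac{f(x)}{x}\,d\mu\expn$. This converts $A_3$ into $\tfrac12\iint\tfrac{\phi((y-x)z)}{xy}\,d\mu^{(n),\otimes 2}$; subtracting $\tfrac{\phi(yz)+\phi(-xz)}{xy}$ (legitimate since $\int\tfrac{1}{x}\,d\mu\expn=0$) makes the integrand extend continuously to $[-1,1]^2\times\tfrac12\mathscr{D}_0$, and one integrates a continuous function over a compact against a \emph{probability} measure, yielding the locally uniform $O(1)$ bound directly. This single step replaces your entire product-expansion argument for $A_3$.
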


\begin{proof}
First, let us remove the linear terms from $\alpha$ and $\varphi$. We have
\begin{align*}
b(\lambda)+ \frac{1}{2}\sum_{1\leq i<j\leq n}(\lambda_i^* - \lambda_j^*) &= \frac{1}{2}\left(\sum_{i=1}^n (n-1)\lambda_i + \sum_{1\leq i<j \leq n} (j-i) \right) \\ 
&= \frac{n(n-1)}{2} + \frac{n(n-1)(n+1)}{12} = \frac{n^3}{12}+\frac{n^2}{2} - \frac{7n}{12},
\end{align*}
and on the other hand,
$$B_1\int_{t=0}^1 \int_{x=-1}^1 \left(n^2(1-t)\,tz + n\,z\right)\mu\expn(\!\DD{x})\DD{t} = \frac{n^2z}{12}+\frac{nz}{2},$$
so the lemma is equivalent to the locally uniform boundedness of 
$$\overline{\alpha}(z) - \int_{t=0}^1 \int_{x=-1}^1 \left(n^2(1-t)\,\phi(tz) + n\,\frac{\phi((x+t)z)-\phi(tz)}{x}\right)\mu\expn(\!\DD{x})\DD{t} $$
with $\overline{\alpha}(z) = \frac{1}{2}\sum_{1\leq i,j\leq n}\phi(\frac{(\lambda_i^* - \lambda_j^*)z}{n})$. We now consider the interval of half-integers $$I_n=\left\{-n+\frac{1}{2},-n+\frac{3}{2},\ldots,-\frac{1}{2}\right\},$$ which contains $n$ entries. If $F(\lambda) = A(\lambda) \sqcup B(\lambda)$ and $D_n = \{\lambda_i-i+\frac{1}{2},\,\,1\leq i \leq n\}$, then by the discussion at the beginning of this paragraph, $D_n$ is the symmetric difference $(F(\lambda) )\Delta (I_n)$. Therefore, for any even function $G$,
\begin{align}
\sum_{x,y \in D_n} G(y-x) &= \sum_{x,y} 1_{x \in D_n} 1_{y \in D_n}\,G(y-x) \notag\\ 
&= \sum_{x,y} (1_{x \in F} + 1_{x\in I_n} -2\, 1_{x \in B}) (1_{y \in F} +1_{y \in I_n} - 2\,1_{y \in B})\,G(y-x) \notag\\ 
&= \sum_{x,y} (1_{x \in A} + 1_{x\in I_n} -1_{x \in B}) (1_{y \in A} +1_{y \in I_n} - 1_{y \in B})\,G(y-x) \notag\\ 
&=\sum_{x,y \in F} \mathrm{sgn}(xy)\,G(y-x) + 2\sum_{x\in F,\,y \in I_n} \mathrm{sgn}(x)\, G(y-x) +\sum_{x,y \in I_n} G(y-x).\label{eq:inclusion_exclusion}
\end{align}
With $G(t) = \frac{1}{2}\,\phi(\frac{tz}{n})$, the first term of \eqref{eq:inclusion_exclusion} rewrites as 
\begin{align*}
\sum_{x,y \in F} \mathrm{sgn}(xy)\,G(y-x) &= \frac{1}{2} \int_{x=-1}^1 \int_{y=-1}^1 \frac{\phi((y-x)z)}{xy}\, \mu^{(n),\otimes 2}(\!\DD{x} \DD{y}) \\ 
&= \frac{1}{2} \int_{x=-1}^1 \int_{y=-1}^1 \frac{\phi((y-x)z)-\phi(yz) - \phi(-xz)}{xy}\, \mu^{(n),\otimes 2}(\!\DD{x} \DD{y}) ,
\end{align*}
by using on the second line the vanishing of $\int_{-1}^1 \frac{1}{x}\,\mu\expn(\!\DD{x})$, which comes from the fact that $\lambda$ has the same number of positive and negative Frobenius coordinates. The double integral above is involved in the $O(1)$ term of the asymptotics of the log-Laplace transform; let us just explain why it is locally uniformly bounded with respect to the variable $z$. We are integrating against a probability measure on $[-1,1]^2$ the function
$$G(x,y,z)= \begin{cases}
    \frac{\phi(z(y-x))-\phi(zy)-\phi(-zx)}{xy} &\text{if }xy\neq 0,\\
    \frac{1}{y^2} \left(1-\frac{zy}{2}\coth \frac{zy}{2}\right)&\text{if }x=0 \text{ and }y\neq 0,\\ 
    \frac{1}{x^2} \left(1-\frac{zx}{2}\coth \frac{zx}{2}\right)&\text{if }x\neq 0 \text{ and }y= 0,\\ 
    -\frac{z^2}{12}&\text{if }x=y=0,
\end{cases}$$ 
which is continuous on $[-1,1]^2 \times \mathscr{D}_0$. Therefore, if $z$ stays in a compact subset $K$ of $\mathscr{D}_0$, then $G(x,y,z)$ is uniformly bounded by some constant $C(K)$ (continuous function on a compact), and the integral $\iint_{[-1,1]^2} G(x,y,z)\,\mu^{(n),\otimes 2}(\!\DD{x} \DD{y})$ is also bounded by $C(K)$.
\medskip

We now deal with the two remaining terms of Equation \eqref{eq:inclusion_exclusion}. We have
\begin{align*}
2\sum_{x\in F,\,y \in I_n} \mathrm{sgn}(x)\, G(y-x) &= \int_{x=-1}^1 \left(\sum_{y \in -I_n}\frac{\phi((x+\frac{y}{n})z)}{x}\right) \,\mu\expn(\!\DD{x}) \\ 
&=n\,\int_{t=0}^1\int_{x=-1}^1 \frac{\phi((x+t)z)}{x}\,\mu\expn(\!\DD{x})\DD{t} + O(n^{-1}).
\end{align*}
Indeed, the sum over half-integers in $-I_n$ is a Riemann sum with middle points, and for any smooth function $f$ on $[0,1]$,
$$\left|\sum_{y \in -I_n} f\!\left(\frac{y}{n}\right) - n\int_{0}^1 f(t)\DD{t}\right| \leq \frac{\|f''\|_\infty}{24n^2}.$$
We use this estimate with $f(t) = \phi((x+t)z)$, and the remainder is a uniform $O(n^{-1})$ if $z$ stays in a compact subset of $\mathscr{D}_0$. By using as before the vanishing of $\int_{-1}^1 \frac{1}{x}\,\mu\expn(\!\DD{x})$, we can replace $\phi((x+t)z)$ by the difference $\phi((x+t)z)-\phi(tz)$, thereby getting the term of order $n$ in our asymptotic expansion of $\overline{\alpha}(z)$. Finally, 
$$\sum_{x,y \in I_n} G(y-x) = \frac{n^2}{2} \int_{u=0}^1 \int_{v=0}^1 \phi(|u-v|z) \DD{u}\DD{v} + O(n^{-1})$$
by using twice the argument of Riemann sums with middle points, and the even character of the function $\phi$. Since the image of the Lebesgue measure on the square $[0,1]^2$ by the map $(u,v) \mapsto |u-v|$ is the density $2(1-t)\DD{t}$ on $[0,1]$, we thus obtain the term of order $n^2$.
\end{proof}

\begin{remark}\label{rem:why_cumulant}
The integral formula from Lemma \ref{lem:control_alphaz} can be derived directly from Equation \eqref{eq:alpha_r} if $|z|<\pi$, by resummation of the series $\varphi$. The interest of Lemma \ref{lem:control_alphaz} lies in the control on the whole domain $\frac{1}{2}\mathscr{D}_0$; and the same remark applies to the functions $\beta(z)$ and $\gamma(z)$, see Lemmas  \ref{lem:control_gammaz} and \ref{lem:control_betaz}. Technically, we could have dealt only with the analytic functions $\alpha(z)$, $\beta(z)$ and $\gamma(z)$, but it is very difficult to guess the correct form of the integrals approximating these functions. For instance, the term of order $n$ in the control of $\alpha(z)$ can be written as
$$\int_{t=0}^1 \int_{x=-1}^1 \frac{\varphi((x+t)z)-\varphi(tz)}{x}\mu\expn(\!\DD{x})\DD{t}
$$
or as
$$\int_{t=0}^1 \int_{x=-1}^1 \frac{(1+x)\,\varphi((1+x)tz)-\varphi(tz)-x\,\varphi(txz)}{x}\mu\expn(\!\DD{x})\DD{t},
$$
and it is almost impossible to guess the correct form without using the cumulants as a guide. In order to prove that the integrals above are equal, it seems also necessary to expand the power series $\varphi$, which again leads us to the cumulants (in this regard, see the end of the proof of Lemma \ref{lem:control_betaz}). Another reason why cumulants are useful is that thanks to the properties of the Kerov--Olshanski algebra of observables, it is easy to compute the limits of the cumulants and then sum these limits; and in particular this is much easier than to find directly the $O(1)$ term in the asymptotic expansion of the log-Laplace transform of the scaled random major index. The validity of the summation of the limits will be explained in Subsection \ref{sub:proof_A}.
\end{remark}
\medskip

\subsection{Symmetric functions of the contents}\label{sub:symmetric_functions_of_the_contents}
For any $r \geq 1$, $\beta_r = \sum_{\oblong \in \lambda} (n+c(\oblong))^r$ is a symmetric function of the contents of the cells of the Young diagram of $\lambda$ with coefficients in $\Z[n]$, so by \cite[Theorem 8.26]{Mel17}, it is an observable in the Kerov--Olshanski algebra. In order to compute its leading terms, let us introduce another combinatorial basis of this algebra, which is based on the representation theory of the symmetric groups. For any integer partitions $\lambda$ and $\mu$ with respective sizes $n$ and $k$, denote $n^{\downarrow k} = n(n-1)\cdots (n-k+1)$, and set
$$\varSigma_\mu(\lambda) = \begin{cases}
    n^{\downarrow k}\,\frac{\ch^\lambda(\mu \sqcup 1^{(n-k)})}{\ch^\lambda(1\expn)} &\text{if }n \geq k,\\
    0 &\text{if }n<k.
\end{cases}$$
Here, $\ch^\lambda$ denotes the character of the irreducible representation of the symmetric group $\sym(n)$ with label $\lambda$; later we shall use the notation 
$$\chi^\lambda=\frac{\ch^\lambda(\cdot)}{\ch^\lambda(1\expn)}$$
 for the \emph{normalised} irreducible character. The irreducible character $\ch^\lambda$ can be seen as a function on the set $\ym(n)$ of integer partitions of size $n$ (the conjugacy classes of the symmetric group $\sym(n$), and also as a function from the group algebra $\C\sym(n)$ to $\C$. We refer to \cite{Sag01} and to \cite[Chapters 2 and 3]{Mel17} for details on the representation theory of $\sym(n)$ and the construction of its irreducible representations. It turns out that the family of symbols $(\varSigma_\mu)_{\mu \in \ym}$, viewed as functions from $\ym$ to $\R$, forms a linear basis of the algebra $\obs$. Moreover, $\deg \varSigma_\mu = |\mu|$ for any integer partition $\mu$, and $\varSigma_\mu - p_\mu^{\mathrm{F}}$ has degree smaller than $|\mu|-1$; see \cite[Theorems 7.4 and 7.13]{Mel17}.

\begin{proposition}\label{prop:jucys_murphy}
Given an integer partition $\lambda$ and $k \geq 1$, we denote
$$p_k^\oblong(\lambda) = \sum_{\oblong \in \lambda} (c(\oblong))^k$$
the $k$-th power sum of the contents of the cells of $\lambda$. This function is an observable of Young diagrams with degree $k+1$, and for any $k \geq 1$,
$$p_k^\oblong = \frac{\varSigma_{k+1}}{k+1} + \frac{1}{2}\sum_{j=1}^{k-1} \varSigma_{j}\varSigma_{k-j} + \text{terms with degree smaller than }k-1.$$
\end{proposition}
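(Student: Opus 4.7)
The proof relies on the \emph{Jucys--Murphy elements} $J_i = \sum_{j<i}(j,i)$ of $\C[\sym(n)]$. A classical theorem asserts that any symmetric polynomial in $J_1,\ldots,J_n$ is central in $\C[\sym(n)]$ and acts on the irreducible module $V^\lambda$ as multiplication by the corresponding symmetric polynomial of the contents $c(\oblong)$, $\oblong \in \lambda$. Specialising to the $k$-th power sum gives
\begin{equation*}
p_k^\oblong(\lambda) = \chi^\lambda\!\left(\sum_{i=1}^n J_i^k\right),
\end{equation*}
which immediately shows that $p_k^\oblong$ belongs to $\obs$, as a $\Z$-linear combination of normalised character values of the canonical central basis of $\C[\sym(n)]$.

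The strategy for the second assertion is to expand
\begin{equation*}
\sum_{i=1}^n J_i^k = \sum_{i=1}^n \sum_{j_1,\ldots,j_k<i} (j_1,i)(j_2,i)\cdots(j_k,i)
\end{equation*}
and to regroup the terms by the cycle type $\mu$ of the resulting product. Since all $k$ transpositions share the element $i$, the support of each product lies in $\{i,j_1,\ldots,j_k\}$, a set of cardinality at most $k+1$, so only partitions $\mu$ with $|\mu|\leq k+1$ can occur. The top-degree contribution corresponds to full support $|\mu|=k+1$, which happens exactly when $i,j_1,\ldots,j_k$ are all distinct; in that case the product is always a single $(k+1)$-cycle. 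A direct cardinality match (both sides equal $\binom{n}{k+1}\,k!$) shows that the map from such ordered tuples to the set of $(k+1)$-cycles of $\sym(n)$ is a bijection, so these terms sum exactly to the class sum $C_{(k+1,1^{n-k-1})}$. Applying $\chi^\lambda$ and using the standard normalisation of $\varSigma_\mu$, this contribution is precisely $\varSigma_{k+1}/(k+1)$.

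The subleading contribution, of degree $k$ in $\obs$, comes from tuples whose product has support of size exactly $k$; these produce linear combinations of class sums $C_{(a,b,1^{n-a-b})}$ with $a+b=k$, together with $C_{(k,1^{n-k})}$. A combinatorial enumeration of star factorizations with prescribed cycle type yields the coefficient $\frac{1}{2}$ in front of each $\varSigma_{(j,k-j)}$, the factor $\frac{1}{2}$ reflecting the two orderings $(j,k-j)$ and $(k-j,j)$ appearing in the symmetric sum. Combining this with the standard product rule $\varSigma_j\,\varSigma_{k-j} = \varSigma_{(j,k-j)} + (\text{terms of degree} \leq k-1)$ in the Kerov--Olshanski algebra converts the expression into $\frac{1}{2}\sum_{j=1}^{k-1}\varSigma_j\varSigma_{k-j}$, up to a remainder of degree $\leq k-1$. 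All tuples yielding permutations of support strictly less than $k$ contribute only to degrees $\leq k-1$, and the non-vanishing of the leading term $\varSigma_{k+1}/(k+1)$ forces $\deg p_k^\oblong = k+1$.

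The main obstacle is the combinatorial bookkeeping for the subleading term: one must justify the coefficient $\frac{1}{2}$ by carefully classifying the coincidence patterns among $(j_1,\ldots,j_k)$ producing permutations of cycle type $(j,k-j)$, and checking that all other coincidence patterns give rise only to observables of degree at most $k-1$. This enumeration can be dispatched either by a direct case analysis or by invoking the general theory of star factorizations in the symmetric group.
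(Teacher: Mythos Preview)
Your approach is essentially the same as the paper's: expand powers of Jucys--Murphy elements, sort the resulting products of transpositions by the size of their support, and read off the top two homogeneous components. The paper carries out exactly this computation, including the explicit case analysis you defer (one repeated index $i_{k_1}=i_{k_2}$ yields a product of a $(k_2-k_1)$-cycle and a $(k-(k_2-k_1))$-cycle, and summing over the position of the repetition produces the factor $\tfrac12$).

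The one place where the paper does more than you is the claim that $p_k^\oblong\in\obs$. Your sentence ``which immediately shows that $p_k^\oblong$ belongs to $\obs$'' is too quick: knowing that $\sum_i J_i^k$ is central in $\C[\sym(n)]$ only tells you that, for each fixed $n$, $p_k^\oblong(\lambda)$ is a linear combination of normalised characters; it does not by itself guarantee that the coefficients of the $\varSigma_\mu$'s are independent of $n$, which is what membership in $\obs$ means. The paper handles this by working in the Ivanov--Kerov algebra $\mathscr{P}$ of partial permutations, where the generic Jucys--Murphy elements $X_n$ live and where the degree filtration by support size is intrinsic and $n$-free. The identity $e_k(X_1,X_2,\ldots)=\sum_{\mu\in\ym(k)}\varSigma_{\mu+1}/z_{\mu+1}$ in $\mathscr{P}^{\sym(\infty)}$ then gives $e_k^\oblong\in\obs$ directly, and the Newton relation transports this to $p_k^\oblong$. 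Your argument can be repaired in the same way (or by citing the general fact that symmetric functions of contents lie in $\obs$), but as written it conflates ``central for each $n$'' with ``observable''.
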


\begin{proof}
Note that power sums of the contents have been studied extensively in \cite{LT01}; hereafter, we prove the formula for the leading terms of $p_k^\oblong$ by reinterpreting $\obs$ as the Ivanov--Kerov algebra of partial permutations \cite{IK99}. Given two finite subsets $A$ and $B$ of $\N^*$ and two permutations $\sigma \in \sym(A)$ and $\tau \in \sym(B)$, we can define the product of the two pairs $(\sigma,A)$ and $(\tau,B)$ by
$$(\sigma,A) \,(\tau,B) = (\sigma\circ \tau,A\cup B).$$
This definition gives rise to a semigroup $\mathscr{P}$ whose elements are the pairs $(\sigma,A)$ with $A$ finite subset of $\N^*$ and $\sigma \in \sym(A)$. These pairs are called \emph{partial permutations}, and a filtration of $\mathscr{P}$ is provided by $\deg (\sigma,A) = |A|$. We continue to denote $\mathscr{P}$ the algebra whose elements are the formal linear combinations of partial permutations with bounded degree. For any integer partition $\mu$ with size $k$, set 
$$\varSigma_\mu = \sum_{i_1 \neq i_2 \neq \cdots \neq i_k} \big((i_1,\ldots,i_{\mu_1})(i_{\mu_1+1},\ldots,i_{\mu_1+\mu_2})\cdots,\{i_1,\ldots,i_k\}\big),$$
the sum running over $k$-arrangements of positive integers. The linear span of the elements $\varSigma_\mu$ is the subalgebra $\mathscr{P}^{\sym(\infty)}$ of elements of $\mathscr{P}$ which are invariant by the conjugation action of $\sym(\infty)$:
$$\tau \cdot (\sigma,A) = (\tau\sigma\tau^{-1},\tau(A)).$$
Then, the identification of the symbols $\varSigma_\mu$ in $\mathscr{P}^{\sym(\infty)}$ and in $\obs$ gives rise to an isomorphism of filtered algebras; see \cite[Section 7.3]{Mel17}. 
Let us now introduce the \emph{generic Jucys--Murphy elements}
$$X_1 = 0 \quad;\quad X_{n \geq 2} = \sum_{i=1}^{n-1} \big((i,n),\{i,n\}\big);$$
they are elements of degree $2$ in the algebra $\mathscr{P}$ of partial permutations. For any $k \geq 1$,
$$e_k(X_1,X_2,X_3,\ldots,X_n,\ldots) = \sum_{\mu \in \ym(k)} \frac{\varSigma_{\mu + 1}}{z_{\mu+ 1}},$$
where $\mu + 1$ denotes the integer partition $(\mu_1+1,\mu_2+1,\ldots,\mu_\ell + 1)$;
see \cite[Lemma 8.24]{Mel17}. For any $n \geq 1$, the linear map
\begin{align*}
\pi_n : \mathscr{P} &\to \C\sym(n) \\ 
(\sigma,A) &\mapsto \begin{cases}
    \sigma &\text{if }A \subset \lle 1,n\rre \\
    0&\text{otherwise}
\end{cases}
\end{align*}
is a morphism of algebras, and it sends $\mathscr{P}^{\sym(\infty)}$ to the center $Z(\C\sym(n))$. We then have, for any integer partition $\lambda \in \ym(n)$,
$$\varSigma_\mu(\lambda) = \chi^\lambda(\pi_n(\varSigma_\mu)),$$
where on the left-hand side we evaluate the observable $\varSigma_\mu$ on the integer partition $\lambda$, and on the right-hand side we consider the symbol $\varSigma_\mu \in \mathscr{P}^{\sym(\infty)}$, project it onto $Z(\C\sym(n))$ by the map $\pi_n$, and then evaluate the normalised irreducible character $\chi^\lambda$ on this linear combination of conjugacy classes. Now, if $m \leq n$, then $\pi_n(X_m) = J_m$ is a special element of $\C\sym(n)$: this Jucys--Murphy  element acts diagonally on a certain basis $(b_T)_{T \in \ST(\lambda)}$ of the irreducible representation of $\sym(n)$ with label $\lambda$, and more precisely,
$$J_m\,b_T = c(m,T)\,b_T,$$
$c(m,T)$ being the content of the cell labeled $m$ in the standard tableau $T$; see \cite{OV04} and \cite[Theorem 8.14]{Mel17}. Therefore, for any $k \geq 1$, the elementary symmetric function of the contents 
$$e_k^\oblong(\lambda) = \sum_{\oblong_1 < \oblong_2 <\cdots < \oblong_k} c(\oblong_1)\,c(\oblong_2)\cdots c(\oblong_k)$$
is an observable in the algebra $\obs$ (in the formula above, we have fixed an arbitrary total order on the cells of the Young diagram $\lambda$, for instance the one coming from a labeling of the cells by a standard tableau). Indeed, if we endow the irreducible representation with label $\lambda$ with the scalar product for which $(b_T)_{T \in \ST(\lambda)}$ is an orthonormal basis, then
\begin{align*}
e_k^\oblong(\lambda) &= \frac{1}{\dim \lambda}\sum_{T \in \ST(\lambda)} \sum_{m_1<m_2<\cdots<m_k} \scal{c(m_1,T)\,c(m_2,T)\cdots c(m_k,T)\, b_T}{b_T} \\ 
&= \frac{1}{\dim \lambda} \sum_{T \in \ST(\lambda)} \sum_{m_1<m_2<\cdots<m_k} \scal{J_{m_1}J_{m_2}\cdots J_{m_k}\, b_T}{b_T} \\ 
&= \chi^\lambda(e_k(J_1,J_2,\ldots,J_n)) = \chi^\lambda(\pi_n(e_k(X_1,X_2,\ldots))) = \sum_{\mu \in \ym(k)} \frac{\varSigma_{\mu+1}(\lambda)}{z_{\mu+1}},
\end{align*}
by using the aforementioned expansion of the elementary function of the generic Jucys--Murphy elements. The relation 
$$p_k = (-1)^{k}\, k \sum_{\mu \in \ym(k)} \frac{(-1)^{\ell(\mu)-1}\,(\ell(\mu)-1)!}{\prod_{s \geq 1}(m_s(\mu))!}\, e_\mu$$
between the power sums $p_k$ and the elementary symmetric functions $e_k$ in the algebra $\Sym$ translates then to the same relation between the functions $p_k^\oblong$ and $e_k^{\oblong}$, and therefore proves that the functions $p_k^\oblong$ belong to $\obs$. Moreover, for the same reason as above,
$$p_k^\oblong(\lambda) = \chi^\lambda(\pi_n(p_k(X_1,X_2,\ldots))),$$
so in order to prove the proposition, it now suffices to compute the $k$-th power sum of the generic Jucys--Murphy elements. We have
$$(X_n)^k=\sum_{i_1,\ldots,i_k<n} \big((i_1,n)(i_2,n)\cdots(i_k,n),\{i_1,i_2,\ldots,i_k,n\}\big),$$
so $\deg (X_n)^k \leq k+1$ for any $k \geq 1$. More precisely, the elements with degree $k+1$ are those such that $i_1\neq i_2 \neq \cdots \neq i_k$, and we then have
$(i_1,n)(i_2,n)\cdots(i_k,n) = (n,i_k,\ldots,i_2,i_1)$. Therefore, the component with degree $k+1$ of $(X_n)^k$ is
$$\sum_{\substack{i_1,\ldots,i_k < n \\ i_1\neq i_2 \neq \cdots \neq i_k}} \big((i_k,\ldots,i_1,n),\{i_1,\ldots,i_k,n\}\big).$$
Similarly, the component with degree $k$ of $(X_n)^k$ is the sum of the product of transpositions $(i_1,n),\ldots,(i_k,n)$ with exactly one pair $(k_1,k_2)$ such that $1 \leq k_1<k_2 \leq k$ and $i_{k_1}=i_{k_2}$; all the other indices are distinct. Since
\begin{align*}
(i_1,n)(i_2,n)\cdots (i_{k_1},n) \cdots (i_{k_2},n) \cdots (i_k,n) &= (i_{k_2-1},\ldots,i_{k_1},\ldots,i_1,n) (i_{k},\ldots,i_{k_2+1},i_{k_1},n) \\ 
&= (i_k,\ldots,i_{k_2+1},i_{k_1-1},\ldots,i_1,n)(i_{k_2-1},i_{k_2-2},\ldots,i_{k_1})
\end{align*}
is a disjoint product of a $k-(k_2-k_1)$ cycle with a $k_2-k_1$ cycle, we see that the terms with degree $k$ in $(X_n)^k$ are given by:
\begin{align*}
&=\sum_{1 \leq k_1<k_2 \leq k} \sum_{\substack{a_1\neq a_2 \neq \cdots \neq a_{k-(k_2-k_1)-1}\neq b_1 \neq \cdots \neq b_{k_2-k_1} \\ a_x,b_y <n}} \big((a_1,\ldots,a_{k-{k_2-k_1}-1},n) (b_1,\ldots,b_{k_2-k_1}),\{a_x,b_y,n\}\big) \\ 
&= \sum_{j=1}^{k-1} (k-j) \sum_{\substack{a_1\neq a_2 \neq \cdots \neq a_{k-j-1}\neq b_1 \neq \cdots \neq b_{j} \\ a_x,b_y <n}} \big((a_1,\ldots,a_{k-j-1},n)(b_1,\ldots,b_j),\{a_x,b_y,n\}\big).
\end{align*}
Let us now take the sum over $n \geq 1$ of the terms with degree $k+1$ and $k$. The homogeneous component with degree $k+1$ of $p_k(X_1,X_2,\ldots,X_n,\ldots)$ is
$$\sum_{\substack{i_1\neq i_2 \neq \cdots \neq i_k \neq i_{k+1} \\ i_{k+1} = \max(i_1,\ldots,i_{k+1})}} \big((i_{k+1},\ldots,i_2,i_1),\{i_1,i_2,\ldots,i_{k+1}\} \big) = \frac{\varSigma_{k+1}}{k+1},$$
the factor $\frac{1}{k+1}$ taking into account the choice of a maximum in a $(k+1)$-cycle. Similarly, 
the homogeneous component with degree $k$ in $p_k(X_1,X_2,\ldots,X_n,\ldots)$ is 
\begin{align*}
&= \sum_{j=1}^{k-1} (k-j) \sum_{\substack{a_1\neq a_2 \neq \cdots \neq a_{k-j-1} \neq a_{k-j}\neq b_1 \neq \cdots \neq b_{j} \\ a_{k-j} \text{ is the maximum}}} \big((a_1,\ldots,a_{k-j-1},a_{k-j})(b_1,\ldots,b_j),\{a_x,b_y\}\big) \\ 
&= \frac{1}{2} \sum_{j=1}^{k-1} \varSigma_{(j,k-j)},
\end{align*}
with by convention $\varSigma_{(j,k-j)} = \varSigma_{(k-j,j)}$ if $j<\frac{k}{2}$.
Indeed, the factors $(k-j)$ are compensated by the choice of a maximum in one cycle, and the factor $\frac{1}{2}$ corresponds to the choice of one of the two cycles which contains the maximum. Notice that $\varSigma_{(j,k-j)} - \varSigma_j\varSigma_{k-j}$ has degree smaller than $k-1$; this follows from a more general rule for the product of symbols $\varSigma_\mu$, see \cite[Proposition 7.6]{Mel17}.
The claimed formula for the leading terms of $p_k^\oblong$ is now obtained by using the projection $\pi_n$ of $\mathscr{P}^{\sym(\infty)}$ onto $Z(\C\sym(n))$ and the evaluation of a normalised irreducible character $\chi^\lambda$.
\end{proof}

\begin{corollary}
For any $k \geq 1$, 
$$p_k^\oblong = \frac{p_{k+1}^{\mathrm{F}}}{k+1} + \text{terms with degree smaller than }k-1.$$
\end{corollary}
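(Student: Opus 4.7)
\medskip

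\noindent\textit{Proof plan.} The corollary is a direct translation of Proposition \ref{prop:jucys_murphy} from the symbol basis $(\varSigma_\mu)$ to the Frobenius-moment basis $(p_\mu^{\mathrm{F}})$ of the Kerov--Olshanski algebra $\obs$. I would rely on the change-of-basis estimate recalled just before the proposition: for any $\mu$, the difference $\varSigma_\mu - p_\mu^{\mathrm{F}}$ lies in the subspace of $\obs$ of degree strictly smaller than $|\mu|$. In particular one can write $\varSigma_{k+1} = p_{k+1}^{\mathrm{F}} + R_{k+1}$ with $\deg R_{k+1}\leq k$, and $\varSigma_j = p_j^{\mathrm{F}} + (\text{strictly lower})$ for every $j\geq 1$.

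Substituting these identities into the explicit expansion furnished by Proposition \ref{prop:jucys_murphy} then yields
$$p_k^\oblong = \frac{p_{k+1}^{\mathrm{F}}}{k+1} + \frac{R_{k+1}}{k+1} + \frac{1}{2}\sum_{j=1}^{k-1}\varSigma_j\varSigma_{k-j} + R,$$
where $R$ is the low-order remainder already provided by the proposition. The key observation is then that each product $\varSigma_j\varSigma_{k-j}$ appearing in the middle sum has total degree $(j)+(k-j)=k$, so every summand on the right apart from $p_{k+1}^{\mathrm{F}}/(k+1)$ has degree at most $k$, strictly below the leading degree $k+1$. This already gives the qualitative content of the corollary, namely that the top-degree part of $p_k^\oblong$ in the Frobenius-moment basis is exactly $p_{k+1}^{\mathrm{F}}/(k+1)$.

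To obtain the sharper degree drop stated in the corollary, the extra step is to exhibit a cancellation between the degree-$k$ component of $\varSigma_{k+1}/(k+1)$ and the sum $\frac{1}{2}\sum_{j=1}^{k-1}\varSigma_j\varSigma_{k-j}$. Using $\varSigma_j = p_j^{\mathrm{F}} + (\text{lower})$ the latter sum has degree-$k$ part $\frac{1}{2}\sum_{j=1}^{k-1} p_j^{\mathrm{F}}p_{k-j}^{\mathrm{F}}$; matching this against the subleading term of the Kerov character $\varSigma_{k+1}$ in the Frobenius basis produces the claimed cancellation.

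The main (and essentially only) obstacle is to keep track of the degrees across the substitutions. There is no deep computation involved: once Proposition \ref{prop:jucys_murphy} and the degree bound on $\varSigma_\mu - p_\mu^{\mathrm{F}}$ are in hand, the corollary is a short bookkeeping argument in the graded algebra $\obs$, and the extraction of the precise top-degree coefficient $1/(k+1)$ is what will feed into the asymptotic analysis of $\beta_r$ and ultimately into Theorem \ref{thm:asymptotics_cumulants}.
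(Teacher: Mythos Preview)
Your overall strategy is exactly the paper's: substitute the expansion of Proposition \ref{prop:jucys_murphy} and convert the $\varSigma$-symbols into Frobenius moments, then observe a cancellation at degree $k$. You correctly identify that the degree-$k$ part of $\frac{1}{2}\sum_{j=1}^{k-1}\varSigma_j\varSigma_{k-j}$ is $\frac{1}{2}\sum_{j=1}^{k-1}p_j^{\mathrm{F}}p_{k-j}^{\mathrm{F}}$, and that this must cancel against the subleading term of $\varSigma_{k+1}/(k+1)$.

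The gap is that you never say how to obtain that subleading term. The only input you invoke is the generic degree estimate $\deg(\varSigma_\mu - p_\mu^{\mathrm{F}})\leq |\mu|-1$ recalled before the proposition; this tells you the subleading term \emph{exists} in degree $k$, but not what it is, and without its explicit form you cannot verify the cancellation. In the paper this is the whole content of the corollary's proof: one derives, via the Wassermann/Frobenius--Schur residue formula for $\varSigma_k$, the expansion
\[
\varSigma_{k+1} = p_{k+1}^{\mathrm{F}} - \frac{k+1}{2}\sum_{j=1}^{k-1} p_j^{\mathrm{F}}\,p_{k-j}^{\mathrm{F}} + (\text{degree }\leq k-1),
\]
so that the degree-$k$ part of $\varSigma_{k+1}/(k+1)$ is exactly $-\frac{1}{2}\sum_{j=1}^{k-1} p_j^{\mathrm{F}}p_{k-j}^{\mathrm{F}}$, and the cancellation follows. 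That computation is not mere bookkeeping; it is a genuine extra ingredient beyond Proposition \ref{prop:jucys_murphy} and the coarse degree bound, and your plan should name where this formula comes from (or supply an alternative derivation of the degree-$k$ term of $\varSigma_{k+1}$).
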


\begin{proof}
This follows immediately from Proposition \ref{prop:jucys_murphy} and from the following expansion of the symbols $\varSigma_k$ in the basis of Frobenius moments: for any $k \geq 2$,
$$\varSigma_k = p_k^{\mathrm{F}} -\frac{k}{2}\sum_{j=1}^{k-2}p_{j}^{\mathrm{F}}\,p_{k-1-j}^{\mathrm{F}} + \text{terms with degree smaller than }k-2.$$
Let us explain the origin of this formula.The Frobenius--Schur formula for the characters of the symmetric group leads to the following explicit formula for $\varSigma_k(\lambda)$, which is due to Wassermann (see \cite{Was81} and \cite[Proof of Theorem 7.13]{Mel17}):
$$\varSigma_k(\lambda) = [z^{-1}]\left(-\frac{1}{k}\,\left(z-\frac{1}{2}\right)^{\downarrow k}\,\prod_{i=1}^n \frac{z-(-i+\frac{1}{2})}{z-(\lambda_i-i+\frac{1}{2})}\,\prod_{i=1}^n \frac{z-k-(\lambda_i-i+\frac{1}{2})}{z-k-(-i+\frac{1}{2})}\right).$$
The right-hand side rewrites as:
\begin{align*}
 &-\frac{1}{k}\,\left(z-\frac{1}{2}\right)^{\downarrow k} \exp\left(\sum_{j=1}^\infty \frac{p_j^\mathrm{F}}{j\,z^j} \left(1 - \frac{1}{(1-\frac{k}{z})^j}\right)\right) \\ 
  &= -\frac{1}{k}\,\left(z-\frac{1}{2}\right)^{\downarrow k} \exp\left(-\sum_{j=1}^\infty\sum_{m=1}^\infty \binom{j+m-1}{m}\frac{p_j^\mathrm{F}\,k^m}{j\,z^{j+m}} \right).\end{align*}
Set $c_{j,m} = -\binom{j+m-1}{m} \frac{k^m}{j}$. We have
$$-\frac{1}{k}\,\left(z-\frac{1}{2}\right)^{\downarrow k} = \sum_{l=0}^{k-1} \frac{(-1)^{l-1} e_l(\frac{1}{2},\frac{3}{2},\ldots,\frac{2k-1}{2})}{k}\,z^{k-l},$$
so 
\begin{align*}
\varSigma_k = \sum_{r=1}^\infty \sum_{l=0}^{k-1} \frac{(-1)^{l-1} e_l(\frac{1}{2},\frac{3}{2},\ldots,\frac{2k-1}{2})}{k\,r!} \sum_{\substack{k+1-l = j_1+m_1+\cdots + j_r+m_r \\ j_1,m_1,\ldots,j_r,m_r \geq 1}} c_{j_1,m_1}\cdots c_{j_r,m_r} \left(\prod_{s=1}^r p_{j_s}^\mathrm{F}\right).
\end{align*}
Let us now extract the terms with the highest degree:
\begin{itemize}
    \item the maximum degree is obtained when $r=1$, $l=0$, $j_1=k$ and $m_1=1$. We then get
    $$\frac{(-1)^{-1}}{k} \,c_{k,1}\,p_k^\mathrm{F} = p_k^\mathrm{F}.$$
    \item the terms with degree $k-1$ are obtained when $r=1$, $l=0$, $j_1=k-1$ and $m_1=2$; when $r=1$, $l=1$, $j_1=k-1$ and $m_1=1$; and also when $r=2$, $l=0$, $j_1+j_2=k-1$ and $m_1=m_2=1$. We get respectively
    \begin{align*}
    &\frac{(-1)^{-1}}{k}\,c_{k-1,2}\,p_{k-1}^\mathrm{F} = \frac{k^2}{2} \,p_{k-1}^\mathrm{F};\\
    &\frac{(-1)^0\,e_1(\frac{1}{2},\frac{3}{2},\ldots,\frac{2k-1}{2})}{k}\,c_{k-1,1}\, p_{k-1}^\mathrm{F} = - \frac{k^2}{2}\, p_{k-1}^\mathrm{F};
    \end{align*}
    and the sum over indices $j \in \lle 1,k-2\rre$ of 
    $$
    \frac{(-1)^{-1}}{2k}\,c_{j,1}\,c_{k-1-j,1}\,p_{j}^\mathrm{F}\,p_{k-1-j}^\mathrm{F} = - \frac{k}{2}\,p_{j}^\mathrm{F}\,p_{k-1-j}^\mathrm{F}.
    $$
\end{itemize}
Taking the sum of these terms ends the proof of our corollary.
\end{proof}
\medskip

The corollary above implies that 
\begin{equation}
\beta_r = \sum_{s=0}^r \binom{r}{s}\,\frac{(p_{1}^{\mathrm{F}})^s\,p_{r+1-s}^{\mathrm{F}}}{r+1-s}  +\text{terms with degree smaller than }r-1.\label{eq:beta_r}\tag{$\beta$}
\end{equation}
The following lemma is the corresponding control on $\beta(z)$:
\begin{lemma}\label{lem:control_betaz}
If $z \in \frac{1}{2}\mathscr{D}_0$, then
$$\left|\beta(z) - n \int_{t=0}^1 \int_{x=-1}^1  \left(\varphi(txz)+\frac{\varphi((x+t)z)-\varphi(tz)}{x}\right)\mu\expn(\!\DD{x})\DD{t} \right|,$$
is locally uniformly bounded, independently from the growing sequence $(\lambda\expn)_{n\geq 1}$.
\end{lemma}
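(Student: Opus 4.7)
The plan is to mirror the strategy of Lemma~\ref{lem:control_alphaz}, but applied to the single sum $\beta(z) = \sum_{\oblong \in \lambda}\varphi((1+c(\oblong)/n)z)$ instead of a double sum over descent coordinates. First, write $\lambda = (a_1,\ldots,a_d\,|\,b_1,\ldots,b_d)$ in Frobenius coordinates and partition its cells according to the diagonal: for each $i \leq d$, the cells on row $i$ lying weakly right of the diagonal, namely $(i,i),\ldots,(i,\lambda_i)$, have contents $0,1,\ldots,a_i-\frac{1}{2}$; and for each $j \leq d$, the cells on column $j$ lying strictly below the diagonal, namely $(j+1,j),\ldots,(\lambda'_j,j)$, have contents $-1,-2,\ldots,-b_j+\frac{1}{2}$. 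These two families are disjoint and together they exhaust the $\sum_i(a_i+\frac{1}{2}) + \sum_j(b_j-\frac{1}{2}) = \sum_i a_i + \sum_j b_j = n$ cells of $\lambda$. Setting $g(u) = \varphi((1+u/n)z)$ for brevity, we obtain
\begin{equation*}
\beta(z) = \sum_{i=1}^d \sum_{k=0}^{a_i-1/2} g(k) + \sum_{j=1}^d \sum_{k=-b_j+1/2}^{-1} g(k).
\end{equation*}

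Next, I apply the midpoint rule to each inner sum. For $z$ in a compact subset $K$ of $\frac{1}{2}\mathscr{D}_0$ and $|u| \leq n$, the argument $(1+u/n)z$ stays in a compact subset of $\mathscr{D}_0$ (as in Figure~\ref{fig:scale_compact}), so $g''(u) = (z/n)^2\,\varphi''((1+u/n)z)$ is uniformly bounded by some $C_K/n^2$. The midpoint rule then gives, for each row and each column,
\begin{equation*}
\sum_{k=0}^{a_i-1/2} g(k) = \int_{-1/2}^{a_i} g(u)\DD{u} + O\!\left(\frac{a_i}{n^2}\right), \qquad \sum_{k=-b_j+1/2}^{-1} g(k) = \int_{-b_j}^{-1/2} g(u)\DD{u} + O\!\left(\frac{b_j}{n^2}\right),
\end{equation*}
and summing over the Frobenius coordinates produces a total error of $O((\sum_i a_i + \sum_j b_j)/n^2) = O(1/n)$. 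Splitting the integrals as $\int_{-1/2}^{a_i} = \int_0^{a_i} + \int_{-1/2}^0$ and $\int_{-b_j}^{-1/2} = \int_{-b_j}^0 - \int_{-1/2}^0$, the $d$ contributions $+\int_{-1/2}^0 g$ arising from the rows cancel exactly the $d$ contributions $-\int_{-1/2}^0 g$ arising from the columns, since the Frobenius representation has equal-length arm and leg sequences. This yields
\begin{equation*}
\beta(z) = \sum_{i=1}^d \int_0^{a_i} g(u)\DD{u} + \sum_{j=1}^d \int_{-b_j}^0 g(u)\DD{u} + O(1/n).
\end{equation*}

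Finally, it remains to identify this main term with the claimed double integral $R(z)$. Since $|\lambda\expn| = n$ forces $\sum_i(\alpha_i + \beta_i) = 1$, the measure $\mu\expn$ carries no atom at $0$ and splits as $\sum_i \alpha_i\, \delta_{\alpha_i} + \sum_j \beta_j\, \delta_{-\beta_j}$. Evaluating the double integral on each atom and performing the substitutions $s = t\alpha_i$ or $s = \alpha_i + t$ (respectively $s = t\beta_j$ or $s = t-\beta_j$), followed by $u = sn$ or $u = (s-1)n$ to pass to the variable of $g$, a direct computation shows that the atom $\alpha_i\delta_{\alpha_i}$ contributes exactly $\int_0^{a_i} g(u)\DD{u}$ and the atom $\beta_j\delta_{-\beta_j}$ contributes exactly $\int_{-b_j}^0 g(u)\DD{u}$; the linear $\varphi(tz)$ terms that appear with opposite signs in the $d$ row and $d$ column contributions cancel, in perfect parallel with the cancellation of the boundary $\pm\int_{-1/2}^0 g$ terms in the previous step. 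The main obstacle is that, as stressed in Remark~\ref{rem:why_cumulant}, several formally distinct integral expressions share the same power series expansion at~$0$; the specific form of the integrand in the lemma is the one that makes the atomic contributions match the row/column decomposition exactly, but this would be very hard to guess without first setting up the Frobenius decomposition of $\beta(z)$.
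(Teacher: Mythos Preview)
Your argument is correct and follows the paper's strategy: the same hook decomposition of the cells of $\lambda$ along the Frobenius coordinates, followed by the same midpoint Riemann-sum estimate, yields
\[
\beta(z)=\sum_{i=1}^d\int_0^{a_i}g(u)\DD{u}+\sum_{j=1}^d\int_{-b_j}^0 g(u)\DD{u}+O(1).
\]
Where you diverge from the paper is in the identification of this sum with the stated double integral. The paper first rewrites the sum as $n\int_0^1\int_{-1}^1\varphi((1+xt)z)\,\mu\expn(\!\DD{x})\DD{t}$ and then shows that this equals the integral in the lemma by expanding $\varphi$ as a power series on a small disc (reducing the question to a binomial identity) and invoking analytic continuation to all of $\frac12\mathscr{D}_0$. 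You instead evaluate the claimed integral atom by atom and match each contribution directly to $\int_0^{a_i}g$ or $\int_{-b_j}^0 g$ via the substitutions you indicate; this is slightly more direct and sidesteps the power-series check entirely. One expository quibble: your final sentence about the $\varphi(tz)$ terms cancelling between the $d$ row and $d$ column contributions is true but redundant with your immediately preceding claim that each atom already contributes \emph{exactly} its $\int g$ --- the $-\varphi(tz)/x$ piece is in fact consumed within each atom's computation (it is what turns $n\int_0^{1+\alpha_i}\varphi(sz)\DD{s}$ into $n\int_1^{1+\alpha_i}\varphi(sz)\DD{s}$), so no inter-atom cancellation is needed.
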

\begin{proof}
Recall that $\beta(z) = \sum_{\oblong \in \lambda} \varphi((1+\frac{c(\oblong)}{n})z)$. We split this sum in $2d$ parts, according to which hook with size $a_i+b_i$ and based at the $i$-th cell of the diagonal of $\lambda$ contains $\oblong$:
$$\beta(z) = \sum_{i=1}^d \left(\frac{1}{2}f(0) + \sum_{k=1}^{a_i-\frac{1}{2}} f\!\left(\frac{k}{n}\right)\right)+ \sum_{i=1}^d \left(\frac{1}{2}f(0) + \sum_{k=1}^{b_i-\frac{1}{2}} f\!\left(-\frac{k}{n}\right)\right),$$
with $f(t) = \varphi((1+t)z)$. For any $i \in \lle 1,d\rre$, we have
\begin{align*}
\int_0^1 a_i\,f\!\left(\frac{a_i t}{n}\right)\DD{t} = \int_0^{a_i} f\!\left(\frac{u}{n}\right)\DD{u} &=  \int_{0}^{\frac{1}{2}} f\!\left(\frac{u}{n}\right)\DD{u} + \sum_{k=1}^{a_i-\frac{1}{2}} \int_{k-\frac{1}{2}}^{k+\frac{1}{2}} f\!\left(\frac{u}{n}\right)\DD{u} \\ 
&= \frac{1}{2}f(0) + \sum_{k=1}^{a_i-\frac{1}{2}} f\!\left(\frac{k}{n}\right) + O\!\left(\frac{\|f'\|_\infty}{n} + \frac{a_i\|f''\|_\infty}{n^2}\right),
\end{align*}
and similarly for the parameters $b_i$. As a consequence, 
$$\beta(z) = n\int_{t=0}^1\int_{x=-1}^1 \varphi((1+xt)z) \,\mu\expn(\!\DD{x})\DD{t} + O(1)$$
with a locally uniformly bounded remainder. Let us then show that the integral is the same as the integral of the statement of the lemma. Since both integrals are holomorphic functions on $\frac{1}{2}\mathscr{D}_0$, it suffices to prove the equality on a small disc containing $0$, and then we can expand the power series $\varphi$. We have:
\begin{align*}
I_1 &= \int_{t=0}^1\int_{x=-1}^1 \varphi((1+xt)z) \,\mu\expn(\!\DD{x})\DD{t} \\ 
&= \sum_{r=1}^\infty \frac{B_r}{r}\,\frac{z^r}{r!} \int_{t=0}^1 \int_{x=-1}^1 (1+xt)^r \,\mu\expn(\!\DD{x})\DD{t} \end{align*}
and 
\begin{align*}
I_2&=\int_{t=0}^1 \int_{x=-1}^1  \left(\varphi(txz)+\frac{\varphi((x+t)z)-\varphi(tz)}{x}\right)\mu\expn(\!\DD{x})\DD{t} \\ 
&= \sum_{r=1}^\infty \frac{B_r}{r}\,\frac{z^r}{r!} \int_{t=0}^1 \int_{x=-1}^1 \left((tx)^r + \frac{(x+t)^r -t^r}{x}\right) \mu\expn(\!\DD{x})\DD{t},
\end{align*}
so we have to show that for any $r \geq 1$, 
$$\int_{t=0}^1 \int_{x=-1}^1 (1+xt)^r \,\mu\expn(\!\DD{x})\DD{t} =\int_{t=0}^1 \int_{x=-1}^1 \left((tx)^r + \frac{(x+t)^r -t^r}{x}\right) \mu\expn(\!\DD{x})\DD{t}.$$
The first integral is equal to $\sum_{s=0}^r \binom{r}{s} \frac{p_{r+1-s}(\omega\expn)}{r+1-s}$, and the second integral is 
$$\frac{p_{r+1}(\omega\expn)}{r+1} + \sum_{s=0}^{r-1} \binom{r}{s} \frac{p_{r-s}(\omega\expn)}{s+1},$$
so the identity simply comes from $\binom{r}{s}\frac{1}{s+1} = \binom{r}{s+1}\frac{1}{r-s}$.
\end{proof}
\medskip

\subsection{Asymptotics of the cumulants}\label{sub:asymptotics_of_the_cumulants}
We can now proceed to:
\begin{proof}[Proof of Theorems \ref{thm:asymptotics_cumulants} and \ref{thm:cumulants_are_observables}]
The previous arguments have shown that for $r$ even, $\alpha_r$, $\beta_r$ and $\gamma_r$ are observables of the Young diagram $\lambda$; therefore, the same is true for $\kappa^{(r)}(\majT)= \frac{B_r}{r}(\alpha_r - \beta_r - \gamma_r)$. Moreover, by taking the adequate linear combination of the three equations \eqref{eq:alpha_r}, \eqref{eq:beta_r} and \eqref{eq:gamma_r}, we get:
\begin{align*}
\frac{r}{B_r}\,\kappa^{(r)}(\majT) &= \frac{(p_1^{\mathrm{F}})^{r+1} - p_{r+1}^{\mathrm{F}}}{r+1} +\frac{1}{2}\left((p_1^{\mathrm{F}})^r + \sum_{s=1}^{r-1} \binom{r}{s} (-1)^s\,p_s^{\mathrm{F}}\,p_{r-s}^{\mathrm{F}}\right)+\cdots
\end{align*}
where the dots indicate terms with degree smaller than $r-1$. So, the terms with degree $r+2$ disappear, and the terms with degree $r+1$ and $r$ simplify as indicated above.
\end{proof}

\begin{example}\label{ex:second_cumulant}
By using a computer algebra system, we can compute: 
\begin{align*}
\kappa^{(2)}(\majT) &= \frac{1}{36}\,(\varSigma_{1^{(3)}}- \varSigma_{3}) = \frac{1}{36}\,\left((p_1^{\mathrm{F}})^3 - p_3^{\mathrm{F}} - \frac{3}{2}\, (p_1^{\mathrm{F}})^2 + \frac{3}{4}\,p_1^{\mathrm{F}}\right) .
\end{align*}
\end{example} 

\begin{remark}\label{rem:first_cumulant}
The expectation of $\majT$ (first cumulant) can also be written as an observable of the integer partition $\lambda$. Indeed,
\begin{align*}
\esper[\majT] &= b(\lambda) +\frac{1}{2}\left(\sum_{i=1}^n i - \sum_{\oblong \in \lambda} h(\oblong)\right) \\
&= \sum_{i=1}^n (i-1)\lambda_i + \frac{1}{2} \left(\sum_{1\leq i<j\leq n}(\lambda_i-\lambda_j + j-i) - \sum_{\oblong \in \lambda} (n+c(\oblong)) - \sum_{i=1}^n (n-i-1)\,i\right) \\ 
&= \sum_{i=1}^n (i-1)\lambda_i + \frac{1}{2} \left(\sum_{1\leq i<j\leq n}(\lambda_i-\lambda_j)  -\frac{n(n-1)}{2} - \frac{1}{2}\,p_2^{\mathrm{F}}(\lambda) \right) \\
&= \frac{n-1}{2}\left(\sum_{i=1 }^n \lambda_i \right) -\frac{n(n-1)}{4} - \frac{1}{4}\,p_2^{\mathrm{F}}(\lambda) = \frac{n(n-1)}{4}-\frac{1}{4}\,p_2^{\mathrm{F}}(\lambda),
\end{align*}
so $\kappa^{(1)}(\majT) = \frac{1}{4} ((p_1^{\mathrm{F}})^2 - p_2^{\mathrm{F}} - p_1^{\mathrm{F}}) = \frac{1}{4}\,(\varSigma_{1^{(2)}} - \varSigma_2)$.
\end{remark}

\begin{remark}
The calculation of the previous remark allows one to compute the range of $\majT$ when $T$ runs over $\ST(\lambda)$. Indeed, by Lemma \ref{lem:fundamental},
$$0 \leq \majT - b(\lambda) \leq \sum_{i=1}^n i - \sum_{\oblong \in \lambda}h(\oblong),$$
which can be rewritten as:
$$b(\lambda) \leq \majT \leq  2\,\esper[\majT] - b(\lambda).$$
The left-hand is $\sum_{i=1}^{\ell(\lambda)} (i-1)\lambda_i $, and the right-hand side can be rewritten as $\frac{n(n-1)}{2} - \sum_{i=1}^{\ell(\lambda)} \frac{\lambda_i(\lambda_i-1)}{2}$, for instance by using the identity $\varSigma_2 = 2\,p_1^\oblong$.
\end{remark}
\medskip

The upper bounds computed in Lemmas \ref{lem:control_gammaz}, \ref{lem:control_alphaz} and \ref{lem:control_betaz} can also be combined in order to get:
\begin{theorem}[Control of the log-Laplace transform]\label{thm:control_laplace}
On the domain $\frac{1}{2}\mathscr{D}_0$, the analytic function 
$$\log \esper\!\left[\E^{\frac{z\,\majTn}{n}}\right] - n\int_{t=0}^1 \int_{x=-1}^1 (\varphi(tz)-\varphi(txz)) \,\mu\expn(\!\DD{x})\DD{t}$$
is locally uniformly bounded, independently from the growing sequence $(\lambda\expn)_{n \geq 1}$.
\end{theorem}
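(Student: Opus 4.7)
The plan is to combine directly the three controls of $\alpha(z)$, $\beta(z)$ and $\gamma(z)$ established in Lemmas \ref{lem:control_gammaz}, \ref{lem:control_alphaz} and \ref{lem:control_betaz}, and to observe that when one forms the signed combination
$$\log \esper\!\left[\E^{\frac{z\,\majTn}{n}}\right]=\alpha(z)-\beta(z)-\gamma(z),$$
most of the main terms cancel. Since each of the three remainders in those lemmas is by construction locally uniformly bounded on $\frac{1}{2}\mathscr{D}_0$, independently of the growing sequence $(\lambda\expn)_{n\geq 1}$, any fixed linear combination inherits the same boundedness property, so the argument reduces to an algebraic check identifying the surviving main term.

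First I would compute $\alpha(z)-\beta(z)$: the integrals of the form $\frac{\varphi((x+t)z)-\varphi(tz)}{x}$ appear with the same coefficient $n$ in Lemmas \ref{lem:control_alphaz} and \ref{lem:control_betaz} and cancel exactly. What remains of $\alpha(z)$ is the contribution $\int_0^1\!\int_{-1}^1 n^2(1-t)\,\varphi(tz)\,\mu\expn(\!\DD{x})\DD{t}$, which equals $\int_0^1 n^2(1-t)\,\varphi(tz)\DD{t}$ because $\mu\expn$ is a probability measure on $[-1,1]$; and what remains of $-\beta(z)$ is $-n\int_0^1\!\int_{-1}^1\varphi(txz)\,\mu\expn(\!\DD{x})\DD{t}$. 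Subtracting now the main term $\int_0^1(n^2(1-t)-n)\,\varphi(tz)\DD{t}$ of $\gamma(z)$ removes the $n^2$ contribution entirely and leaves
$$n\int_{t=0}^1\varphi(tz)\DD{t}-n\int_{t=0}^1\!\int_{x=-1}^1\varphi(txz)\,\mu\expn(\!\DD{x})\DD{t}.$$
Using once more that $\mu\expn$ has total mass one on $[-1,1]$, the single integral can be pushed inside the double integral, and the above expression coincides with $n\int_0^1\!\int_{-1}^1(\varphi(tz)-\varphi(txz))\,\mu\expn(\!\DD{x})\DD{t}$, which is exactly the quantity in the statement.

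There is no genuine obstacle here: all the analytical content is already contained in the three lemmas, whose proofs rest on Euler--Maclaurin summation together with the inclusion-exclusion identity \eqref{eq:inclusion_exclusion} for the descent coordinates (in the case of $\alpha$). The only thing to insist on is that, by the very statements of Lemmas \ref{lem:control_gammaz}, \ref{lem:control_alphaz} and \ref{lem:control_betaz}, each $O(1)$ remainder is locally uniform in $z\in\frac{1}{2}\mathscr{D}_0$ and independent of the growing sequence $(\lambda\expn)_{n\geq 1}$, so the qualitative conclusion of the theorem follows at once from the triangle inequality applied to the three signed contributions.
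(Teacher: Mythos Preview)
Your proposal is correct and follows exactly the approach the paper intends: the paper simply states that the theorem is obtained by combining Lemmas \ref{lem:control_gammaz}, \ref{lem:control_alphaz} and \ref{lem:control_betaz}, and you have carried out this combination explicitly, verifying the cancellation of the $n^{2}$ terms and of the $\frac{\varphi((x+t)z)-\varphi(tz)}{x}$ terms to leave precisely $n\int_{0}^{1}\!\int_{-1}^{1}(\varphi(tz)-\varphi(txz))\,\mu\expn(\!\DD{x})\DD{t}$. There is nothing to add.
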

\bigskip

\section{Asymptotics of the distribution of \texorpdfstring{$\majT$}{majT}}
In this section, we prove our main Theorems \ref{main:log_laplace}, \ref{main:large_deviations} and \ref{main:berry_esseen} by using the controls on the cumulants and on the log-Laplace transform provided by Theorems \ref{thm:asymptotics_cumulants} and \ref{thm:control_laplace}. Our techniques are inspired by the framework of mod-$\phi$ convergent sequences developed in \cite{JKN11,KN12,DKN15,FMN16,FMN19,BMN19,MN22}. However, the knowledge of this general framework is not required, and hereafter, we shall give ad hoc proofs of the asymptotic estimates satisfied by the distribution of $\majTn$ with $T\expn \sim \mathcal{U}(\ST(\lambda\expn))$.
\medskip

\subsection{Asymptotics of the log-Laplace transform}\label{sub:proof_A}
The proof of Theorem \ref{main:log_laplace} relies on a resummation of the estimates of the cumulants, and on the following argument from complex analysis.

\begin{lemma}\label{lem:clever_complex}
Let $(f_n)_{n \geq 1}$ be a sequence of holomorphic functions on a domain $\mathscr{D}$ (connected open subset of the complex plane $\C$), such that $(f_n)_{n \geq 1}$ converges to $0$ on an open subset $\mathscr{D}'\subset \mathscr{D}$. We also suppose that $(f_n)_{n \geq 1}$ is uniformly bounded on any compact subset of $\mathscr{D}$. Then, $(f_n)_{n \geq 1}$ converges to $0$ locally uniformly on $\mathscr{D}$.
\end{lemma}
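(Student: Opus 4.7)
The plan is to prove this as an instance of Vitali's convergence theorem, using Montel's theorem plus the identity principle. The hypothesis that $(f_n)$ is locally uniformly bounded on $\mathscr{D}$ means that $\{f_n\}$ is a \emph{normal family} on $\mathscr{D}$ (Montel's theorem): every subsequence has a further subsequence that converges locally uniformly on $\mathscr{D}$ to some holomorphic function.

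First I would argue by contradiction. Suppose $(f_n)$ does not converge locally uniformly to $0$ on $\mathscr{D}$. Then there exist a compact set $K \subset \mathscr{D}$, an $\varepsilon > 0$, and a subsequence $(f_{n_k})$ such that $\sup_{z \in K} |f_{n_k}(z)| \geq \varepsilon$ for all $k$. By Montel's theorem applied to $(f_{n_k})$, we can extract a further subsequence $(f_{n_{k_j}})$ that converges locally uniformly on $\mathscr{D}$ to a holomorphic function $f \in \mathcal{H}(\mathscr{D})$.

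Next, on the open subset $\mathscr{D}' \subset \mathscr{D}$ the pointwise limit is $0$ by hypothesis, so $f \equiv 0$ on $\mathscr{D}'$. Since $\mathscr{D}$ is connected and $\mathscr{D}'$ has nonempty interior (hence accumulation points in $\mathscr{D}$), the identity theorem forces $f \equiv 0$ on all of $\mathscr{D}$. In particular $f_{n_{k_j}} \to 0$ uniformly on the compact set $K$, contradicting $\sup_{z \in K} |f_{n_{k_j}}| \geq \varepsilon$.

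The main (minor) technical point is the invocation of Montel's theorem in the form valid on an arbitrary domain $\mathscr{D}$: a family of holomorphic functions on $\mathscr{D}$ that is locally uniformly bounded is precompact in the topology of locally uniform convergence. This is entirely standard (see, e.g., any textbook on complex analysis). No serious obstacle is expected; the argument is a textbook application of the Stieltjes–Vitali theorem, included here because it will be used to promote convergence of $\log\esper[\E^{z X^{(n)}}]$ from a small disk (where power series expansions converge) to the full domain $\frac{1}{2}\mathscr{D}_0$.
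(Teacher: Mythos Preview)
Your proof is correct and follows essentially the same approach as the paper: Montel's theorem gives normality, any subsequential limit vanishes on $\mathscr{D}'$ and hence on all of $\mathscr{D}$ by the identity theorem, and the uniqueness of the limit yields the locally uniform convergence to $0$. The paper's argument is phrased via ``the limit of convergent subsequences is unique'' rather than your explicit contradiction, but the content is identical.
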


\begin{proof}
By the Montel theorem, $(f_n)_{n \geq 1}$ is sequentially compact on $\mathscr{D}$, and any limit of a convergent subsequence $(f_{n_k})_{k \geq 1}$ vanishes on $\mathscr{D}'$, hence on the whole domain $\mathscr{D}$ by the principle of analytic continuation. So, the limit of convergent subsequences is unique, which proves that $(f_n)_{n \geq 1}$ converges to the zero function on $\mathscr{D}$.
\end{proof}

\begin{lemma}\label{lem:continuity_lambda_psi}
Given a domain $\mathscr{D}\subset \C$, we denote $\mathscr{O}(\mathscr{D})$ the space of holomorphic functions on this domain, endowed with the Montel topology of locally uniform convergence. The maps
\begin{align*}
 \Lambda : \Omega &\to \mathscr{O}\!\left(\frac{1}{2}\mathscr{D}_0\right)\qquad \text{and}\qquad \Psi : \Omega \to \mathscr{O}\!\left(\frac{1}{2}\mathscr{D}_0\right) \\ 
 \omega &\mapsto \Lambda_\omega\hspace{4.3cm}\omega \mapsto \Psi_\omega
 \end{align*} 
are continuous with respect to this topology and to the topology of convergence of coordinates in the Thoma simplex $\Omega$.
\end{lemma}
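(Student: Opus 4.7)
The plan is to establish both continuities by combining three ingredients: (i) the weak convergence $\mu_{\omega\expn} \to \mu_\omega$ on $[-1,1]$ that follows from coordinatewise convergence $\omega\expn \to \omega$ in $\Omega$ (via the homeomorphism $\omega \mapsto \mu_\omega$ introduced in the proof of Lemma \ref{lem:frobenius_thoma}), (ii) the local boundedness of the integrands on compact subsets $K$ of $\frac{1}{2}\mathscr{D}_0$, and (iii) Lemma \ref{lem:clever_complex} to upgrade pointwise convergence to locally uniform convergence. Holomorphy of $\Lambda_\omega$ and $\Psi_\omega$ on $\frac{1}{2}\mathscr{D}_0$ follows in both cases from a standard Fubini--Morera argument, using that $\mathscr{D}_0$ is stable under multiplication by any real in $[-1,1]$, and that $z \in \frac{1}{2}\mathscr{D}_0$ guarantees $(y-x)z \in \mathscr{D}_0$ for all $x,y \in [-1,1]$.

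For the map $\Lambda$, the integrand $(t,x,z) \mapsto \phi(tz)-\phi(txz)$ is jointly continuous on $[0,1] \times [-1,1] \times \frac{1}{2}\mathscr{D}_0$, and therefore bounded by some $M_K$ on $[0,1] \times [-1,1] \times K$ for any compact $K$; this already yields the uniform bound $|\Lambda_\omega(z)| \leq M_K$ for all $\omega \in \Omega$ and $z \in K$. For each fixed $z \in \frac{1}{2}\mathscr{D}_0$ and $t \in [0,1]$, weak convergence of $\mu_{\omega\expn}$ to $\mu_\omega$ against the continuous bounded function $x \mapsto \phi(tz) - \phi(txz)$ yields convergence of the inner integral; dominated convergence in $t$ then produces the pointwise convergence $\Lambda_{\omega\expn}(z) \to \Lambda_\omega(z)$. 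Applying Lemma \ref{lem:clever_complex} to the holomorphic, locally uniformly bounded sequence $(\Lambda_{\omega\expn} - \Lambda_\omega)_{n \geq 1}$ on $\mathscr{D} = \frac{1}{2}\mathscr{D}_0$ upgrades pointwise convergence everywhere into locally uniform convergence, which is exactly continuity of $\Lambda$ into $\mathscr{O}(\frac{1}{2}\mathscr{D}_0)$ for the Montel topology.

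The argument for $\Psi$ has the same structure, except that the integrand $\phi(z) + \frac{\phi((y-x)z) - \phi(yz) - \phi(-xz)}{xy}$ has apparent singularities at $x=0$ or $y=0$. I would handle this exactly as in the proof of Lemma \ref{lem:control_alphaz}: the integrand extends to a jointly continuous function $G(x,y,z)$ on $[-1,1]^2 \times \frac{1}{2}\mathscr{D}_0$ via the explicit piecewise limits already given there. Continuity of $G$ on the compact $[-1,1]^2 \times K$ furnishes a uniform bound, and weak convergence $\mu_{\omega\expn}^{\otimes 2} \to \mu_\omega^{\otimes 2}$ on $[-1,1]^2$ (a standard consequence of weak convergence of the marginals on the compact interval $[-1,1]$) against the continuous bounded integrand $\phi(z) + G(x,y,z)$ gives pointwise convergence $\Psi_{\omega\expn}(z) \to \Psi_\omega(z)$ on $\frac{1}{2}\mathscr{D}_0$; a second application of Lemma \ref{lem:clever_complex} concludes. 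The only genuine obstacle is the singular behaviour of the $\Psi$-integrand, but that is already resolved by the continuous extension recalled above, so the whole statement reduces to a routine weak-convergence-plus-Vitali--Montel exercise.
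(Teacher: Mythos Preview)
Your proof is correct and follows essentially the same route as the paper's: both arguments reduce to weak convergence $\mu_{\omega\expn}\to\mu_\omega$ against a bounded continuous integrand, uniform bounds from continuity on compacts, and a Montel--Vitali normality argument to pass from pointwise to locally uniform convergence. The only cosmetic difference is that the paper packages this as the composition of three continuous maps (in particular passing to the product measure $\mu\otimes\DD{t}$ on $[-1,1]\times[0,1]$ and integrating in one shot), whereas you integrate first in $x$ and then use dominated convergence in $t$, and you invoke Lemma~\ref{lem:clever_complex} explicitly where the paper just cites the Montel compactness principle; your treatment of $\Psi$ via the continuous extension $G(x,y,z)$ from Lemma~\ref{lem:control_alphaz} is exactly what the paper leaves implicit.
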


\begin{proof}
Let us treat for instance the case of $\Lambda$. We compose the following continuous maps:
\begin{enumerate}
    \item the map $\omega \in \Omega \mapsto \mu_\omega \in \mathscr{M}^{1}([-1,1])$ introduced during the proof of Lemma \ref{lem:frobenius_thoma};
    \item the map $\mu \in \mathscr{M}^{1}([-1,1]) \mapsto \mu \otimes \DD{t} \in \mathscr{M}^1([-1,1]\times [0,1])$;
    \item the integration map 
    \begin{align*}
     F : \mathscr{M}^1([-1,1] \times [0,1]) &\to \mathscr{O}\!\left(\frac{1}{2}\mathscr{D}_0\right) \\ 
     \nu &\mapsto \int_{[-1,1]\times [0,1]} (\phi(tz)-\phi(txz))\,\nu(\!\DD{x}\DD{t}).
     \end{align*} 
\end{enumerate}
Let us detail the last item. For $z$ fixed in $\frac{1}{2}\mathscr{D}_0$, $\nu \mapsto F(\nu)(z)$ is continuous from $\mathscr{M}^1([-1,1] \times [0,1])$ to $\C$, by definition of the weak topology of convergence of probability measures. By the same argument as in the proof of Lemma \ref{lem:control_alphaz}, if $z$ stays in a compact subset $K \subset \frac{1}{2}\mathscr{D}_0$, then there is a uniform bound on $$\{(\phi(tz)-\phi(txz))\,\,|\,\,z \in K,\,\,t \in [0,1],\,\,x \in [-1,1]\},$$  and therefore on $\{F(\nu)(z)\,\,|\,\,z\in K,\,\, \nu \in \mathscr{M}^1([-1,1]\times [0,1])\}$. The Montel compactness principle ensures then the global continuity of $\nu \mapsto F(\nu)$.
\end{proof}

\begin{remark}
We are not saying that $\frac{1}{2}\mathscr{D}_0$ is the whole domain of definition and analyticity of the integrals $\Lambda_\omega$ and $\Psi_\omega$. Indeed, $\phi$ admits a logarithmic singularity at $2\I \pi$, but this singularity can be removed by integration of the parameter $t$; see also our Remark \ref{rem:not_log_laplace}, where we give a function $\Lambda_{\omega_0}$ which is well-defined on the whole imaginary line $\I\R$. 
\end{remark}

\begin{proof}[Proof of Theorem \ref{main:log_laplace}] 
We apply Lemma \ref{lem:clever_complex} to the sequence of functions
$$f_n(z) = \log \esper\!\left[\E^{z(X\expn-\esper[X\expn])}\right] - n\,\Lambda_{\omega\expn}(z) - \Psi_{\omega\expn}(z).$$
Let us remark the following consequence of Lemma \ref{lem:continuity_lambda_psi}: since $\Omega$ is a compact topological set, the functions $\Psi_{\omega\expn}(z)$ are locally uniformly bounded on $\frac{1}{2}\mathscr{D}_0$, independently from the growing sequence $(\lambda\expn)_{n \geq 1}$. Therefore, in order to prove the theorem, it suffices to show that 
\begin{enumerate}
    \item $f_n(z)$ converges to $0$ on a disc containing the origin;
    \item $\log \esper\!\left[\E^{z(X\expn-\esper[X\expn])}\right] - n\,\Lambda_{\omega\expn}(z)$ is locally uniformly bounded on $\frac{1}{2}\mathscr{D}_0$.
\end{enumerate}
The second item is an immediate consequence of Theorem \ref{thm:control_laplace}, by using the computation from Remark \ref{rem:first_cumulant} in order to deal with the term $$-z\,\esper[X\expn] = -\frac{nz}{4}\left(1-\int_{x=-1}^1 x\,\mu\expn(\!\DD{x})\right)+O(1)$$
in the log-Laplace transform of the recentered random variables (this explains the replacement of $\varphi$ by $\phi$ in the formula for $\Lambda_\omega$).
We now suppose that $z$ belongs to the disc $D(0,\pi)$. If $\lambda\expn$ is an integer partition with size $n$, then we have 
$$p_k^{\mathrm{F}}(\lambda\expn) = n^k \,p_k(\omega\expn) = n^k \int_{x=-1}^1\,x^{k-1}\,\mu\expn(\!\DD{x}),$$
so the estimate of Theorem \ref{thm:asymptotics_cumulants} rewrites as:
\begin{align*}
\kappa^{(r)}(X\expn) &= \frac{n\,B_r}{r(r+1)} \left(\int_{x=-1}^1 (1-x^{r})\,\mu\expn(\!\DD{x}) \right)\\ 
&\,\,\, + \frac{B_r}{2r} \left(1 + \sum_{s=1}^{r-1} \binom{r}{s} (-1)^s \int_{x=-1}^1\int_{y=-1}^1 x^{s-1}\,y^{r-s-1} \,\mu^{(n),\otimes 2}(\!\DD{x}\DD{y})\right) + O(n^{-1})
\end{align*}
for any $r \geq 2$. Let us denote $n\,K_r$ the term proportional to $n$ in each expansion above. We have:
\begin{align*}
\sum_{r=2}^\infty K_r\,\frac{z^r}{r!} &= \sum_{r=2}^\infty \frac{B_r}{r(r+1)} \left(\int_{x=-1}^1 \frac{z^r(1-x^{r})}{r!}\,\mu\expn(\!\DD{x})\right) \\ 
&=\int_{t=0}^1 \int_{x=-1}^1 \left(\sum_{r=2}^\infty \frac{B_r}{r}\,\frac{(tz)^r(1-x^r)}{r!}\right)
\,\mu\expn(\!\DD{x})\DD{t} \\ 
&= \int_{t=0}^1 \int_{x=-1}^1 (\phi(tz)-\phi(txz)) \,\mu\expn(\!\DD{x}) \DD{t}.
\end{align*}
The exchange of the symbols $\int$ and $\sum$ is justified by the fact that we are looking at convergent power sums. Similarly, if $L_r$ is the term of order $O(1)$ in the expansion of the cumulant $\kappa^{(r)}(X\expn)$, then
\begin{align*}
2\sum_{r=2}^\infty L_r\,\frac{z^r}{r!} &= \phi(z) + \sum_{r=2}^\infty \frac{B_r}{r} \,\frac{z^r}{r!} \int_{x=-1}^1 \int_{y=-1}^1 \frac{(y-x)^r-y^r-(-x)^r}{xy}\,\mu^{(n),\otimes 2}(\!\DD{x}\DD{y}) \\ 
&= \int_{x=-1}^1 \int_{y=-1}^1 \left(\phi(z) + \frac{\phi((y-x)z)-\phi(yz)-\phi(-xz)}{xy}\right) \,\mu^{(n),\otimes 2}(\!\DD{x}\DD{y}).
\end{align*}
So, one obtains the formula of Theorem \ref{main:log_laplace}. The only thing that we need to check is that we can indeed exchange the limits with the summation over indices $r \geq 1$. However, by Theorem \ref{thm:control_laplace},
$$\log \esper\!\left[\E^{z (X\expn-\esper[X\expn])}\right] - n \int_{t=0}^1 \int_{x=-1}^1 (\phi(tz)-\phi(txz)) \,\mu\expn(\!\DD{x}) \DD{t}$$
 is locally uniformly bounded for $|z|<\pi$. Therefore, the remainder lives in a compact subset of the space of holomorphic functions on $D_{(0,\pi)}$, and we can indeed take the sum of the limits.
\end{proof} 

\begin{remark}\label{rem:not_log_laplace}
Our Theorem \ref{main:log_laplace} ensures that given a convergent sequence $(\lambda\expn)_{n \geq 1}$ with limit $\omega$, the log-Laplace transform of $X\expn$ behaves asymptotically like $n\,\Lambda_\omega(z)$. This makes one wonder whether $\Lambda_\omega(z)$ is itself the log-Laplace transform of a probability distribution. The general answer to this question is negative. Indeed, by Bochner's theorem, if $\Lambda(z)$ is the log-Laplace transform of a probability distribution, then for any finite family of real numbers $\xi_1,\ldots,\xi_r$, the matrix
$$(\E^{\Lambda(\I\xi_i-\I\xi_j)})_{1\leq i,j\leq r}$$
is Hermitian non-negative definite. Consider the case where $\omega_0=((0,0,\ldots),(0,0,\ldots))$ is the Thoma parameter with two null sequences (by continuity, any Thoma parameter sufficiently close to this parameter will also fail the Bochner condition). We then have:
$$\Lambda_{\omega_0}(\I \xi) = \int_{t=0}^1 \log\left(\frac{\sin \frac{t\xi}{2}}{\frac{t\xi}{2}}\right)\DD{t}$$
since $\mu_{\omega_0} = \delta_0$. With $(\xi_1,\xi_2,\xi_3)=(0,3,6)$, a numerical integration of the function above yields a symmetric matrix $(\E^{\Lambda(\I\xi_i-\I\xi_j)})_{1\leq i,j\leq 3}$ whose smallest eigenvalue is $-0.0135\ldots$, so it is not non-negative definite.
\end{remark}
\medskip

\subsection{Upper bounds on cumulants and Berry--Esseen estimates}\label{sub:proof_B}
In \cite{FMN16,FMN19}, the classical method of cumulants used in order to prove a central limit theorem has been perfected in order to obtain moderate deviation estimates and upper bounds on the Kolmogorov distance between a distribution and its normal approximation. In our setting, these methods translate to the following:

\begin{lemma}\label{lem:berry}
Let $X$ be a random variable with variance $\sigma^2>0$. We suppose that for any $r \geq 3$,
$$|\kappa^{(r)}(X)| \leq r! \,\sigma^2\,K L^{r-2}$$
for some constants $K\geq \frac{1}{4}$ and $L > 0$; in particular, the log-Laplace transform $\log \esper[\E^{xZ}]$ is convergent on the open disc $D_{(0,\frac{1}{L})}$. Then, 
$$\dkol\left(\frac{X-\esper[X]}{\sqrt{\var(X)}},\,\,\mathcal{N}(0,1)\right) \leq \frac{18\,KL}{\sigma}.$$
\end{lemma}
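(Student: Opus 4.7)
The plan is to invoke the classical Esseen smoothing inequality, which reduces the Kolmogorov distance to an integral of $|\varphi_Y(\xi) - \E^{-\xi^2/2}|/|\xi|$ over a truncated range, and then to control this integrand by a pointwise estimate on the cumulant generating series extracted from the hypothesis on $|\kappa^{(r)}(X)|$.

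First I would set $Y = (X-\esper[X])/\sigma$ and $\widetilde{L}=L/\sigma$, so that $\kappa^{(1)}(Y)=0$, $\kappa^{(2)}(Y)=1$, and for $r \geq 3$
$$|\kappa^{(r)}(Y)| = \sigma^{-r}\,|\kappa^{(r)}(X)| \leq r!\,K\,\widetilde{L}^{r-2}.$$
The excess cumulant generating function
$$g(z) := \log \esper[\E^{zY}] - \frac{z^2}{2} = \sum_{r=3}^\infty \frac{\kappa^{(r)}(Y)}{r!}\,z^r$$
is then holomorphic on the open disc $|z|<1/\widetilde{L}$, and a geometric summation gives the simple estimate
$$|g(z)| \leq K\sum_{r=3}^\infty (\widetilde{L}|z|)^{r-2}\,|z|^2 = \frac{K\widetilde{L}|z|^3}{1-\widetilde{L}|z|}.$$
On the imaginary segment $z=\I\xi$ with $|\xi|\widetilde{L}\leq \tfrac{1}{2}$, this specialises to $|g(\I\xi)|\leq 2K\widetilde{L}|\xi|^3$.

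Second, I would apply Esseen's smoothing lemma: for any $T>0$,
$$\dkol(Y,\mathcal{N}(0,1)) \leq \frac{1}{\pi}\int_{-T}^{T}\left|\frac{\varphi_Y(\xi)-\E^{-\xi^2/2}}{\xi}\right|\DD{\xi} + \frac{24}{\pi T\sqrt{2\pi}},$$
with $\varphi_Y(\xi)=\exp(-\xi^2/2+g(\I\xi))$ for $|\xi|<1/\widetilde{L}$. Using $|\E^w-1|\leq |w|\,\E^{|w|}$, I would pick the cutoff $T$ of the form $c/(K\widetilde{L})$ with $c$ small enough that $|g(\I\xi)|\leq \xi^2/4$ throughout $|\xi|\leq T$; the hypothesis $K\geq\tfrac{1}{4}$ is tailored precisely to make $T\widetilde{L}\leq \tfrac{1}{2}$, so that the bound $|g(\I\xi)|\leq 2K\widetilde{L}|\xi|^3$ is available and implies $|g(\I\xi)|\leq \xi^2/4$. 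This yields the pointwise estimate $|\varphi_Y(\xi)-\E^{-\xi^2/2}| \leq 2K\widetilde{L}|\xi|^3\,\E^{-\xi^2/4}$ on $[-T,T]$, from which the integral contributes a term of order $K\widetilde{L}$ via $\int_{\R}\xi^2\,\E^{-\xi^2/4}\DD{\xi}=4\sqrt{\pi}$, while the boundary term in Esseen's inequality is also of order $K\widetilde{L}$ by the choice of $T$.

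Third, both contributions are of the form $C_i\,KL/\sigma$, and summing them produces a universal constant $C$ that depends only on the choice of the cutoff parameter $c$. The main (and only) delicate point is to tune $c$ so as to optimize the trade-off between the integral over $[-T,T]$ and the boundary term $24/(\pi T\sqrt{2\pi})$, recovering the specific constant $18$ announced in the statement; this optimisation is carried out in \cite[Corollary 30]{FMN19}. Apart from this bookkeeping, the argument is a routine application of the Esseen smoothing framework combined with the explicit cumulant bound on $g$.
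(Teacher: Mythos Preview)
Your proposal is correct and follows essentially the same route as the paper: both apply the Berry--Esseen smoothing inequality, bound the tail cumulant series $g(\I\xi)=\sum_{r\geq 3}\kappa^{(r)}(Y)(\I\xi)^r/r!$ by a geometric sum, use $|\E^{w}-1|\leq |w|\,\E^{|w|}$ to estimate the integrand, and balance the resulting Gaussian-weighted integral against the boundary term $24m/(\pi T)$. The only differences are cosmetic choices of constants --- the paper takes the explicit cutoff $T=\sigma/(4KL)$ and the cruder bound $|g(\I\xi)|\leq \tfrac{4KL|\xi|^3}{3\sigma}\leq \tfrac{|\xi|^2}{3}$, obtaining $\int_{\R}\tfrac{4KL\xi^2}{3\sigma}\E^{-\xi^2/6}\DD{\xi}=4\sqrt{6\pi}\,KL/\sigma$ for the integral contribution, whereas you leave the cutoff as $c/(K\widetilde{L})$ and defer the final arithmetic to \cite{FMN19}.
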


\begin{proof}
In the following, we adapt the proof of \cite[Corollary 30]{FMN19}. A classical inequality due to Berry ensures that if $U$ and $V$ are real-valued random variables with Fourier transforms $\Phi_U(\xi)$ and $\Phi_V(\xi)$, and if $m$ is an upper bound on the density of the distribution of $V$ with respect to the Lebesgue measure, then
$$\dkol(U,V) \leq \frac{1}{\pi} \int_{-T}^T \left|\frac{\Phi_U(\xi) - \Phi_V(\xi)}{\xi}\right|\DD{\xi}+ \frac{24m}{\pi T}$$
for any $T>0$; see \cite{Ber41} and \cite[Lemma XVI.3.2]{Fel71}. We set 
$$U = \frac{X-\esper[X]}{\sqrt{\var(X)}} \qquad;\qquad V = \mathcal{N}(0,1)\qquad;\qquad T = \frac{\sigma}{4KL}.$$ 
Note that $\log \Phi_U(\xi)$ is equal to its Taylor series on the interval $(-T,T)$. We have:
$$
\Phi_U(\xi) - \Phi_V(\xi) = \exp\left(\sum_{r\geq 2}\frac{\kappa^{(r)}(X)}{r!} \left(\frac{\I \xi}{\sigma}\right)^{\!r}\right) - \E^{-\frac{\xi^2}{2}} = \E^{-\frac{\xi^2}{2}} \left(\exp\left(\sum_{r\geq 3}\frac{\kappa^{(r)}(X)}{r!}  \left(\frac{\I \xi}{\sigma}\right)^{\!r} \right) - 1\right),
$$
and if $z = \sum_{r\geq 3}\frac{\kappa^{(r)}(X)}{r!}  \left(\frac{\I \xi}{\sigma}\right)^r$, then 
\begin{align*}
|z| \leq K |\xi|^2 \sum_{r \geq 3} \left(\frac{L|\xi|}{\sigma}\right)^{r-2} = |\xi|^2 \frac{KL|\xi|}{\sigma - L|\xi|} \leq \frac{4 K L|\xi|^3}{3\sigma} \leq \frac{|\xi|^2}{3}. 
\end{align*} 
Therefore, $|\exp(z)-1| \leq |z|\,\E^{|z|} \leq \frac{4KL|\xi|^3}{3\sigma}\,\E^{\frac{|\xi|^2}{3}}$, so the integral in Berry's upper bound is smaller than
$$\int_{\R} \frac{4KL|\xi|^2}{3\sigma}\,\E^{-\frac{|\xi|^2}{6}}\DD{\xi} = 4\sqrt{6\pi}\, \frac{KL}{\sigma}.$$
We conclude by using the upper bound $m=\frac{1}{\sqrt{2\pi}}$ on the density of the normal distribution.
\end{proof}     

\begin{proof}[Proof of Theorem \ref{main:berry_esseen}]
The variance of $\majTn$ satisfies 
$$\frac{n^3(1-p_3(\omega\expn))-\frac{3n^2}{2}}{36} \leq \sigma^2 \leq \frac{n^3(1-p_3(\omega\expn))}{36} ;$$
 see our Example \ref{ex:second_cumulant}. In particular, if we suppose that $\max(\frac{\lambda_1\expn}{n},\frac{\lambda_1^{(n),'}}{n}) \leq \frac{1}{2}$ and that $n \geq 4$, then 
$$p_3(\omega\expn) = \int_{x=-1}^1 x^2\,\mu_{\omega\expn}(\!\DD{x}) \leq \frac{1}{4}$$
and $\frac{3}{8}\,\frac{n^3}{36}\leq \sigma^2 \leq \frac{n^3}{36}$ if $n \geq 4$. Now, the cumulants with higher order satisfy 
\begin{align*}
|\kappa^{r}(\majTn)|  &\leq \frac{|B_r|}{r}\, \sum_{i=1}^n i^r \leq \frac{B_r\,(n+\frac{1}{2})^{r+1}}{r(r+1)} \leq r!\, \frac{\pi^2}{100} \, \frac{(n+\frac{1}{2})^{3}}{36} \left(\frac{n+\frac{1}{2}}{2\pi}\right)^{r-2} 
\end{align*}
since 
$$\frac{|B_r|}{r(r+1)} \leq \frac{2\,\zeta(4)\,r!}{20\,(2\pi)^r} = \frac{\pi^2\,r!}{3600}\,\frac{1}{(2\pi)^{r-2}}$$ for $r \geq 4$. The theorem follows from Lemma \ref{lem:berry} with $\sigma^2 = \var(\majTn)$, $L = \frac{n+\frac{1}{2}}{2\pi}$ and
$$ K = \frac{1}{4} \, \frac{(n+\frac{1}{2})^3}{36\,\sigma^2} \geq  \frac{1}{4}.$$
 Note that we could remove the assumption on $\lambda_1\expn$ and $\lambda_1^{(n),'}$ by reworking a bit the argument of Lemma \ref{lem:berry}. By the remark made just after the statement of Theorem \ref{thm:BKS}, the only thing to avoid is that $\limsup_{n \to \infty}(n-\lambda\expn_1)<+\infty$ or $\limsup_{n \to \infty} (n-\lambda_1^{(n),'})<+\infty$, since this prohibits the convergence in distribution to the Gaussian law $\mathcal{N}(0,1)$.
\end{proof}
\medskip

\subsection{Exponential tilting of measures and control of the tilted Fourier transforms}\label{sub:proof_C}
In order to prove Theorem \ref{main:large_deviations}, we shall adapt the proof of \cite[Theorem 4.2.1]{FMN16}, which gives a very similar result in the case where the leading term $\Lambda(z)$ of the scaled log-Laplace transform \emph{is the log-Laplace transform of a non-lattice infinitely divisible distribution}. By our Remark \ref{rem:not_log_laplace}, this is not the case here with $\Lambda_\omega(z)$, so we need to rework some of the arguments.
\medskip

For $x \in (-1,1)$, a simple analysis of the function $h \mapsto \log(\frac{x\sinh \frac{h}{2}}{\sinh \frac{hx}{2}})$ shows that it is defined on the whole real line, even, positive for $h \neq 0$ and strictly convex. Therefore, if $\mu$ is a probability measure on $[-1,1]$ which is not concentrated on $\{-1,1\}$, then the map
$$h \mapsto \int_{t=0}^1 \int_{x=-1}^1 (\phi(th) - \phi(txh))\,\mu(\!\DD{x})\DD{t}$$
is also even, positive for $h \neq 0$ and strictly convex. In particular, this is true when $\mu=\mu_\omega$ and $\omega \in \Omega$ is not one of the two parameters $\omega_1=((1,0,\ldots),(0,\ldots))$ and $\omega_{-1}=((0,\ldots),(1,0,\ldots))$.

\begin{lemma}
For $\omega \in \Omega$, the derivative $\Lambda_\omega'(h)$ goes to $\frac{1}{4}\,\int_{x=-1}^1 (1-x)\,\mu_\omega(\!\DD{x})$ when $h$ goes to infinity.
\end{lemma}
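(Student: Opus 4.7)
The plan is to differentiate $\Lambda_\omega$ under the integral sign and then apply the dominated convergence theorem as $h \to +\infty$. The key analytic input is that $\phi'(z) = \tfrac{1}{2}\coth(z/2) - \tfrac{1}{z}$ extends continuously to an odd real-analytic function on $\R$, with $\phi'(0) = 0$ and $\phi'(z) \to \pm \tfrac{1}{2}$ as $z \to \pm\infty$; since $\phi$ is even and strictly convex on $\R$, one has the universal bound $|\phi'(z)| \le \tfrac{1}{2}$ for every $z \in \R$.

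First I would justify differentiation under the integral. For $h$ in any neighborhood of a fixed $h_0 \in \R$, the partial derivative of the integrand $\phi(th) - \phi(txh)$ with respect to $h$ is $t\phi'(th) - tx\,\phi'(txh)$, bounded in modulus by $t(1+|x|)/2 \le t$, which is integrable against $\mu_\omega(\DD{x})\DD{t}$ on $[-1,1]\times[0,1]$. This yields
\begin{equation*}
\Lambda_\omega'(h) \;=\; \int_0^1 \int_{-1}^1 \bigl(t\phi'(th) - tx\,\phi'(txh)\bigr)\,\mu_\omega(\DD{x})\DD{t}.
\end{equation*}

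Next I would compute the pointwise limit of the integrand as $h \to +\infty$. For fixed $t \in (0,1]$: $\phi'(th) \to 1/2$; for $x \neq 0$, $\phi'(txh) \to \tfrac{1}{2}\mathrm{sgn}(x)$, so $tx\,\phi'(txh) \to t|x|/2$; for $x = 0$ the second term vanishes. In every case the integrand converges to $t(1-|x|)/2$. Invoking the uniform bound from the previous step, dominated convergence gives
\begin{equation*}
\lim_{h\to+\infty} \Lambda_\omega'(h) \;=\; \int_{-1}^{1} \int_{0}^{1} \frac{t(1-|x|)}{2}\DD{t}\,\mu_\omega(\DD{x}) \;=\; \frac{1}{4}\int_{-1}^{1} (1-|x|)\,\mu_\omega(\DD{x}).
\end{equation*}

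This is the value the natural DCT argument yields. It agrees with the formula $\tfrac{1}{4}\int(1-x)\,\mu_\omega(\DD{x})$ stated in the lemma whenever $\mu_\omega$ is supported in $[0,1]$, but the two expressions differ in general; one sanity check is $\omega = \omega_{-1}$, where $\mu_\omega = \delta_{-1}$ and $\Lambda_\omega \equiv 0$ forces the limit to be $0 = \int(1-|x|)\,\mu_{\omega_{-1}}(\DD{x})$, so the integrand should involve $|x|$. There is no real obstacle in the argument beyond the bookkeeping for DCT; the entire analytic content is the universal bound $|\phi'| \le \tfrac{1}{2}$, which controls both the differentiation under the integral and the dominating function.
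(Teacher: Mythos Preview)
Your approach is essentially the same as the paper's: differentiate under the integral sign and pass to the limit pointwise. The paper writes the derivative using $\varphi'(z)=\frac{1}{1-\E^{-z}}-\frac{1}{z}$ rather than $\phi'$, obtaining
\[
\Lambda_\omega'(h)=\int_{0}^{1}\int_{-1}^{1}\left(\frac{t}{1-\E^{-ht}}-\frac{tx}{1-\E^{-thx}}+\frac{t(x-1)}{2}\right)\mu_\omega(\!\DD{x})\DD{t},
\]
which is exactly your $t\phi'(th)-tx\,\phi'(txh)$. You are more careful than the paper in two respects: you justify the differentiation and the passage to the limit by the uniform bound $|\phi'|\le\tfrac12$, whereas the paper omits this.

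More importantly, you have caught a genuine error in the paper. The paper asserts that the limit of the integrand is $\frac{t(1-x)}{2}$, but this is only correct for $x\ge 0$: when $x<0$ one has $\frac{tx}{1-\E^{-thx}}\to 0$ (not $tx$) as $h\to+\infty$, and the limit of the integrand is $\frac{t(1+x)}{2}=\frac{t(1-|x|)}{2}$. Your sanity check at $\omega_{-1}$ (where $\mu_\omega=\delta_{-1}$ and $\Lambda_{\omega_{-1}}\equiv 0$) is decisive. So the lemma as stated is incorrect, and the right conclusion is
\[
\lim_{h\to+\infty}\Lambda_\omega'(h)=\frac{1}{4}\int_{-1}^{1}(1-|x|)\,\mu_\omega(\!\DD{x}).
\]
This propagates to the hypothesis of Theorem~\ref{main:large_deviations} and to the figure caption, where the asymptotic slope should be $\frac{1}{4}\int(1-|x|)\,\mu_\omega(\!\DD{x})$ rather than $\frac{1}{4}(1-p_2(\omega))$.
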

\begin{proof}
We compute 
$$
\Lambda_\omega'(h) = \int_{t=0}^1 \int_{x=-1}^1 \left(\frac{t}{1-\E^{-ht}} - \frac{tx}{1-\E^{-thx}} + \frac{t(x-1)}{2}\right)\mu_\omega(\!\DD{x})\DD{t}.
$$
Hence, the limit when $h$ goes to infinity is $\int_{t=0}^1 \int_{x=-1}^1 \frac{t(1-x)}{2}\,\mu_\omega(\!\DD{x})\DD{t} =\frac{1}{4}\,\int_{x=-1}^1 (1-x)\,\mu_\omega(\!\DD{x}) $.
\end{proof}

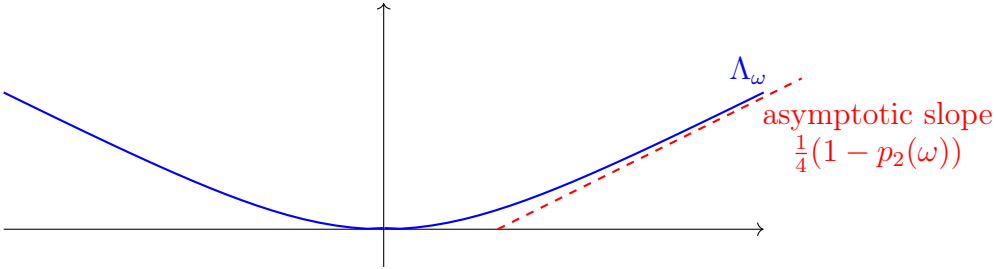
\begin{figure}[ht]
\begin{center}        
\begin{tikzpicture}[scale=1]
\draw [->] (-5,0) -- (5,0);
\draw [->] (0,-0.5) -- (0,3);
\draw [domain=0.01:5,thick,blue] (0,0) plot ({\x},{ln(0.5*(exp(\x)-exp(-\x))/(exp(0.5*\x)-exp(-0.5*\x)))});
\draw [domain=0.01:5,thick,blue] (0,0) plot ({-\x},{ln(0.5*(exp(\x)-exp(-\x))/(exp(0.5*\x)-exp(-0.5*\x)))});
\draw [blue] (4.8,2.1) node {$\Lambda_\omega$};
\draw [dashed,red,thick,shift={(1.5,0)}] (0,0) -- (4,2);
\draw [red] (6.5,1.5) node {asymptotic slope};
\draw [red] (6.5,1) node {$\frac{1}{4}(1-p_2(\omega))$};
\end{tikzpicture}
\caption{Restriction to the real line of the function $\Lambda_\omega$ for $\omega \notin \{\omega_1,\omega_{-1}\}$.}
\end{center}
\end{figure}

Until the end of this section, we fix a convergent sequence $(\lambda\expn)_{n \geq 1}$ with limiting parameter $\omega \notin \{\omega_1,\omega_{-1}\}$, and a real number $y \in (0,\Lambda_\omega'(\infty))$. Since $\Lambda_\omega'$ is strictly increasing, there exists a unique $h \in (0,+\infty)$ such that $\Lambda_\omega'(h)=y$. Moreover, since $\omega \mapsto \Lambda_\omega'$ is continuous with respect to the topology of uniform convergence on compact sets (Lemma \ref{lem:continuity_lambda_psi}), and since $\Lambda_\omega'(\infty) = \frac{1}{4}(1-p_2(\omega))$ is also a continuous function of $\omega$ (Lemma \ref{lem:frobenius_thoma}), for $n$ large enough, there exists a unique parameter $h\expn \in (0,+\infty)$ such that $\Lambda_{\omega\expn}'(h\expn)=y$, and we have $\lim_{n \to \infty} h\expn = h$. \medskip

We define the \emph{tilted} random variable $\tildeX\expn$ by the formula:
$$\proba\!\left[\tildeX\expn \in (x,x+\!\DD{x})\right] = \frac{\E^{h\expn x}}{\esper[\E^{h\expn X\expn}]}\,\proba[X\expn \in (x,x+\!\DD{x})],$$
with $X\expn = \frac{\majTn}{n}$. This exponential tilting of probability measures corresponds to a shift of the log-Laplace transforms: if $\Lambda\expn(z)=\log \esper[\E^{zX\expn}]$ is the log-Laplace transform of $X\expn$, then $\Lambda\expn(z+h\expn)-\Lambda\expn(h\expn)$ is the log-Laplace transform of $\tildeX\expn$. Set 
$$\Psi\expn(z) = \Lambda\expn(z) - z \esper[X\expn] - n\,\Lambda_{\omega\expn}(z);$$ by Theorem \ref{main:log_laplace}, $\Psi\expn(z) = \Psi_{\omega\expn}(z) + o(1)$, and since $(\lambda\expn)_{n \geq 1}$ is a convergent sequence, $\Psi\expn$ converges locally uniformly on $\frac{1}{2}\mathscr{D}_0$ towards the analytic function $\Psi_\omega(z)$. We translate this result for the tilted sequence $(\tildeX\expn)_{n \geq 1}$:
\begin{align*}
\log \esper\!\left[\E^{z\tildeX\expn}\right]
&= n\big(\Lambda_{\omega\expn}(z+h\expn) - \Lambda_{\omega\expn}(h\expn)\big) + \big(\Psi\expn(z+h\expn) - \Psi\expn(h\expn)\big) + z \esper[X\expn] .
\end{align*}
Since
$$\esper\!\left[\tildeX\expn\right] = (\Lambda\expn)'(h\expn) = \esper[X\expn] + n\,\Lambda_{\omega\expn}'(h\expn) + \Psi^{(n),'}(h\expn),$$
we can rewrite the previous estimate as follows:
\begin{align*}
\log \esper\!\left[\E^{z\left(\tildeX\expn- \esper[\tildeX\expn]\right)}\right] 
&=n\big(\Lambda_{\omega\expn}(z+h\expn) - \Lambda_{\omega\expn}(h\expn)-z\Lambda_{\omega\expn}'(h\expn)\big) \\ 
&\quad + \big(\Psi\expn(z+h\expn) - \Psi\expn(h\expn) -z\Psi^{(n),'}(h\expn)\big).
\end{align*}
The two functions $\Lambda_{\omega\expn}(z+h\expn) - \Lambda_{\omega\expn}(h\expn)-z\Lambda_{\omega\expn}'(h\expn)$ and $\Psi\expn(z+h\expn) - \Psi\expn(h\expn) -z\Psi^{(n),'}(h\expn)$ are analytic on the domain $-h\expn + \frac{1}{2}\mathscr{D}_0$, and they converge locally uniformly on the domain  $-h + \frac{1}{2}\mathscr{D}_0$ towards 
$\Lambda_{\omega}(z+h) - \Lambda_{\omega}(h)-z\Lambda_{\omega}'(h)$ and $\Psi_\omega(z+h) - \Psi_\omega(h\expn) -z\Psi_\omega'(h)$. In particular, the convergence holds on the whole imaginary line $\I\R$, because the translation by $-h\expn < 0$ of the domain $\frac{1}{2}\mathscr{D}_0$ moves away from this line the two intervals $\I[2\pi,+\infty)$ and $\I(-\infty,-2\pi]$. 
The estimates above imply then a central limit theorem for the tilted random variable $\tildeX\expn$. The variance of $\tildeX\expn$ is given by the second derivative of $\Lambda\expn$ at $h\expn$, so it is equivalent to $n\,\Lambda_{\omega\expn}''(h\expn)$. Therefore, we can expect that
$$\proba\!\left[\frac{\tildeX\expn-\esper[\tildeX\expn]}{\sqrt{n\,\Lambda_\omega''(h)}}\leq t\right] = \int_{-\infty}^t \E^{-\frac{s^2}{2}}\,\frac{\!\DD{s}}{\sqrt{2\pi}} + o(1).$$
It will be useful to introduce a deformation of the Gaussian distribution, which is a signed measure on $\R$ and turns out to be a better approximation of the law of $\tildeX\expn$. 
\begin{proposition}\label{prop:technical_large_deviations}
Let $F\expn(t)$ be the cumulative distribution function of the variable $\frac{\tildeX\expn-\esper[\tildeX\expn]}{\sqrt{n\,\Lambda_{\omega\expn}''(h\expn)}}$, and 
$$G\expn(t) = \int_{-\infty}^t \left(1 + \frac{\Lambda_\omega'''(h)}{\sqrt{n\,(\Lambda_\omega''(h))^3}}\,\frac{s^3-3s}{6}\right)\E^{-\frac{s^2}{2}}\,\frac{\!\DD{s}}{\sqrt{2\pi}}.$$
The supremum of $|F\expn(t)-G\expn(t)|$ over $\R$ is a $o(n^{-1/2})$.
\end{proposition}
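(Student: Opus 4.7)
The plan is to reduce the uniform bound on $|F^{(n)}-G^{(n)}|$ to an $L^1$-estimate on Fourier transforms via Esseen's smoothing inequality, and then to use the expansion of $\log \widehat F^{(n)}$ derived just before the statement of the proposition. Writing $H_3(s)=s^3-3s$ for the third probabilist Hermite polynomial, the identity $\int_{\R} H_3(s)\,e^{\I\xi s-s^2/2}\,\frac{\!\DD{s}}{\sqrt{2\pi}}=(\I\xi)^3\,e^{-\xi^2/2}$ yields explicitly
$$\widehat{G}^{(n)}(\xi)=e^{-\xi^2/2}\left(1+\frac{\Lambda_\omega'''(h)\,(\I\xi)^3}{6\sqrt{n\,(\Lambda_\omega''(h))^3}}\right).$$
Since the density $(G^{(n)})'$ is bounded uniformly in $n$, Esseen's lemma (see \cite[Lemma XVI.3.2]{Fel71}) gives for every $T>0$
$$\sup_{t\in\R}|F^{(n)}(t)-G^{(n)}(t)|\leq\frac{1}{\pi}\int_{-T}^T\left|\frac{\widehat{F}^{(n)}(\xi)-\widehat{G}^{(n)}(\xi)}{\xi}\right|\!\DD{\xi}+\frac{C}{T}.$$
Taking $T=\sqrt{n}\,\log n$ makes the tail term $o(n^{-1/2})$, so the task reduces to bounding the integral by $o(n^{-1/2})$.

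I would split the integration range $[-T,T]$ into three pieces. On a window $|\xi|\leq A$ with $A = \sqrt{\log n}$, setting $w=\I\xi/\sqrt{n\Lambda_{\omega^{(n)}}''(h^{(n)})}$ and plugging into
$$\log\widehat{F}^{(n)}(\xi)=n\bigl[\Lambda_{\omega^{(n)}}(h^{(n)}+w)-\Lambda_{\omega^{(n)}}(h^{(n)})-w\,\Lambda_{\omega^{(n)}}'(h^{(n)})\bigr]+\bigl[\Psi^{(n)}(h^{(n)}+w)-\Psi^{(n)}(h^{(n)})-w\,(\Psi^{(n)})'(h^{(n)})\bigr],$$
then performing a fourth-order Taylor expansion of $\Lambda_{\omega^{(n)}}$ at $h^{(n)}$ and using the local uniform convergence $\Psi^{(n)}\to\Psi_\omega$ granted by Theorem \ref{main:log_laplace}, one obtains
$$\log\widehat{F}^{(n)}(\xi)=-\frac{\xi^2}{2}+\frac{\Lambda_{\omega^{(n)}}'''(h^{(n)})\,(\I\xi)^3}{6\sqrt{n}\,(\Lambda_{\omega^{(n)}}''(h^{(n)}))^{3/2}}+o(n^{-1/2})$$
uniformly on this window, the remainder $o(n^{-1/2})$ combining the fourth-order term $O(\xi^4/n)$ with the $O(\xi^2/n)$ contribution of the $\Psi$-part. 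Exponentiating and subtracting $\widehat{G}^{(n)}$ yields $|\widehat{F}^{(n)}(\xi)-\widehat{G}^{(n)}(\xi)|=o(n^{-1/2})\,e^{-\xi^2/3}$, whose integration against $|\xi|^{-1}\!\DD{\xi}$ contributes $o(n^{-1/2})$. On the intermediate range $A\leq|\xi|\leq\varepsilon\sqrt{n}$ with $\varepsilon$ small enough, the same Taylor expansion remains valid; since the cubic correction is purely imaginary, $\Re[\log\widehat{F}^{(n)}(\xi)]\leq -\xi^2/2+O(\xi^4/n)\leq -\xi^2/3$, and the analogous bound for $\widehat{G}^{(n)}$ makes the contribution bounded by $\int_{|\xi|\geq A}(1+|\xi|^3/\sqrt{n})\,e^{-\xi^2/3}\,\!\DD{\xi}$, which is $o(n^{-1/2})$ for $A=\sqrt{\log n}$.

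The main obstacle is the large frequency range $\varepsilon\sqrt{n}\leq|\xi|\leq\sqrt{n}\log n$, where the Taylor expansion around $h^{(n)}$ is no longer useful. Here I would exploit the strict convexity of $\eta \mapsto \Re[\Lambda_\omega(h+\I\eta)]$ near $\eta=0$ along vertical lines, together with the explicit product form of the characteristic function of $\tilde{X}^{(n)}$ inherited from the $q$-hook length formula. After the change of variable $\eta=\xi/\sqrt{n\,\Lambda_{\omega^{(n)}}''(h^{(n)})}$, the identity $|\sinh(a+\I b)|^2=\sinh^2 a+\sin^2 b$ applied term-by-term to $\prod_i (1-e^{i(h^{(n)}+\I\eta)/n})/\prod_\oblong(1-e^{h(\oblong)(h^{(n)}+\I\eta)/n})$ shows that $\Re[\Lambda_{\omega^{(n)}}(h^{(n)}+\I\eta)-\Lambda_{\omega^{(n)}}(h^{(n)})]\leq -\delta(\eta_0)<0$ for $|\eta|\geq\eta_0>0$, with $\delta(\eta_0)$ bounded below uniformly in $n$ on $\eta_0\leq|\eta|\leq (\log n)/\sqrt{\Lambda''(h)}$, provided $\omega\notin\{\omega_1,\omega_{-1}\}$. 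Combined with Theorem \ref{thm:control_laplace} to absorb the $\Psi^{(n)}$ contribution, this gives $|\widehat{F}^{(n)}(\xi)|\leq C\,e^{-c n}$ uniformly on the large range, which makes its contribution to the integral super-polynomially small. The lattice support of $\tilde{X}^{(n)}$ in $\frac{1}{n}\Z$ causes no issue, since after normalization by $\sqrt{n\Lambda''(h)}$ the lattice spacing is $O(n^{-3/2})$, well below the resolution $n^{-1/2}$ and the cutoff $T$. Summing the three contributions gives the desired $o(n^{-1/2})$ bound.
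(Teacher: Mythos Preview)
Your overall architecture --- Esseen smoothing, the explicit form of $\widehat{G}^{(n)}$ via the Hermite identity, Taylor expansion on the inner ranges --- is the same as the paper's. The genuine gap is in the large-frequency range. You assert that applying $|\sinh(a+\I b)|^2=\sinh^2 a+\sin^2 b$ term-by-term to the $q$-hook product yields $\Re[\Lambda_{\omega^{(n)}}(h^{(n)}+\I\eta)-\Lambda_{\omega^{(n)}}(h^{(n)})]\leq-\delta(\eta_0)$. Two problems. First, $\Lambda_{\omega^{(n)}}$ is the \emph{integral} from Theorem~\ref{main:log_laplace}, not the product; the product is the exact $\Lambda^{(n)}$, and you are conflating the two objects. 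Second, and more importantly, even for the product the term-by-term argument does not give a sign: each individual factor satisfies $\log\bigl|1-\E^{a(h+\I\eta)}\bigr|-\log\bigl|1-\E^{ah}\bigr|=\tfrac{1}{2}\log\bigl(1+\tfrac{2\E^{ah}(1-\cos a\eta)}{(\E^{ah}-1)^2}\bigr)\geq 0$, and these nonnegative contributions enter with \emph{both} signs (numerator $a=k/n$, denominator $a=h(\oblong)/n$), so nothing follows from inspection. This is exactly why the paper isolates the question as a separate Lemma~\ref{lem:difficult}: one has to show that the averaged function $G(x)=\int_0^1\log\!\bigl(1+\tfrac{1-\cos tx\xi}{\cosh txh-1}\bigr)\DD{t}$ is decreasing on $\R_+$, and then to compute the limit $\lim_{\xi\to\infty}\Re[\Lambda_\omega(h+\I\xi)-\Lambda_\omega(h)]<0$ in order to obtain uniformity in $\xi$. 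Neither step is a term-by-term identity, and your proposal does not supply them.

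A secondary issue is your cutoff $T=\sqrt{n}\log n$. After rescaling, the argument $h^{(n)}+\I\eta$ of $\Psi^{(n)}$ then ranges over a set with $|\eta|$ up to order $\log n$, hence leaves every fixed compact subset of $\frac{1}{2}\mathscr{D}_0$; Theorem~\ref{thm:control_laplace} therefore does not directly ``absorb the $\Psi^{(n)}$ contribution'' as you claim. The paper sidesteps this by taking $T=\Delta\sqrt{n}$ with $\Delta$ a \emph{fixed} large constant: the rescaled argument then stays in a compact set (so $|b(\xi)|\leq B(\Delta)$), the tail term $24m/(\pi T)=O(\Delta^{-1}n^{-1/2})$ is made $\leq \eps n^{-1/2}$ by choosing $\Delta$ large, and on $I\setminus I'$ the factor $|a(\xi)|\leq \E^{-nA(\delta)}$ from Lemma~\ref{lem:difficult} makes the remaining integral $O(\E^{-nA(\delta)}\log(\Delta/\delta))=o(n^{-1/2})$.
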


\begin{lemma}\label{lem:difficult}
For any $\delta>0$, $h>0$ and $\omega \in \Omega \setminus \{\omega_1,\omega_{-1}\}$, there exists a constant $A(\delta,h,\omega)>0$ which depends continuously on its three parameters,  such that
$$\forall |\xi|>\delta,\,\,\,\Re ( \Lambda_\omega(h+\I \xi) - \Lambda_\omega(h)) \leq -A(\delta,h,\omega).$$
\end{lemma}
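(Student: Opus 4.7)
Set $L(\xi) := \Re(\Lambda_\omega(h+i\xi) - \Lambda_\omega(h))$. I want to show $\sup_{|\xi| \geq \delta} L(\xi) = -A(\delta,h,\omega)$ with $A>0$ depending continuously on its three parameters. Since $h > 0$, the line $\{h+i\xi : \xi \in \R\}$ lies in $\tfrac{1}{2}\mathscr{D}_0$, so $\Lambda_\omega$ is holomorphic along it; hence $L$ is real-analytic in $\xi$ and jointly continuous in $(\xi, h, \omega)$ by Lemma~\ref{lem:continuity_lambda_psi}. It will suffice to prove (a) $L(\xi) < 0$ strictly for every $\xi \neq 0$, and (b) a uniform negative upper bound on $L(\xi)$ as $|\xi| \to \infty$; compactness then yields the uniform bound, and the continuity of $A$ in $(\delta,h,\omega)$ is inherited from that of $L$.

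For (a), I will exploit the probabilistic identity $\E^{\phi(z)} = \esper[\E^{zU/2}]$ with $U \sim \mathcal{U}([-1,1])$. The strict triangle/Jensen inequality gives $\Re\phi(u(h+i\xi)) < \phi(uh)$ for every $u > 0$ when $\xi \neq 0$. Setting $\Psi(u) := \phi(uh) - \Re\phi(u(h+i\xi)) \geq 0$ (strict for $u > 0$), a change of variables $u = t|x|$ and Fubini yield
\begin{equation*}
\Lambda_\omega(h) - \Re\Lambda_\omega(h+i\xi) = \int_0^1 \Psi(u)(1 - K(u))\DD{u}, \qquad K(u) := \int_{-1}^1 \mathbf{1}_{|x| \geq u}\,\frac{\mu_\omega(\!\DD{x})}{|x|}.
\end{equation*}
A direct computation gives $\int_0^1 K(u)\DD{u} = 1 - \mu_\omega(\{0\})$, so $\int_0^1 (1-K(u))\DD{u} = \mu_\omega(\{0\}) \geq 0$, and since $K$ is non-increasing, $1-K$ is negative near $u=0$ and positive near $u=1$. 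The hypothesis $\omega \notin \{\omega_{\pm 1}\}$ rules out $\mu_\omega$ being supported on $\{\pm 1\}$, and combined with the asymptotics $\Psi(u) = O(u^2\xi^2)$ near $u = 0$ versus $\limsup_{u\to\infty}\Psi(u) = \tfrac{1}{2}\log(1+\xi^2/h^2)$, the positive large-$u$ contributions to $\int \Psi(1-K)\DD{u}$ strictly dominate the negative small-$u$ contributions.

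For (b), I will use the explicit expression $\Re\phi(h+i\xi) = \tfrac{1}{2}\log\!\big((\sinh^2(h/2)+\sin^2(\xi/2))/((h^2+\xi^2)/4)\big) = -\log|\xi| + O_h(1)$ as $|\xi| \to \infty$. Splitting the defining double integral of $\Re\Lambda_\omega(h+i\xi)$ according to whether $|x| \gtrless 1/|\xi|$, the $|x| \leq 1/|\xi|$ regime contributes essentially $-\mu_\omega(\{|x| \leq 1/|\xi|\})\log|\xi| + O(1)$, which tends to $-\mu_\omega(\{0\})\log|\xi|$; the other regime is bounded thanks to $\int_{-1}^1 \log|x|\,\mu_\omega(\!\DD{x})$ being well-controlled for $\omega \neq \omega_{\pm 1}$. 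In every case $L(\xi)$ is bounded above by a strictly negative constant depending continuously on $(h,\omega)$ once $|\xi|$ is large enough.

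Combining (a), (b), and joint continuity, the supremum of $L$ on $\{\delta \leq |\xi| \leq M\}$ for $M$ large is attained and strictly negative, giving $A(\delta,h,\omega) > 0$ with the required continuity. \emph{The main obstacle is (a):} because $\rho(u) := \sin^2(u\xi/2)/\sinh^2(uh/2)$ oscillates, the function $\Psi(u) = \tfrac12 \log\!\big((1+\xi^2/h^2)/(1+\rho(u))\big)$ is not monotone in $u$, so the FKG/rearrangement inequality $\int \Psi(1-K)\DD{u} \geq \int \Psi\DD{u}\cdot \int (1-K)\DD{u}$ is not directly available, and the strict positivity must instead be extracted via a careful quantitative comparison of small-$u$ and large-$u$ contributions, using the hypothesis on $\omega$ in the form of an explicit lower bound on $K(0^+)-1$ via Cauchy--Schwarz.
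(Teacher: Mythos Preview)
Your overall strategy --- prove strict negativity of $L(\xi)$ for each fixed $\xi \neq 0$, then a uniform negative bound at infinity, then patch via continuity --- matches the paper's. Your reformulation $-L(\xi) = \int_0^1 \Psi(u)(1-K(u))\DD{u}$ with $K(u) = \int_{|x|\geq u} |x|^{-1}\mu_\omega(\!\DD{x})$ is correct and clean. However, the crucial step (a) is not established: you yourself flag that $\Psi$ is not monotone in $u$, so the correlation inequality between $\Psi$ and the non-decreasing function $1-K$ is unavailable, and you do not carry out the promised ``quantitative comparison''. The hint about Cauchy--Schwarz on $K(0^+)-1$ does not obviously lead anywhere: one can have $K(0^+)$ arbitrarily large while $\mu_\omega(\{0\}) = 0$, and there is no evident mechanism by which a lower bound on $K(0^+)-1$ alone controls the sign of $\int_0^1 \Psi(1-K)$. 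So as written this is a plan with its main obstacle unresolved, not a proof.

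The paper resolves (a) by a different structural observation: while $\Psi$ oscillates, its running average $\bar\Psi(x) = x^{-1}\int_0^x \Psi(u)\DD{u}$ is monotone \emph{increasing} on $(0,\infty)$. Equivalently, with $F(x) = \log\!\big(1+\tfrac{1-\cos x\xi}{\cosh xh - 1}\big)$ and $G(x)=\int_0^1 F(tx)\DD{t}$, one has $F(x) \leq G(x)$ pointwise, hence $G$ is decreasing. Since $L(\xi) = \tfrac12 \int_{-1}^1 (G(1)-G(|x|))\,\mu_\omega(\!\DD{x})$ and $G(1) - G(|x|) \leq 0$ with strict inequality for $|x|<1$, strict negativity for $\omega \notin \{\omega_{\pm 1}\}$ is immediate. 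This monotonicity of the running average is exactly the missing ingredient; indeed, integrating your expression by parts against $dK$ reduces it to the paper's formula with $\bar\Psi$ in place of $\Psi$, and then monotonicity of $\bar\Psi$ is what makes the pairing with the decreasing $K$ work. For (b), note also that your claim ``$\int_{-1}^1 \log|x|\,\mu_\omega(\!\DD{x})$ is well-controlled for $\omega \neq \omega_{\pm 1}$'' fails whenever $\mu_\omega(\{0\})>0$ (e.g.\ $\omega = \omega_0$, where $\mu_{\omega_0}=\delta_0$); the paper instead computes the explicit limit $\lim_{\xi\to\infty} G(x) = -2\int_0^1 \log(1-\E^{-t|x|h})\DD{t}$ via a power-series expansion in $(\cosh txh)^{-1}$, which avoids any divergent $\log|x|$ term.
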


\begin{proof}
Numerical experiments show that given two parameters $h\neq 0$ and $\omega \in \Omega \setminus \{\omega_1,\omega_{-1}\}$, the map $\xi \mapsto  \Re ( \Lambda_\omega(h+\I \xi) - \Lambda_\omega(h))$ stays negative, but is not decreasing on the whole interval $(0,+\infty)$; see Figure \ref{fig:mock_fourier}.
 So, the proof of the lemma will be a bit more subtle than a study of variations. An essential argument which seems clear from Figure \ref{fig:mock_fourier} is that the aforementioned function admits a limit when $\xi$ goes to infinity, and we shall compute this limit in a moment.
\begin{figure}[ht]
  \begin{center}        
  \includegraphics[scale=0.8]{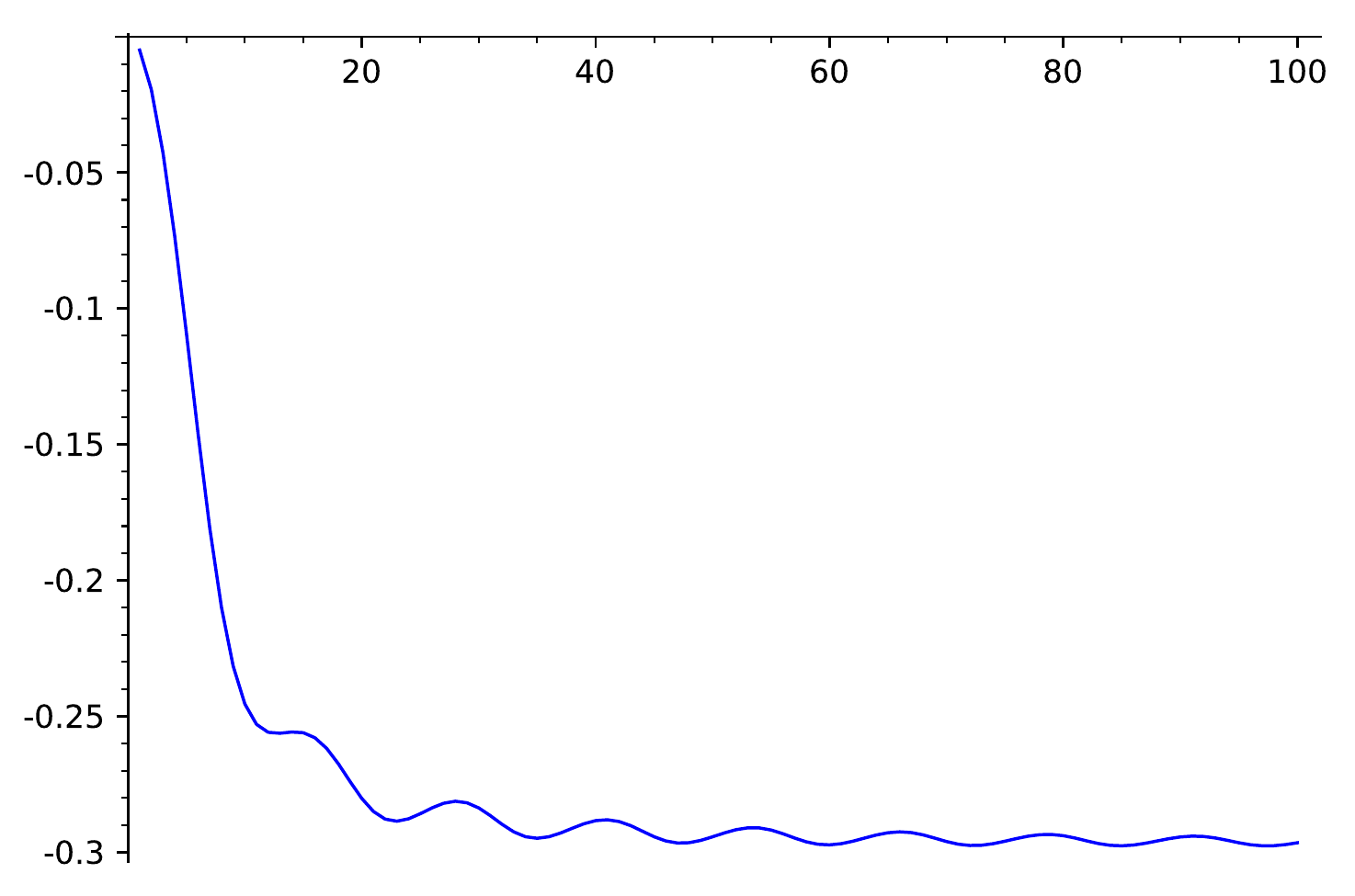}
  \caption{The function $\xi \mapsto \Re ( \Lambda_\omega(h+\I \xi) - \Lambda_\omega(h))$ for $h=5$ and  $\omega=((\frac{1}{2},\frac{1}{2},0,\ldots),(0,\ldots))$.\label{fig:mock_fourier}}
  \end{center}
  \end{figure}
\medskip

If $z = h+\I \xi$, then
\begin{align*}
\Re \log\left( \frac{\sinh \frac{z}{2}}{\frac{z}{2}}\right) &= \frac{1}{2}\log\left|\E^{\frac{h+\I \xi}{2}} - \E^{-\frac{h+\I \xi}{2}}\right|^2 - \frac{1}{2}\log (h^2+\xi^2) \\ 
&= \frac{1}{2}\log(2\cosh h - 2\cos \xi) - \frac{1}{2}\log (h^2+\xi^2) ,
\end{align*}
so
\begin{align*}
\Re \log\left( \frac{\sinh \frac{z}{2}}{\frac{z}{2}}\right) - \Re \log\left( \frac{\sinh \frac{h}{2}}{\frac{h}{2}}\right) &= \frac{1}{2}\log\!\left(1 + \frac{1-\cos \xi}{\cosh h -1}\right) - \frac{1}{2}\log \!\left(1+\frac{\xi^2}{h^2}\right).
\end{align*}
As a consequence, 
\begin{align*}
&\Re ( \Lambda_\omega(h+\I \xi) - \Lambda_\omega(h)) \\ 
& = \frac{1}{2}\int_{t=0}^1\int_{x=-1}^1 \left(\log\!\left(1 + \frac{1-\cos t\xi}{\cosh th -1}\right)  - \log\!\left(1 + \frac{1-\cos tx\xi}{\cosh txh -1}\right) \right) \mu_\omega(\!\DD{x})\DD{t}.
\end{align*}
With $\xi$ and $h$ fixed in $\R \setminus \{0\}$, the function $F( x) = \log(1+\frac{1-\cos x\xi}{\cosh xh - 1})$ is even, and its restriction to $\R_+$ looks as in Figure \ref{fig:decreasing_periodic}.
\begin{figure}[ht]
\begin{center}        
\includegraphics[scale=0.8]{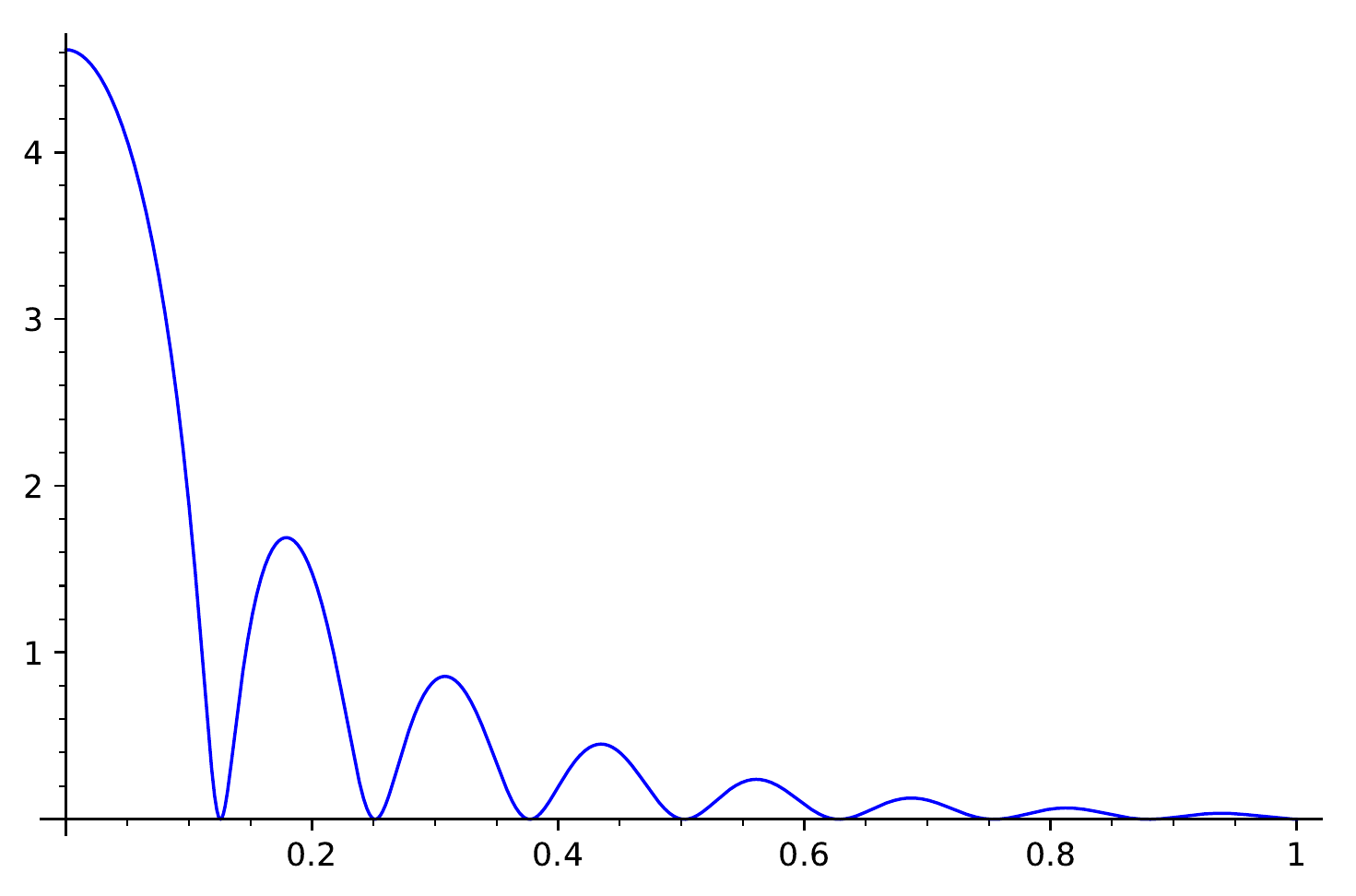}
\caption{Graph of the map $F : x \mapsto \log(1+\frac{1-\cos x\xi}{\cosh xh - 1})$, here with $h=5$ and $\xi=50$.\label{fig:decreasing_periodic}}
\end{center}
\end{figure}
An analysis of functions shows that $F(x)$ is always smaller than its mean $G(x) = \int_{t=0}^1 F(tx)\DD{t}$. Since $G'(x) = \frac{1}{x}(F(x)-G(x))$, this is equivalent to the fact that the map $G$ is decreasing on $\R_+$, see Figure \ref{fig:decreasing_mean}.
\begin{figure}[ht]
\begin{center}        
\includegraphics[scale=0.8]{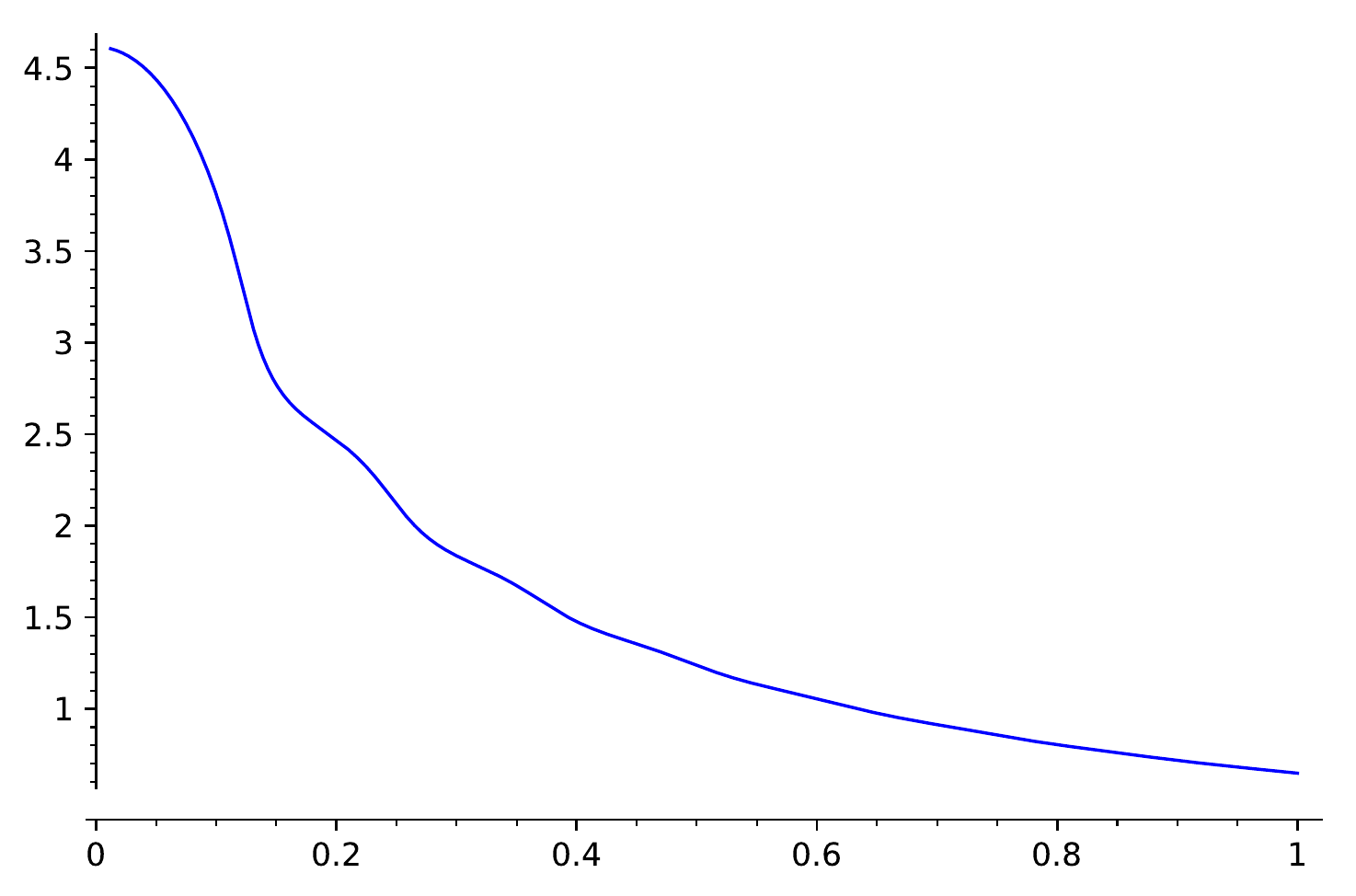}
\caption{Graph of the map $G : x \mapsto \int_{t=0}^1 F(tx)\DD{t}$, again with $h=5$ and $\xi=50$.\label{fig:decreasing_mean}}
\end{center}
\end{figure}
Since $\int_{t=0}^1 (F(t) - F(tx))\DD{t} = G(1)-G(x)$, this implies that if $h \neq 0$, $\xi \neq 0$ and $\mu_\omega$ is not concentrated on $\{-1,1\}$, then $\Re ( \Lambda_\omega(h+\I \xi) - \Lambda_\omega(h))$ is strictly negative.
Then, in order to establish the lemma, it suffices to prove that
$$ \lim_{\xi \to +\infty} \Re ( \Lambda_\omega(h+\I \xi) - \Lambda_\omega(h)) = - A(h,\omega)<0,$$
and that this convergence is locally uniform with respect to the parameters $h$ and $\omega \in \Omega \setminus \{\omega_1,\omega_{-1}\}$. Let us remark that
\begin{align*}
G(x) &= \int_{t=0}^1 \log\!\left(1+\frac{1-\cos tx\xi}{\cosh txh - 1}\right)\DD{t} = \int_{t=0}^1\left( \log\!\left(1 - \frac{\cos tx\xi}{\cosh txh}\right) - \log\!\left(1 - \frac{1}{\cosh txh}\right) \right)\DD{t}  \\ 
&= \sum_{r=1}^\infty \frac{1}{r} \int_{t=0}^1  \frac{1 - (\cos tx\xi)^r}{(\cosh txh)^r}\DD{t}.
\end{align*}
As $\xi$ goes to infinity, the integration of a smooth function against the oscillating weight $(\cos tx\xi)^r$ vanishes if $r$ is odd, and gives $\frac{1}{2^r}\binom{r}{\frac{r}{2}}$ times the integral of the function if $r$ is even. Therefore,
\begin{align*}
\lim_{\xi \to +\infty} G(x) &= \sum_{r=1}^\infty \frac{1}{r} \int_{t=0}^1  \frac{1}{(\cosh txh)^r}\DD{t} - \sum_{s=1}^\infty \frac{1}{2^{2s+1}s} \binom{2s}{s}\int_{t=0}^1  \frac{1}{(\cosh txh)^{2s}}\DD{t} \\ 
&= \int_{t=0}^1 \left(\log\!\left(\frac{1+\sqrt{1-\frac{1}{\cosh^2 txh}}}{2}\right)- \log\!\left(1 - \frac{1}{\cosh txh}\right)\right) \DD{t} \\ 
&= -2\int_{t=0}^1 \log(1-\E^{-t|x|h})\DD{t}. 
\end{align*}
As a consequence,
$$\lim_{\xi \to + \infty} \Re ( \Lambda_\omega(h+\I \xi) - \Lambda_\omega(h)) = \int_{t=0}^1 \int_{x=-1}^1 \log\left(\frac{1-\E^{-t|x|h}}{1-\E^{-th}}\right)\,\mu_\omega(\!\DD{x})\DD{t} = -A(h,\omega)<0,$$
and it is easy to see that this convergence has the right properties of uniformity with respect to the parameters $h \in \R \setminus \{0\}$ and $\omega \in \Omega\setminus\{\omega_1,\omega_{-1}\}$.
\end{proof}

\begin{proof}[Proof of Proposition \ref{prop:technical_large_deviations}]
We shall work with a slightly different distribution on $\R$:
\begin{align*}
\overline{G}\expn(t) &= \int_{-\infty}^t \left(1 + \frac{\Lambda_{\omega\expn}'''(h\expn)}{\sqrt{n\,(\Lambda_{\omega\expn}''(h\expn))^3}}\,\frac{s^3-3s}{6}\right)\E^{-\frac{s^2}{2}}\,\frac{\!\DD{s}}{\sqrt{2\pi}} ;
\end{align*}
this modification is justified by the inequality 
\begin{align*}
|\overline{G}\expn(t)-G\expn(t)| \leq \int_{\R} \left| \frac{\Lambda_{\omega\expn}'''(h\expn)}{\sqrt{(\Lambda_{\omega\expn}''(h\expn))^3}}-\frac{\Lambda_{\omega}'''(h)}{\sqrt{(\Lambda_{\omega}''(h))^3}}\right|\frac{|s|^3+3|s|}{6}\,\E^{-\frac{s^2}{2}}\,\frac{\!\DD{s}}{\sqrt{2\pi n}} = o\!\left(\frac{1}{\sqrt{n}}\right).
\end{align*}
As in the proof of Theorem \ref{main:berry_esseen}, we use the Berry inequality: if 
\begin{align*}
\Phi_1(\xi) &= \esper\!\left[\exp\!\left(\I \xi \,\frac{\tildeX\expn - \esper[\tildeX\expn]}{\sqrt{n\,\Lambda_{\omega\expn}''(h\expn)}}\right)\right]; \\ 
\Phi_2(\xi) &= \int_{-\R} \left(1 + \frac{\Lambda_{\omega\expn}'''(h\expn)}{\sqrt{n\,(\Lambda_{\omega\expn}''(h\expn))^3}}\,\frac{s^3-3s}{6}\right)\E^{-\frac{s^2}{2}+\I s\xi }\,\frac{\!\DD{s}}{\sqrt{2\pi}},
\end{align*}
then for any $T > 0$, 
$$\sup_{t \in \R} \left|F\expn(t) - \overline{G}\expn(t)\right| \leq \frac{1}{\pi}\int_{-T}^T \left|\frac{\Phi_1(\xi)-\Phi_2(\xi)}{\xi}\right|\DD{\xi} + \frac{24m}{\pi T},$$
where $m$ is a uniform upper bound on the (signed) densities $|\overline{G}^{(n),'}(t)|$ with $n \geq 1$ and $t \in \R$ (because of the Gaussian term $\E^{-\frac{s^2}{2}}$ in these densities, there exists indeed such a constant $m>0$). Let us analyse the two Fourier transforms $\Phi_1$ and $\Phi_2$ on an interval $I = [-T,T] = [-\Delta \sqrt{n},\Delta \sqrt{n}]$. First, $\Phi_2$ is the following explicit function:
$$\Phi_2(\xi) = \left(1 + \frac{\Lambda_{\omega\expn}'''(h\expn)}{\sqrt{n\,(\Lambda_{\omega\expn}''(h\expn))^3}}\,\frac{(\I \xi)^3}{6}\right)\E^{-\frac{\xi^2}{2}}.$$
Indeed, $s^3-3s$ is the Hermite polynomial $H_3(s)$, and one has the general formula
$$\int_{\R} H_k(s)\,\E^{-\frac{s^2}{2}+\I s\xi}\,\frac{\!\DD{s}}{\sqrt{2\pi}} = (\I \xi)^k\,\E^{-\frac{\xi^2}{2}};$$
see \cite[Chapter 5]{Sze39}. On the other hand, on a small interval $[-\delta \sqrt{n},\delta \sqrt{n}] = I' \subset I$, with $z =\frac{\I \xi}{\sqrt{n\,\Lambda_{\omega\expn}''(h\expn)}}$, we have by Taylor approximation:
\begin{align*}
n \big(\Lambda_{\omega\expn}(z+h\expn) - \Lambda_{\omega\expn}(h\expn)-z\Lambda_{\omega\expn}'(h\expn) \big) &= -\frac{\xi^2}{2} + \frac{\Lambda_{\omega\expn}'''(h\expn) \,(\I \xi)^3}{6\,\sqrt{n\,(\Lambda_{\omega\expn}''(h\expn))^3}} (1+o_\delta(1)); \\ 
\big(\Psi\expn(z+h\expn) - \Psi\expn(h\expn) -z\Psi^{(n),'}(h\expn)\big) &= -\frac{\Psi^{(n),''}(h\expn)\,\xi^2}{2n\,\Lambda_{\omega\expn}''(h\expn)}\,(1+o_\delta(1)).
\end{align*}
Therefore,
$$\Phi_1(\xi) = \left(1 + \frac{\Lambda_{\omega\expn}'''(h\expn)}{\sqrt{n\,(\Lambda_{\omega\expn}''(h\expn))^3}}\,\frac{(\I \xi)^3}{6}\,(1+o_\delta(1))-\frac{\Psi^{(n),''}(h\expn)\,\xi^2}{2n\,\Lambda_{\omega\expn}''(h\expn)}\,(1+o_\delta(1))\right)\E^{-\frac{\xi^2}{2}},$$
and we have
$$ \int_{I'} \left|\frac{\Phi_1(\xi)-\Phi_2(\xi)}{\xi}\right| \DD{\xi} \leq \int_{I'} \left(O_\delta\!\left(\frac{|\xi|}{n}\right) + o_\delta\!\left(\frac{|\xi|^2}{\sqrt{n}}\right)\right)\E^{-\frac{\xi^2}{2}}\DD{\xi} = o_\delta\!\left(\frac{1}{\sqrt{n}}\right),$$
by using the uniform convergence around $h$ of the functions $\Lambda_{\omega\expn}$ and $\Psi\expn$ and their derivatives. Consequently, in order to end the proof of the lemma, it suffices now to control $\Phi_1(\xi)$ and $\Phi_2(\xi)$ over $I \setminus I'$. If we write
$$\Phi_1(\xi) = \underbrace{\E^{n\big(\Lambda_{\omega\expn}(z+h\expn) - \Lambda_{\omega\expn}(h\expn)-z\Lambda_{\omega\expn}'(h\expn) \big)}}_{a(\xi)}\,\,\underbrace{\E^{\big(\Psi\expn(z+h\expn) - \Psi\expn(h\expn) -z\Psi^{(n),'}(h\expn)\big)}}_{b(\xi)},$$
then Lemma \ref{lem:difficult} shows that $a(\xi)$ is bounded over $I \setminus I'$ by $\E^{-n A(\delta)}$ for some constant $A(\delta)>0$. On the other hand, the quantity $b(\xi)$ is bounded over $I$ by some constant $B(\Delta)$. So,
$$\int_{I \setminus I'} \left|\frac{\Phi_1(\xi)}{\xi}\right| \DD{\xi} \leq 2B(\Delta)\E^{-n A(\delta)} \int_{\delta \sqrt{n}}^{\Delta\sqrt{n}} \frac{\DD{\xi}}{\xi} = 2B(\Delta)\,\E^{-n A(\delta)}\,\log\!\left(\frac{\Delta}{\delta}\right).$$
A similar bound exists for the integral of $\left|\frac{\Phi_2(\xi)}{\xi}\right|$, since the Gaussian term $\E^{-n \frac{\xi^2}{2}}$ is bounded from above by $\E^{-n\frac{\delta^2}{2}}$ on $I \setminus I'$. We conclude that for any pair $\delta<\Delta$,
$$\sup_{t \in \R} \left|F\expn(t) - \overline{G}\expn(t)\right| \leq C(\Delta)\,\log\!\left(\frac{\Delta}{\delta}\right)\,\E^{-n A(\delta)} + \frac{24m}{\pi \Delta \sqrt{n}}+ o_\delta\!\left(\frac{1}{\sqrt{n}}\right) $$
for some positive constants $A(\delta)$ and $C(\Delta)$.
Given $\eps>0$, we choose $\delta$ small enough and $\Delta$ large enough so that the two last terms of the right-hand side are smaller than $\frac{\eps}{\sqrt{n}}$ for $n$ large enough. Then, the exponential term goes to $0$ faster than $\frac{1}{\sqrt{n}}$, so it is also smaller than $\frac{\eps}{\sqrt{n}}$ for $n $ large enough. Thus, for any $\eps>0$,
$$\sup_{t \in \R} \left|F\expn(t) - \overline{G}\expn(t)\right| \leq \frac{3\eps}{\sqrt{n}}$$
for $n$ large enough, which means that the distance between the two distributions $F\expn$ and $\overline{G}\expn$ is a $o(n^{-1/2})$. 
\end{proof}

\begin{proof}[Proof of Theorem \ref{main:large_deviations}]
Our proof follows now the same argument as in \cite[Theorem 4.2.1]{FMN16} and \cite[Theorem A]{MN22}, and is inspired by standard techniques from the theory of large deviations, in particular the Bahadur--Rao estimates for sums of non-lattice distributed i.i.d.~random variables \cite{BR60}. We have:
\begin{align*}
&\proba[\majTn - \esper[\majTn] \geq yn^2] \\ 
&= \proba\!\left[X\expn \geq \esper[X\expn] + yn\right] \\ 
&= \esper\left[\E^{h\expn X\expn}\right] \int_{\esper[X\expn] + yn}^\infty \E^{-h\expn x}\,\,\proba\!\left[\tildeX\expn \in (x,x+dx)\right] \\ 
&= \E^{-n (\Lambda_{\omega\expn})^*(y)  +\Psi\expn(h\expn) - h\expn \Psi^{(n),'}(h\expn)} \int_{- \Psi^{(n),'}(h\expn)}^\infty \E^{-h\expn x}\,\,\proba\!\left[\tildeX\expn - \esper[\tildeX\expn] \in (x,x+dx)\right] \\ 
&= \E^{-n (\Lambda_{\omega\expn})^*(y)  +\Psi\expn(h\expn) - h\expn \Psi^{(n),'}(h\expn)} \int_{- \frac{\Psi^{(n),'}(h\expn)}{\sqrt{n\,\Lambda_{\omega\expn}''(h\expn)}}}^\infty \E^{-h\expn \sqrt{n\,\Lambda_{\omega\expn}''(h\expn)}\,x}\,dF\expn(x).
\end{align*}
Let $I\expn$ be the integral on the last line, and $$b\expn=-\frac{\Psi^{(n),'}(h\expn)}{\sqrt{n\,\Lambda_{\omega\expn}''(h\expn)}}$$ its lower bound of integration, which goes to $0$ as $n$ goes to infinity. By using an integration by parts and the estimate from Proposition \ref{prop:technical_large_deviations}, we get
\begin{align*}
 I\expn &= h\expn \sqrt{n\,\Lambda_{\omega\expn}''(h\expn)} \int_{b\expn}^\infty \E^{-h\expn \sqrt{n\,\Lambda_{\omega\expn}''(h\expn)}\,x}\,(F\expn(x)-F\expn(b\expn))\,\DD{x} \\ 
 &=  h\expn \sqrt{n\,\Lambda_{\omega\expn}''(h\expn)} \int_{b\expn}^\infty \E^{-h\expn \sqrt{n\,\Lambda_{\omega\expn}''(h\expn)}\,x}\,(G\expn(x)-G\expn(b\expn))\,\DD{x} \\ &\quad 
 + o\!\left(h\expn \sqrt{\Lambda_{\omega\expn}''(h\expn)} \int_{b\expn}^\infty \E^{-h\expn \sqrt{n\,\Lambda_{\omega\expn}''(h\expn)}\,x}\,\DD{x}\right)\\ 
 &= \int_{b\expn}^\infty \E^{-h\expn \sqrt{n\,\Lambda_{\omega\expn}''(h\expn)}\,x}\,\,dG\expn(x) + o\!\left(\frac{1}{\sqrt{n}}\right).
\end{align*}
In the integral against the (signed) distribution $dG\expn(x)$, the main contribution comes from the Gaussian term and is equal to
\begin{align*}
\int_{b\expn}^\infty \E^{-h\expn \sqrt{n\,\Lambda_{\omega\expn}''(h\expn)}\,x-\frac{x^2}{2}} \,\frac{\!\DD{x}}{\sqrt{2\pi}} &= \E^{\frac{(h\expn)^2 \,n\,\Lambda_{\omega\expn}''(h\expn)}{2}}\int_{b\expn + h\expn \sqrt{n\,\Lambda_{\omega\expn}''(h\expn)}} \E^{-\frac{y^2}{2}}\,\frac{\!\DD{y}}{\sqrt{2\pi}} \\ 
&= \frac{\E^{h\expn\,\Psi^{(n),'}(h\expn) }}{h\expn\,\sqrt{2\pi n\,\Lambda_{\omega\expn}''(h\expn) }}\,(1+o(1));
\end{align*}
the other terms give a contribution of order $O(n^{-1})=o(n^{-1/2})$. Therefore,
$$\proba[\majTn - \esper[\majTn] \geq yn^2]  = \frac{\E^{-n (\Lambda_{\omega\expn})^*(y)  +\Psi\expn(h\expn)} }{h\expn\,\sqrt{2\pi n\,\Lambda_{\omega\expn}''(h\expn)}}(1+o(1)),$$
and we conclude by using the convergence of $h\expn$ towards $h$, of $\Psi\expn$ towards $\Psi_\omega$ and of $\Lambda_{\omega\expn}''$ towards $\Lambda_\omega''$.
\end{proof}

\bigskip
\bigskip

\printbibliography

\end{document}